\numberwithin{equation}{section}
\newtheorem{thm}{Theorem}[section]
\newtheorem{prop}[thm]{Proposition}
\newtheorem{lem}[thm]{Lemma}
\newtheorem{cor}[thm]{Corollary}
\newtheorem{hyp}{Hypothesis}
\theoremstyle{remark}
\newtheorem{rem}{Remark}[section]
\newtheorem{defn}{Definition}
\newcommand{\BBB}{\mathbb}
\newcommand{\R}{{\BBB R}}
\newcommand{\Z}{{\BBB Z}}
\newcommand{\T}{{\BBB T}}
\newcommand{\N}{{\BBB N}}
\newcommand{\C}{{\BBB C}}
\newcommand{\HT}{{\mathcal H}}%
\newcommand{\LR}[1]{{\langle {#1} \rangle }}
\newcommand{\al}{\alpha}
\newcommand{\be}{\beta}
\newcommand{\ga}{\gamma}
\newcommand{\te}{\theta}
\newcommand{\vp}{\varphi}
\newcommand{\e}{\varepsilon}
\newcommand{\ta}{\tau}
\newcommand{\p}{\partial}
\newcommand{\La}{\Lambda}
\newcommand{\de}{\delta}
\newcommand{\om}{\omega}
\newcommand{\supp}{\operatorname{supp}}
\newcommand{\equivalent}{\Leftrightarrow}
\newcommand{\I}{\infty}
\newcommand{\sgn}{\operatorname{sgn}}
\newcommand{\EQS}[1]{\begin{align} #1 \end{align}}
\newcommand{\EQQS}[1]{\begin{align*} #1 \end{align*}}
\newcommand{\F}{\mathcal{F}}
\newcommand{\ti}{\widetilde}
\newcommand{\ha}{\widehat}
\newcommand{\les}{\lesssim}
\newcommand{\gts}{\gtrsim}
\title[gKdV on $\R$]
{Improved refined bilinear estimates and well-posedness for generalized KdV type equations on $\R$}
\author[L. Molinet and T. Tanaka]{Luc Molinet and Tomoyuki Tanaka}
\address[L. Molinet]{Institut Denis Poisson, Universit\'e de Tours, Universit\'e d'Orl\'eans, CNRS, Parc Grandmont, 37200 Tours, France}
\email[L. Molinet]{luc.molinet@univ-tours.fr}
\address[T. Tanaka]{Graduate School of Engineering Science, Yokohama National University, Yokohama, Kanagawa, 240-8501 Japan}
\email[T. Tanaka]{tanaka-tomoyuki-fp@ynu.ac.jp}
\keywords{generalized KdV equation, nonlinear dispersive equation, well-posedness, unconditional uniqueness, energy method}
\begin{document}
\setcounter{page}{001}

\begin{abstract}
In this paper, we study the Cauchy problem for one-dimensional dispersive equations posed on $\mathbb{R} $, under the  hypotheses that the dispersive operator behaves, for high frequencies, as a Fourier multiplier by $ i |\xi|^\alpha \xi $ with $ 1 \le \alpha\le 2 $, and that the nonlinear term is of the form $ \partial_x f(u) $ where $f $ is a real analytic function satisfying certain conditions.
We prove the unconditional local well-posedness of the Cauchy problem in $H^s(\mathbb{R}) $  for $ s\ge \frac{5-2\alpha}{4} $  whenever $ 1\le \alpha<\frac{3}{2} $, and for $ s>\frac{1}{2} $ whenever $\alpha\in [\frac{3}{2},2] $.
This result is optimal in the case $\alpha\ge \frac{3}{2}$ in view of the restriction $ s>\frac{1}{2} $ required for the continuous embedding $ H^s(\mathbb{R}) \hookrightarrow L^\infty(\mathbb{R}) $.
The main novelty of this work, compared to our previous studies, is an improvement of the refined linear and bilinear estimates on $\mathbb{R} $.
Our local well-posedness results enable us to derive global existence of solutions for $ \alpha \in [\frac{5}{4},2] $.
\end{abstract}
\maketitle

\section{Introduction}

We continue our study (\cite{MT22, MT3}) of the Cauchy problem associated with the dispersive equation of the form
\EQS{\label{eq1}
  \p_t u + L_{\al+1} u + \p_x(f(u)) &=0,\quad (t,x)\in \R^2,\\
  \label{initial}
  u(0,x)&=u_0(x),\quad x\in\R,
}
under the two following hypotheses on the dispersive linear operator $L_{\alpha+1}$ (with $\alpha\in [1,2]$) and  on the nonlinear function $ f $.

\begin{hyp}\label{hyp1}
$ L_{\alpha+1} $ is a Fourier multiplier operator with symbol  $  - i p_{\al+1} $, where
$p_{\al+1}\in C^{1}(\R)\cap C^2(\R\backslash\{0\})$ is a real-valued odd function satisfying $p_{\al+1}'(\xi)\sim \xi^\al$ and $ p_{\al+1}''(\xi)\sim \xi^{\al-1}$ for all $\xi\ge \xi_0$,  for some $\xi_0>0$.
\end{hyp}

\begin{hyp}\label{hyp2}
  $f:\R\to\R$ is a real analytic function whose Taylor series around the origin has an infinite radius of convergence.
\end{hyp}

It is worth noticing that any polynomial functions satisfy Hypothesis \ref{hyp2}, and this class of equations includes the well-known generalized Korteweg-de Vries (gKdV) and generalized Benjamin-Ono (gBO) equations, which correspond, respectively, to the cases $ L_3=\p_x^3$ and $L_2=-\HT\p_x^2$, and take the form
\EQQS{
  \p_t u + \p_x^3 u  + \p_x (u^{k+1})=0
}
 and
\EQQS{
  \p_t u -  {\mathcal H} \p_x^2 u  + \p_x (u^{k+1})=0,
}
where $ {\mathcal H} $ denotes the Hilbert transform (i.e., the Fourier multiplier by $ -i \sgn(\xi) $).

\subsection{Previous Results on Well-Posedness}

As the equation \eqref{eq1} encompasses many important models, there exists a substantial body of research on its local well-posedness.
In this subsection, we focus on the following equation, which involves a purely dispersive operator and a polynomial nonlinearity:
\EQS{\label{eq4}
  \p_t u -\p_x D_x^\al u +\p_x (u^{k+1})=0,\quad (t,x)\in\R^2,
}
where $1\le k\in \N$, $\al\in [1,2]$ and $\ha{D}_x = |\xi|$.
Recall that $s_c(\al,k)=\frac{1}{2}-\frac{\al}{k}$ is the scaling critical index for \eqref{eq4}.
We mainly summarize previous results in the case $k\ge 3$.
For the cases $k=1,2$, see \cite{LP} and the references therein.

The first attempts to prove the well-posedness of \eqref{eq4} in $ H^s(\R) $ (see, for instance, \cite{ABFS89, BS75, Kato83}) did not make use of the dispersive effects of the equation and were thus restricted to $s>\frac{3}{2} $.
These methods are collectively referred to as the \textit{classical energy method} and play a fundamental role in well-posedness theory.
A first breakthrough was achieved by Kenig, Ponce, and Vega in a series of major works \cite{KPV3,KPV1,KPV2}, where they initiated the study of \eqref{eq4} with the help of local and global smoothing effects of the associated linear group.
In \cite{KPV1}, they showed that the Cauchy problem for \eqref{eq4} is locally well-posed in $H^{\frac{9-3\al}{4}}(\R)$ for $k\ge 1$ by combining these linear smoothing effects with the so-called Kato smoothing effect for nonlinear solutions.
Later, in \cite{KPV2}, they improved this threshold in the case $\al=2$ by using a contraction argument in suitable function spaces adapted to the linear smoothing effects and the degree of the nonlinearity.
More precisely, they refined the result to $H^{\frac{1}{4}}(\R)$ for $k=2$, $H^{\frac{1}{12}}(\R)$ for $k=3$, and $H^{s_c(2,k)}(\R)$ (with small initial data) for $k\ge 4$.
See also \cite{KPV5} for results concerning the modified and generalized Benjamin-Ono equation.

A second breakthrough was accomplished by Bourgain in his seminal work \cite{B93}, where, among other results, he  studied the KdV equation (corresponding to $(\al,k)=(2,1)$) and established the local well-posedness in both $L^2(\T)$ and $ L^2(\R) $, by employing the contraction argument in the space $X^{s,b}$
(see Subsection \ref{subs_FS} for the definition).
Since then, many authors have refined his approach and applied it to a wide range of dispersive equations; see, for instance, \cite{Gru, KPV4, LP}.
In particular, Gr\"unrock \cite{Gru} showed the local well-posedness for \eqref{eq4} with $(\al,k)=(2,3)$ in $H^{s_c(2,3)+}(\R)$.

However, in \cite{MST},  the first author, Saut, and Tzvetkov showed that \eqref{eq4} with $\alpha <2 $ and $k=1$ cannot be solved in the standard Sobolev space $H^s(\R)$ via a contraction argument.
Therefore, for $ \alpha\in [1,2[ $, it seems appropriate to present the results separately for $k=1$ and $ k\ge 2$.
In the case $k=1$, Herr, Ionescu, Kenig, and Koch \cite{HIKK} proved that \eqref{eq4} is locally well-posed in $L^2(\R) $ for any $ \alpha\in ]1,2]$ by combining a para-differential gauge transform with Bourgain's type estimates.
Recall that for $\alpha=1$ (the Benjamin-Ono equation), the local well-posedness in $L^2(\R) $ had already been established in \cite{IK}.
For the case $ k\ge 2$, the first author and Ribaud \cite{MR} established the local well-posedness of \eqref{eq4} with $ \alpha=1$ (the generalized Benjamin-Ono equation) in $H^s(\R) $ for small initial data: for $ s>\frac{1}{2} $ when $k=2$, $s>\frac{1}{3} $ when $k=3 $, and $ s> s_c(1,k)$ when $ k\ge 4$.
For this purpose, they mainly followed the approach developed for the gKdV equation in \cite{KPV2}.
The restriction to small initial data is due to the weakness of the dispersion, with $\alpha=1 $ being a limiting case.
To overcome this weakness, Vento \cite{Vento} combined the contraction method with the gauge transformation introduced by Tao \cite{Tao04}, and established the local well-posedness in $H^{\frac{1}{3}+}(\R)$ when $k= 3$, and in $H^{s_c(1,k)}(\R)$ when $k\ge 4$, for the Cauchy problem associated with \eqref{eq4} in the case $\al=1$.
More recently, Campos, Linares, and Santos \cite{CLS} studied \eqref{eq4} with $\al\in ]1,2]$ and $k\ge 3$.
They  proved the local well-posedness in $H^{s_c(\al,k)+}(\R)$ for $k\ge 3$.
To this end, they followed the approach of \cite{Gru} for $ k=3 $, and that of \cite{KPV2} for $k\ge 4$.
Note that no smallness assumption on the initial data is required, since the dispersion is slightly above the threshold $\al=1$.

Finally, we notice that in the case of more general linear operators and nonlinearities, the authors \cite{MT22,MT3} established the local well-posedness and the unconditional uniqueness of \eqref{eq1} in $H^{s}(\T)$ for $\alpha\in [1,2] $ and $ s\ge 1-\frac{\al}{4}$ with $s>\frac{1}{2}$, under Hypotheses \ref{hyp1}--\ref{hyp2}.
The proof of these results can be carried out in parallel for the real line case.

\subsection{Main Results}
 The main result of this article is the local well-posedness (LWP, for short) for the Cauchy problem \eqref{eq1}--\eqref{initial}, along with the unconditional uniqueness of its solution.
 Before stating our main result precisely, we recall the notion of solutions used in this paper:

\begin{defn}\label{def}
  Let $s>\frac{1}{2}$ and $T>0$.
  We say that $u\in L^\infty(]0,T[;H^s(\R)) $ is a \textit{solution} to \eqref{eq1} associated with the initial datum $ u_0 \in H^s(\R)$  if
  $ u $ satisfies \eqref{eq1}--\eqref{initial} in the distributional sense, i.e., for any test function $ \phi\in C_c^\infty(]-T,T[\times \R) $,  it holds that
  \begin{equation}\label{weakeq}
  \int_0^\infty \int_{\R} \Bigl[(\phi_t +L_{\alpha+1}\phi )u +  \phi_x f(u) \Bigr] \, dx \, dt +\int_{\R} \phi(0,\cdot) u_0 \, dx =0.
  \end{equation}
\end{defn}

\begin{rem}\label{rem_conti}
 Let $u\in L^\infty(]0,T[;H^s(\R)) $ be a solution in the sense of Definition \ref{def}.
 It then holds that $u\in C_w([0,T];H^s(\R))\cap C([0,T];H^{s-}(\R))$ by using the equation \eqref{weakeq}.
 See Remark 2.1 in \cite{MPV19} for details.
\end{rem}

\begin{defn}\label{def_UWP}
 We say that the Cauchy problem associated with \eqref{eq1} is \textit{unconditionally locally well-posed}  in $ H^s(\R )$ if for any initial data $ u_0\in H^s(\R) $, there exist $ T=T(\|u_0\|_{H^s})>0 $ and a solution
 $ u \in C([0,T]; H^s(\R)) $ to \eqref{eq1} emanating from $ u_0 $.
 Moreover, $ u $ is the unique solution to  \eqref{eq1} associated with $ u_0 $ that belongs to $ L^\infty(]0,T[; H^s(\R) )$.
 Finally, for any $ R>0$, the solution-map $ u_0 \mapsto u $ is continuous from the ball in $ H^s(\R) $  with radius $ R $ centered at the origin into $C([0,T(R)]; H^s( \R)) $.
\end{defn}

The uniqueness of solutions in the class $L^\infty(]0,T[; H^s(\R))$ is called \textit{unconditional uniqueness} (UU for short), meaning that the solution is independent of the method of construction.
This notion was first introduced by Kato \cite{Kato95} in the study of the Schr\"odinger equation.
Several UU results are known for KdV-type equations.
Zhou \cite{Zhou} proved UU for the KdV equation in $L^2(\R)$, and Kwon, Oh, and Yoon \cite{KOY20} established UU for the modified KdV equation in $H^{\frac{1}{4}+}(\R)$.
For the equation \eqref{eq1} under Hypotheses \ref{hyp1}--\ref{hyp2}, the authors \cite{MT3} proved UU in $H^{s}(\R)$ for $s\ge 1-\frac{\al}{4}$ with $s>\frac{1}{2}$ and $\al\in [1,2]$.
See also \cite{BIT11, MPV25, MT22, MT24, MP23, P21} for further results on Benjamin-Ono type equations and related models.

We now state our main result.
First we introduce the following index:
 \EQS{\label{def_salpha}
   s(\al)=
   \begin{cases}
     \frac{5-2\al}{4},&1\le \al<\frac{3}{2}\\
     \frac{1}{2},&\frac{3}{2}\le \al \le 2.
   \end{cases}
 }

\begin{thm}\label{theo1}
Let $\alpha \in [1,2]$.
Then, for any $s\ge s(\al)$ with $s>\frac{1}{2}$, the Cauchy problem associated with \eqref{eq1}--\eqref{initial} is unconditionally locally well-posed in $ H^s(\R ) $ with a maximal time of existence $T\ge g(\|u_0\|_{H^{s(\al)\vee(\frac{1}{2}+)}})>0 $, where $ g$ is a smooth decreasing function.
\end{thm}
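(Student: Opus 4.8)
The plan is to establish the theorem via an energy method combined with the refined linear and bilinear estimates announced in the abstract. First I would reduce matters to the critical regularity: by persistence of regularity it suffices to prove existence, uniqueness, and continuous dependence at $s = s(\al)\vee(\tfrac12+)$, since for larger $s$ the same a priori bounds propagate the higher norm on the same time interval. To construct solutions I would regularize the data, say $u_{0,n} = P_{\le n} u_0$, solve the truncated (or mollified) equations by the classical energy method in $H^\infty(\R)$ on a common time interval, and then derive uniform-in-$n$ bounds on $\|u_n\|_{L^\infty_T H^s}$. The heart of the matter is this a priori estimate: differentiating $\tfrac12\frac{d}{dt}\|J^s u_n\|_{L^2}^2$ produces the commutator term $\int J^s(\p_x f(u_n)) J^s u_n\,dx$, and after integration by parts and symmetrization one must control contributions of the form $\sum_k \int [J^s, f'(u)]\p_x u \cdot J^s u$; the key gain comes from exploiting the dispersion of $L_{\al+1}$ — i.e. the resonance relation on frequency interactions — via the refined short-time bilinear estimates (the improvement over \cite{MT22,MT3}), which upgrades the $X^{s,b}$-type bilinear bounds on $\R$ so that the loss from the derivative in $\p_x f(u)$ is absorbed exactly down to $s = s(\al)$ when $\al<\tfrac32$, and down to $s>\tfrac12$ (the $L^\infty$ embedding threshold) when $\al\ge\tfrac32$.

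Next I would pass to the limit: the uniform bounds give weak-$*$ convergence in $L^\infty_T H^s$ along a subsequence, and I would upgrade to strong convergence in $C_T H^{s'}$ for $s'<s$ by proving that $(u_n)$ is Cauchy in a lower-regularity norm, using the bilinear estimate applied to differences $u_n - u_m$. Since $s > \tfrac12$, the nonlinearity $\p_x f(u_n) \to \p_x f(u)$ in the distributional sense (using the analyticity of $f$ from Hypothesis~\ref{hyp2} together with the algebra property of $H^{s'}$ for $s' > \tfrac12$ and the infinite radius of convergence of the Taylor series to sum the series term by term), so the limit $u$ satisfies \eqref{weakeq}. Then Remark~\ref{rem_conti} gives $u\in C_w([0,T];H^s)\cap C([0,T];H^{s-})$, and time-continuity in the strong $H^s$ norm follows from the Bona–Smith argument: split $u_0 = P_{\le N}u_0 + P_{> N}u_0$, use the a priori bound to control the high-frequency part uniformly and the equation to control the low-frequency part, obtaining $\limsup_{t\to 0}\|u(t)\|_{H^s}\le\|u_0\|_{H^s}$, which with weak continuity yields norm continuity.

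For uniqueness in the class $L^\infty_T H^s$ — i.e. unconditional uniqueness — I would take two solutions $u,v$ with the same data, set $w=u-v$, and run an energy estimate on $\|w\|_{L^2}$ (or a slightly negative Sobolev norm), where the difference of nonlinearities is handled by writing $f(u)-f(v) = w\int_0^1 f'(v+\theta w)\,d\theta$ and invoking the same refined bilinear estimate, now at the level of the bare $L^\infty_T H^s$ information without any auxiliary Bourgain norm; this is precisely the point of the "unconditional" refinement and is where the threshold $s\ge s(\al)$ is sharp for $\al<\tfrac32$. Continuous dependence on the data then follows by combining the Bona–Smith splitting with the difference estimate: for data $u_{0,n}\to u_0$ in $H^s$, one controls $\|u_n - u\|_{C_T H^{s-}}$ by the difference estimate and $\|u_n-u\|_{C_T H^s}$ via the uniform bounds and the frequency split, yielding convergence in $C_T H^s$. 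The main obstacle throughout is the sharp a priori energy estimate at $s=s(\al)$: everything hinges on the improved refined bilinear estimate on $\R$ being strong enough to close the energy inequality at exactly this regularity — in particular handling the worst frequency interaction (high $\times$ high $\to$ low, and the derivative-loss term) without the extra room one has on $\T$, where summation over the discrete lattice helps; getting the $\R$-version to the same (or better) threshold is the technical crux.
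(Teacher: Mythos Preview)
Your outline is correct and matches the paper's approach: Theorem~\ref{theo1} is indeed reduced to standard energy-method machinery (regularization, a priori bounds from Proposition~\ref{prop_apri1}, difference estimates at the $H^{s-1}$ level from Proposition~\ref{prop_diff1}, frequency-envelope/Bona--Smith for continuity), with all the real work concentrated in the improved refined bilinear estimates of Section~\ref{sec_boot}.

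Two corrections are worth making, though. First, your closing remark has the geometry backwards: the improvement over \cite{MT3} is possible precisely \emph{because} we are on $\R$ rather than $\T$. The key recursive bootstrap (Proposition~\ref{prop_bistri3}), which extends the admissible short-time-interval length from $|I|\sim N^{-1}$ to $|I|\sim N^{-\gamma}$ with $\gamma=\max\{2-\alpha,\,1-\alpha/3\}$, relies on the genuine smoothing in the continuous bilinear estimate \eqref{bil2}; on the torus this gain is unavailable and one is stuck at $s\ge 1-\alpha/4$. Second, the claim that the difference estimate runs ``without any auxiliary Bourgain norm'' is misleading. The working space is $Z^s = L^\infty_t H^s \cap X^{s-1,1}$, and Bourgain-type estimates are explicitly invoked for the nonresonant interactions in both Propositions~\ref{prop_apri1} and~\ref{prop_diff1}. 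Unconditional uniqueness holds not because $X^{s,b}$ norms are avoided, but because any solution $u\in L^\infty_T H^s$ with $s>\tfrac12$ automatically lies in $X^{s-1,1}_T$ --- one uses the equation together with \eqref{eq2.3ana} to control $\|\partial_t u + L_{\alpha+1}u\|_{L^\infty_T H^{s-1}}$, which bounds the $X^{s-1,1}_T$ norm.
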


\begin{rem}
For $\alpha\in ]\frac32,2] $, we can gain a negative power of the highest frequency in all frequency interactions, either by using Bourgain's type estimates, the refined bilinear estimate for separated frequency functions \eqref{eq_bistri2}, or the improved refined bilinear estimate for non-separated frequency functions \eqref{eq_bistri1}.
As a consequence, it is clear that our approach would lead to the unconditional local well-posedness of \eqref{eq1} in $H^{\frac12}(\R) $ when  $ f$ is a polynomial function.
\end{rem}

Next, we discuss global solutions to \eqref{eq1}.
Equation \eqref{eq1} possesses the following conservation laws at the $ L^2$-
and $ H^{\frac{\al}{2}} $-levels:
\EQQS{
M(u)=\int_{\R} u^2 dx \quad \text{and} \quad
E(u)=\frac{1}{2} \int_{\R} u  \partial_x^{-1} L_{\alpha+1} u \, dx
      +\int_{\R} F(u) dx,
}
where $ \partial_x^{-1} L_{\alpha+1} $ is the Fourier multiplier by  $ -\frac{p_{\alpha+1}(\xi)}{\xi}  $,
and
\begin{equation}
  \label{defF} F(x) :=\int_0^x f(y) \, dy \; .
\end{equation}
At this stage, it is worth noticing that Hypothesis \ref{hyp1} ensures that $\xi \mapsto -\frac{p_{\alpha+1}(\xi)}{\xi}$ can be continuously extended to the origin.
Therefore, the restriction of the quadratic part of the energy $ E $ to high frequencies behaves like the $ H^{\frac{\al}{2}}(\R)$-norm, whereas its restriction to low frequencies can be  controlled by the $ L^2$-norm.
Combining these conservation laws with the local well-posedness result above, exactly the same arguments as in [\cite{MT22}, Section 5] lead to the following global well-posedness (GWP) results for \eqref{eq1} when $\al\in[\frac{5}{4},2]$, since $s(\al)\le \frac{\al}{2}\equivalent \al\ge \frac{5}{4}$:

\begin{cor}[GWP for small data] \label{theo2}
 Assume that Hypotheses \ref{hyp1}--\ref{hyp2} hold with $ \alpha \in [\frac{5}{4},2]$.
 Then there exists a constant $ A=A(L_{\alpha+1},f) >0 $ such that, for any initial data $ u_0\in H^s(\R) $ with $ s\ge \frac{\al}{2} $ satisfying $ \|u_0\|_{H^{\frac{\al}{2}}} \le A $, the solution constructed in Theorem \ref{theo1} extends globally in time.
 Moreover, its trajectory is bounded in $ H^{\frac{\al}{2}}(\R) $.
\end{cor}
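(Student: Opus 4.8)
The plan is to follow the strategy of [\cite{MT22}, Section 5], using Theorem \ref{theo1} as the local well-posedness input, and to combine three ingredients. First, $s(\al)\le\frac\al2$ precisely when $\al\ge\frac54$, and $\frac\al2>\frac12$ throughout that range, so Theorem \ref{theo1} applies with $s=\frac\al2$ and yields, for any $u_0\in H^{\al/2}(\R)$, a unique solution $u\in C([0,T_{\max});H^{\al/2}(\R))$ with maximal existence time $T_{\max}\ge g(\|u_0\|_{H^{s(\al)\vee(\frac12+)}})$. Since $\|\cdot\|_{H^{s(\al)\vee(\frac12+)}}\les\|\cdot\|_{H^{\al/2}}$ here and $g$ is decreasing, the Cauchy problem can be restarted from any $t_0<T_{\max}$ over a time interval whose length is bounded below in terms of $\|u(t_0)\|_{H^{\al/2}}$; hence a uniform bound on $\|u(t)\|_{H^{\al/2}}$ over $[0,T_{\max})$ forces $T_{\max}=+\I$. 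Second, along this solution the mass $M(u)$ and the energy $E(u)$ are conserved, which is justified by a standard regularization argument exactly as in [\cite{MT22}, Section 5]. Third, by Hypothesis \ref{hyp1} (as noted before the statement), the quadratic part $Q(u):=\frac12\int_\R u\,\p_x^{-1}L_{\al+1}u\,dx$ of the energy satisfies both $|Q(u)|\les\|u\|_{H^{\al/2}}^2$ and $\|u\|_{H^{\al/2}}^2\les M(u)+|Q(u)|$.

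Granting these, the heart of the proof is an a priori bound on $\|u(t)\|_{H^{\al/2}}$. Since $\p_x f(u)$ is unchanged under $f\mapsto f-f(0)$, we may assume $f(0)=0$, so that $F$ is entire of the form $F(x)=\sum_{n\ge2}b_n x^n$, hence $F(x)=O(x^2)$ near the origin (this is what makes $E$ well defined on $H^{\al/2}(\R)$). Combining the conservation of $M$ and $E$ with the coercivity of $Q$ gives, for every $t\in[0,T_{\max})$,
\[
 \|u(t)\|_{H^{\al/2}}^2 \;\les\; M(u_0)+|E(u_0)| + \Bigl| b_2\,M(u_0) + \sum_{n\ge3} b_n \int_\R u(t)^n\,dx \Bigr| .
\]
To estimate the last sum I would interpolate $L^n(\R)$ between $L^2(\R)$ and the endpoint Gagliardo--Nirenberg inequality $\|v\|_{L^\I}\les\|v\|_{L^2}^{1-\frac1\al}\|v\|_{\Hd^{\al/2}}^{\frac1\al}$ (valid since $\frac\al2>\frac12$), which produces $\|v\|_{L^n}^n\les C^n\,\|v\|_{L^2}^{\,n-\frac{n-2}\al}\,\|v\|_{\Hd^{\al/2}}^{\frac{n-2}\al}$ with $C$ independent of $n$; the Cauchy estimates for the entire function $F$ then make the series $\sum_{n\ge3}$ converge locally uniformly. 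Since $n-\frac{n-2}\al=2+(n-2)(1-\frac1\al)\ge2$ for $n\ge3$, $\al\ge1$, and since $\|u(t)\|_{L^2}^2=M(u_0)\le A^2\le1$, one obtains $\bigl|\sum_{n\ge3}b_n\int_\R u(t)^n\,dx\bigr|\les A^2\,G(\|u(t)\|_{H^{\al/2}})$, where $G$ is continuous with $G(0)=0$ and $G(r)\le K_*\,r^{1/\al}$ on any fixed bounded range of $r$. Estimating $|E(u_0)|\le|Q(u_0)|+|\int_\R F(u_0)\,dx|\les A^2$ in the same manner, we conclude that, as long as $\|u(t)\|_{H^{\al/2}}$ stays in a fixed bounded range $[0,\rho_*]$,
\[
 \|u(t)\|_{H^{\al/2}}^2 \;\le\; C_*\,A^2\bigl(1+\|u(t)\|_{H^{\al/2}}^{1/\al}\bigr),
\]
with $C_*$ and $\rho_*$ depending only on $L_{\al+1}$ and $f$.

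Since $\frac1\al\le1<2$, the exponent on the right is subquadratic, so a standard continuity argument closes the estimate: as $\|u_0\|_{H^{\al/2}}\le A$ and $u\in C([0,T_{\max});H^{\al/2}(\R))$, choosing $A=A(L_{\al+1},f)>0$ small enough prevents $\|u(t)\|_{H^{\al/2}}$ from ever reaching $\rho_*$, so the last displayed inequality holds throughout $[0,T_{\max})$ and yields $\|u(t)\|_{H^{\al/2}}\le C_*^{1/2}(1+\rho_*^{1/\al})^{1/2}A$ there. By the first ingredient, this uniform bound forces $T_{\max}=+\I$; thus the solution is global and its trajectory is bounded in $H^{\al/2}(\R)$. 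For data $u_0\in H^s(\R)$ with $s>\frac\al2$ satisfying $\|u_0\|_{H^{\al/2}}\le A$, the corresponding $H^s$-solution coincides with the above one by uniqueness and is likewise global, because its time of existence in Theorem \ref{theo1} is governed by the lower norm $\|\cdot\|_{H^{s(\al)\vee(\frac12+)}}\les\|\cdot\|_{H^{\al/2}}$, which remains bounded. This proves the corollary.

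I expect the only genuinely delicate point to be the nonlinear estimate on $\int_\R F(u)\,dx$. The energy density is in general merely quadratic at the origin (e.g.\ when $f'(0)\neq0$), so its quadratic part $b_2\int_\R u^2\,dx=b_2 M(u)$ has to be absorbed into the conserved mass; and although the remainder $\sum_{n\ge3}b_n\int_\R u^n\,dx$ is at least cubic in $u$, on $\R$ it carries the homogeneous norm $\|u\|_{\Hd^{\al/2}}$ with exponent $\frac{n-2}\al$, which for $n=3$ and $\al>1$ is strictly below $1$. Hence the bound is \emph{not} superquadratic in $\|u\|_{H^{\al/2}}$ alone, and closing the argument genuinely relies on the smallness of $\|u\|_{L^2}$ furnished by mass conservation together with the small-data hypothesis. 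The conservation of $M$ and $E$ at this low regularity and the coercivity of $Q$ are handled exactly as in [\cite{MT22}, Section 5] and via Hypothesis \ref{hyp1}, respectively.
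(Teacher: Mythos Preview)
Your proposal is correct and follows essentially the same approach as the paper, which does not give its own proof but defers entirely to the argument in \cite{MT22}, Section~5; you reproduce that strategy faithfully, with the key inputs being the local theory from Theorem~\ref{theo1} (applicable since $s(\al)\le\frac\al2$ for $\al\ge\frac54$), conservation of $M$ and $E$, the coercivity of the quadratic part of $E$ noted just before the statement, and a Gagliardo--Nirenberg control of $\int F(u)$ combined with a continuity argument.
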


\begin{cor}[GWP for arbitrarily large data] \label{theo3}
 Assume that Hypotheses \ref{hyp1}--\ref{hyp2} hold with
 $ \alpha \in [\frac{5}{4},2]$.
 Then the solution constructed in Theorem \ref{theo1} extends globally in time if the function $F$ defined
in \eqref{defF}  satisfies one of the following conditions:
 \begin{enumerate}
 \item There exists $C>0  $ such that $|F(x)|\le C (1+|x|^{p+1})$ for some $ 0<p<2\alpha+1  $.
\item   There exists $ B>0  $ such that $ F(x) \le B , \; \forall x\in \R
$.
 \end{enumerate}
 Moreover, its trajectory is bounded in $ H^{\frac{\al}{2}}(\R) $.
\end{cor}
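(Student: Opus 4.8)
The plan is to reduce the global existence to an a priori bound, uniform in time, on $\|u(t)\|_{H^{\alpha/2}(\R)}$, and to extract that bound from the two conservation laws. I would work with data in the energy space, $u_0\in H^{\alpha/2}(\R)$ (for $s>\tfrac\alpha2$ the conclusion then follows by persistence of regularity). Since $\alpha\ge\tfrac54$ one has $\tfrac\alpha2\ge s(\alpha)$ and $\tfrac\alpha2>\tfrac12$, hence $H^{\alpha/2}(\R)\hookrightarrow H^{s(\alpha)\vee(\frac12+)}(\R)$, so Theorem~\ref{theo1} applies to $u_0$ and produces a maximal solution $u\in C([0,T_{\max});H^{\alpha/2}(\R))$, unique in $L^\infty_tH^{\alpha/2}$, with the property that the local existence time from any datum of $H^{\alpha/2}$-norm $\le R$ is $\ge g(R)>0$ ($g$ decreasing). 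By the blow-up alternative it therefore suffices to show $\sup_{[0,T_{\max})}\|u(t)\|_{H^{\alpha/2}}<\infty$: if this holds with value $R$, restarting the Cauchy problem from times close to $T_{\max}$ over a step $\ge g(R)$ independent of the base time contradicts maximality, so $T_{\max}=+\infty$. As $\|u(t)\|_{L^2}=\|u_0\|_{L^2}$ is conserved, only $\|u(t)\|_{\dot H^{\alpha/2}}$ must be controlled.

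Next I would use Hypothesis~\ref{hyp1}: $|{-p_{\alpha+1}(\xi)/\xi}|\sim|\xi|^\alpha$ for $|\xi|\ge\xi_0$ and $|{-p_{\alpha+1}(\xi)/\xi}|\lesssim1$ for $|\xi|\le\xi_0$, so that, up to a multiple of the conserved quantity $\|u\|_{L^2}^2$, the quadratic part of $E$ is comparable to $\|u\|_{\dot H^{\alpha/2}}^2$. Combining this with the conservation of $M$ and $E$ (valid along $H^{\alpha/2}$-solutions exactly as in \cite{MT22}), one obtains a constant $C_0=C_0(L_{\alpha+1},\|u_0\|_{L^2},|E(u_0)|)$ with
\[
 \|u(t)\|_{\dot H^{\alpha/2}}^2\;\le\;C_0+C_0\int_{\R}F(u(t))\,dx\qquad\text{for all }t\in[0,T_{\max}) .
\]
Thus everything reduces to bounding the potential energy $\int_\R F(u(t))\,dx$ from above, uniformly in $t$; the part of this integral coming from $\{\,|u(t)|\le1\,\}$ is controlled by the conserved $L^2$-norm and folded into $C_0$, as in \cite{MT22}.

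Under assumption~(1), $|F(y)|\le C(1+|y|^{p+1})$ with $0<p<2\alpha+1$, the remaining part of the potential energy satisfies $\int_{\{|u(t)|>1\}}F(u(t))\,dx\lesssim\|u(t)\|_{L^{p+1}}^{p+1}$ (the case $p\le1$ being even simpler, as the right-hand side is then $\lesssim\|u_0\|_{L^2}^2$, so I assume $p>1$). I would then invoke the Gagliardo--Nirenberg inequality on $\R$, $\|v\|_{L^{p+1}}\lesssim\|v\|_{\dot H^{\alpha/2}}^{\theta}\|v\|_{L^2}^{1-\theta}$ with $\theta=\tfrac{p-1}{\alpha(p+1)}\in(0,1)$, whence $\|v\|_{L^{p+1}}^{p+1}\lesssim\|v\|_{\dot H^{\alpha/2}}^{(p-1)/\alpha}\|v\|_{L^2}^{(1-\theta)(p+1)}$. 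Since $\tfrac{p-1}{\alpha}<2$ is exactly the hypothesis $p<2\alpha+1$, Young's inequality absorbs this quantity into $\tfrac12\|v\|_{\dot H^{\alpha/2}}^2$ at the cost of a constant depending only on $\|u_0\|_{L^2}$; taking $v=u(t)$ in the displayed inequality and absorbing the $\dot H^{\alpha/2}$-term yields a uniform-in-$t$ bound on $\|u(t)\|_{\dot H^{\alpha/2}}$.

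Under assumption~(2), $F(y)\le B$ for all $y$ (so $B\ge F(0)=0$), I would note that on $\{|u(t)|>1\}$ one has $F(u(t))\le B$ while, by Chebyshev's inequality, $|\{|u(t)|>1\}|\le\|u(t)\|_{L^2}^2=\|u_0\|_{L^2}^2$; hence $\int_{\{|u(t)|>1\}}F(u(t))\,dx\le B\|u_0\|_{L^2}^2$, and the displayed inequality again gives a uniform bound on $\|u(t)\|_{\dot H^{\alpha/2}}$. In either case $\sup_{t}\|u(t)\|_{H^{\alpha/2}}<\infty$, so $T_{\max}=+\infty$ and the trajectory is bounded in $H^{\alpha/2}(\R)$, proving the corollary. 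I expect the only real difficulty to be the absorption step that closes the a priori estimate, which is also the place where the hypotheses on $F$ are genuinely used: in case~(1) the threshold $p<2\alpha+1$ is exactly what makes the nonlinearity subcritical relative to $\dot H^{\alpha/2}$ (Gagliardo--Nirenberg exponent $<2$), and in case~(2), having no lower bound on $F$, one must close the estimate using only $L^2$-conservation together with Chebyshev's inequality confining the non-small part of the potential energy to a set of $\|u_0\|_{L^2}$-bounded measure; everything else is routine.
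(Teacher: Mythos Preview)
Your proposal is correct and follows essentially the same approach as the paper. The paper itself gives no proof here but refers to \cite{MT22}, Section~5, where this exact argument---reduction to an $H^{\alpha/2}$ a priori bound via conservation of $M$ and $E$, followed by control of $\int F(u)\,dx$ through Gagliardo--Nirenberg and Young's inequality in case~(1) and through the upper bound $F\le B$ together with Chebyshev in case~(2)---is carried out.
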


For typical examples of nonlinearities satisfying the assumptions of Corollary \ref{theo3}, see Remark 1.6 in \cite{MT22}.

\subsection{Outline of The Proof}\label{sub13}
Let us outline the main ideas behind the proof of our result.
First, it is worth recalling that, given the generality of our nonlinear term, we cannot expect to get the LWP below $ H^{\frac12+}(\R) $ for \eqref{eq1}.
Moreover, according to \cite{MST}, the solution cannot be obtained by a fixed point argument, and thus it is natural to try to apply an energy method.
When applying the energy method, in \cite{MT3}, the authors divided the contribution of the nonlinear term in the energy estimate into several types of nonlinear interactions, which can be tackled either by Bourgain's type estimates (based on nonresonant relations) or refined bilinear estimates for separated and non-separated frequency functions
(see \eqref{eq_bistri2} for the case of separated frequency functions).
Recall that these refined bilinear estimates are derived not for free solutions of the linear group associated with \eqref{eq1}, but rather for solutions of \eqref{eq1} itself.
It turns out that for any $\alpha\in [1,2] $, Bourgain's type estimates and refined bilinear estimates for \textit{separated} frequency functions enable us to control the corresponding interactions at the $ H^{\frac12+} $-level, even in the periodic setting (cf.\ \cite{MT3}).
Therefore, both on the torus as well as on the real line, to lower the LWP index to $\frac12 + $, we have to improve the treatment of bad interactions for which the refined bilinear estimate for \textit{non-separated} frequency functions is used.
These interactions mainly consist of three high frequency components of order $ N\gg 1 $, two with the same sign and the other with the opposite sign, together with other frequencies that are negligible with respect to $N$.
Note that such interactions may contain resonances.

In this paper, we show how this refined bilinear estimate can be improved in the real line case, thereby enhancing the LWP index in Sobolev spaces.
Let us now explain our improvement.
The idea underlying the refined bilinear estimates for solutions of nonlinear equations goes back to Koch-Tzvetkov \cite{KT1} (see also \cite{BGT1} and \cite{H12} for the same type of approach applied to linear solutions on a compact manifold).
To fix ideas, we start with the well-known bilinear estimates for solutions of the linearized equation associated with \eqref{eq1}.
For $ N_1\ge N_2$, it holds that
 \begin{equation}\label{tata1}
     \|P_{N_1}U_\alpha(t) \varphi_1 P_{N_2}U_\alpha(t) \varphi_2\|_{L^2(I;L^2)}\lesssim N_1^{\frac{1-\alpha}{4}} |I|^{\frac14} \|P_{N_1}\varphi_1\|_{L_x^2}
      \|P_{N_2}\varphi_2\|_{L_x^2},
 \end{equation}
where $ U_\alpha(t) $ is the linear group associated with \eqref{eq1},  $ I$ is a time interval of length $|I|\lesssim 1$, and $ P_N $ is a smooth Fourier projector on spatial frequencies of size $ N $.
Now, a nonlinear solution $u $ to \eqref{eq1} can be written in the Duhamel form as
\begin{equation}\label{tata2}
    P_N u(t) =P_N U_\alpha(t) u_0 + \int_0^t P_N U_\alpha (t-t') \partial_x f(u(t')) \, dt' \; .
\end{equation}
A direct use of \eqref{tata1} to estimate terms like $\|P_{N_1} u P_{N_2}  u \|_{L^2(]0,1[;L^2)} $ in \eqref{tata2} would not be efficient, since the loss of one derivative in the nonlinear term would be too costly.
The idea introduced in \cite{KT1} consists, roughly speaking, in equalizing the contributions of the free and Duhamel parts in \eqref{tata2} by applying the Strichartz estimate on small time intervals of length $ |I| \sim N^{-\gamma} $ with $ \gamma>0$.
Indeed, the loss of one derivative in the nonlinear term can then be compensated by the time integrability of the Duhamel term, which is achieved by gaining a positive power of $ |I|$ via the H\"older inequality in time.
This is particularly efficient when working in an algebra such as $ H^{\frac12+}(\R)$, where the nonlinearity does not cost.
After balancing the two contributions, which corresponds to choosing $\gamma$ so that
\begin{equation}\label{tata5}
N^s \left\|\int_I P_N U_\alpha (-t') \partial_x f(u(t')) \, dt' \right\|_{L^2_x}\lesssim C(\|u\|_{L^\infty_I H_x^s}) \, ,
\end{equation}
we then sum over the time intervals (there are about $ N^\gamma$ such subintervals, contributing roughly $ N^{\frac{\gamma}{2}}$ in total) to get in view of \eqref{tata1},
\begin{equation}\label{tata3}
     \|P_{N_1} u_1 P_{N_2}u_2\|_{L^2(]0,1[;L^2)}
     \lesssim N_1^{\frac{1-\alpha}{4}} N_1^{\frac{\gamma}{4}}  N_1^{-s}N_2^{-s}
     C\left(\sum_{j=1}^2 \|u_j\|_{L^\infty(]0,1[;H^s)}\right),
 \end{equation}
where $ u_1$ and $ u_2 $ are solutions of \eqref{tata2} and $ f$ is assumed to be a polynomial.
Thus, to optimize this estimate, we need to choose $ \gamma\ge 0 $ as small as possible; in other words, we aim to work on the largest possible time intervals.
In \cite{MT3}, the authors took $\gamma=1$, i.e., $ |I|\sim N^{-1} $, in order to get \eqref{tata5}.
In this paper, we prove that on the real line, for $ \alpha\in [1,2]$, we can push this to $\gamma=\max\{2-\alpha, 1-\alpha/3\} $, which results in an improvement for $\alpha\in ]1,2]$.

\begin{rem}
It is worth noticing that for $ \alpha>3/2$, this yields $ \gamma=1-\al/3$, and thus \eqref{tata3} becomes
\begin{align}\label{tata4}
 \begin{aligned}
     \|P_{N_1} u_1 P_{N_2}u_2\|_{L^2(]0,1[;L^2)}
     &\lesssim N_1^{2-\frac43 \alpha}
        N_1^{-s} N_2^{-s}
        C\left(\sum_{j=1}^2
        \|u_j\|_{L^\infty(]0,1[;H^s)}\right) \\
     &\lesssim
         N_1^{-s} N_2^{-s} C\left(\sum_{j=1}^2
         \|u_j\|_{L^\infty(]0,1[;H^s)}\right),
 \end{aligned}
\end{align}
which enables us to treat bad interactions at the $ H^{\frac12+}(\R)$-level.
Indeed, for simplicity, let us take $f(u)=u^k $ with $ k\ge 3$ (bad interactions do appear only for $ k\ge 3$).
After integrating the nonlinear term against $N^{2s} P_N u $, the contribution of the bad interactions in the energy estimate takes the form
\EQQS{
  N^{2s+1} \left|\int_0^t \int_{\R} (P_{\sim N} u)^4 (P_{\ll N} u)^{k-3}
   dxdt'\right|,
}
which can be bounded for $u\in L^\infty_t H^{s}(\R)$ by applying \eqref{tata4} twice, provided that $ 2s>1$, i.e., $ s>1/2$.
\end{rem}

To lower the value of $\gamma$, we have to prove \eqref{tata5} on a time interval $I $ of length greater than $ N^{-1} $.
For simplicity, let us again take $ f(u)=u^k $.
Localizing each function in frequency, we decompose the nonlinear term
$\partial_x P_N u^k $ into terms of the form
\EQQS{
  \partial_x P_N (P_{N_1} u P_{N_2} u \cdots P_{N_k}u),
}
where $ N_1\ge N_2\ge \cdots\ge N_k $.
We divide the analysis into two regions: $ N_2\gtrsim N $ (the non-separated frequency case) and $N_2\ll N $ (the separated frequency case).
It turns out that the limitation $ |I|\sim N^{-1}$ in \cite{MT3} originated from the second region.
Indeed, let us consider a short time interval $I$ with $|I|\sim N^{-\ga}$ for some $\ga>0$.
In the first region $N_2\gtrsim N$, we can use the $ L^4_t L^\infty_x $-Strichartz estimate (see \eqref{stri_2}), together with the H\"older inequality in time, and take advantage of the Sobolev regularity $ s>\frac{1}{2}$ to get
\begin{align}
\label{tata6}
  \begin{aligned}
      &N^s \left\|\int_I  U_\al(-t')
        \partial_x P_N f(u(t')) dt'  \right\|_{L_x^2}  \\
      &\lesssim N^{s+1} N^{\frac{1-\alpha}{4}}
        \|P_{N_1} u P_{N_2} u \cdots
        P_{N_k}u\|_{L^\frac43_I L^1_x} \\
      &\lesssim N^{s+1} N^{\frac{1-\alpha}{4}}
       |I|^{\frac{3}{4}}
       \|P_{N_1} u\|_{L^\infty_T L^2_x}
       \|P_{N_2} u\|_{L^\infty_T L^2_x}
       \prod_{j=3}^k \|P_{N_j}u\|_{L^\infty_{T, x}} \\
      &\lesssim
        \underbrace{N N^{\frac{1-\alpha}{4}} N^{-\frac{3}{4}\gamma} N^{-\frac12}}_{=N^\frac{3-\alpha-3\gamma}{4}}
       \|P_{N_1} u\|_{L^\infty_T H^s_x}
       \|P_{N_2} u\|_{L^\infty_T H^{\frac12}_x}
       \prod_{j=3}^k
       \|P_{N_j}u\|_{L^\infty_{T}H^{\frac12+}_x} \, .
  \end{aligned}
\end{align}
This is acceptable as long as $ \gamma\ge 1-\alpha/3 $.
Now, in the second region $N_2\ll N$ (which implies $N_1\sim N$), if we proceed in a similar way, we cannot obtain a better bound than the following one:
\begin{align}
\label{tata66}
  \begin{aligned}
      N^s \left\|\int_I  U_\al(-t') \partial_x P_N f(u(t')) dt'  \right\|_{L_x^2} & \lesssim N^{s+1}  \|P_{N_1} u P_{N_2} u \cdot\cdot\cdot P_{N_k}u\|_{L^1_I L^2_x}\\
      & \lesssim  N^{1-\gamma}
      \|P_{N_1} u\|_{L^\infty_T H^s_x}
      \prod_{j=2}^k \|P_{N_j}u\|_{L^\infty_{T}H^{\frac12+}_x} \, .
  \end{aligned}
\end{align}
This leads to the restriction $ \gamma\ge 1$.
Therefore, we need to improve this latter estimate.
For this, we slightly modify the above decomposition of $ u^k$ by writing
\begin{align*}
\partial_x P_N&(u^k)
      =k\partial_x P_{\sim N}(P_{N}u (P_{\ll N}u)^{k-1})
       +k\partial_x P_{\sim N}(P_{N}uP_{\gtrsim N }u(P_{\ll N}u)^{k-2})\\
      &\quad+k\partial_x P_{\sim N}([P_N,(P_{\ll N}u)^{k-1}]P_{\sim N}u)
       +\sum_{j=2}^k\binom{k}{j}
       \partial_x P_N((P_{\gtrsim N }u)^j (P_{\ll N}u)^{k-j}).
\end{align*}
The contribution of the first term in the right-hand side gives rise to a term of the form $\|P_N u P_{\ll N} u \|_{L^2_I L^2_x} $, while we can obtain suitable estimates for the other terms.
A key point in improving the bilinear estimate for separated frequency functions is to use this decomposition, namely to estimate $\|P_N u P_{\ll N} u \|_{L^2_I L^2_x} $ recursively.
More precisely, the contributions of the second and fourth terms can be bounded as in the first region of \eqref{tata6}, since there are at least two functions whose frequencies satisfy $ |\xi| \gtrsim N$.
The contribution of the third term can be controlled thanks to a commutator estimate, as follows:
\begin{align*}
N^s  N^{\frac{1-\alpha}{4}} &\|\partial_x P_{\sim N}([P_N,(P_{\ll N}u)^{k-1}]P_{\sim N}u)\|_{L^\frac43_I L^1_x} \\
& \lesssim  N^{\frac{1-\alpha}{4}} N^{-\frac{3}{4}\gamma}
\|P_{\sim N} u\|_{L^\infty_T H^s_x} \|\partial_x (P_{\ll N}u)^{k-1}\|_{L^\infty_T L^2_x} \\
& \lesssim
N^{\frac{1-\alpha}{4}} N^{-\frac{3}{4}\gamma} N^{\frac12}
\|u\|_{L^\infty_T H^s_x}
\prod_{j=2}^k \|P_{N_j}u\|_{L^\infty_{T}H^{\frac12+}_x} \, .
\end{align*}
This yields the same restriction $\gamma\ge 1-\alpha/3$. Finally, the contribution of the first term is estimated by
\EQQS{
  N^s N N^{-\frac{\gamma}{2}}
  \|P_N u P_{\ll N} u \|_{L^2_I L^2_x}
  \prod_{j=2}^k \|P_{N_j}u\|_{L^\infty_{T}H^{\frac12+}_x} \, .
}

Therefore, applying the bilinear estimate for separated frequency functions (see \eqref{eq_bistri2.1}) to the Duhamel formula in order to bound $  \|P_N u P_{\ll N} u \|_{L^2_I L^2_x} $, we eventually obtain, for $ \gamma\ge 1-\alpha/3 $,
\begin{align*}
   &N^s \|P_N u P_{\ll N} u \|_{L^2_I L^2_x} \\
   &\lesssim N^{1-\frac{\al}{2}-\frac{\ga}{2}}
     N^s \|P_N u P_{\ll N} u \|_{L^2_I L^2_x}
    + N^{-\frac{\al}{2}} \|u\|_{L^\infty_T H_x^s}
     C(\|u\|_{L^\infty_T H_x^{\frac12+}}),
\end{align*}
which, for sufficiently large $N$, leads to
\begin{equation}\label{cds}
  N^s \|P_N u P_{\ll N} u \|_{L^2_I L^2_x}
  \lesssim N^{-\frac{\al}{2}} \|u\|_{L^\infty_T H_x^s}
    C(\|u\|_{L^\infty_T H_x^{\frac12+}})
\end{equation}
as soon as $\gamma>2-\alpha $ and $ \gamma\ge 1-\alpha/3 $.
(In fact, the endpoint $\gamma=2-\alpha $ can also be reached by adjusting the length of the time intervals $|I|\sim TN^{-\ga}$, taking $T>0$ sufficiently small.
See also Remark \ref{rem_gamma}.)
We point out that \eqref{cds} is the key estimate to get our improved refined estimates.
Indeed, by reinjecting this bilinear estimate into the second region ($N\sim N_1\gg N_2$), instead of \eqref{tata66}, we obtain, for $\gamma\ge \max\{2-\alpha,1-\alpha/3\}$ and sufficiently large $N$,
\begin{align*}
  N^s \left\|\int_I  U_\al(-t') \partial_x P_N f(u(t')) dt'  \right\|_{L_x^2} & \lesssim N^{s+1}  \|P_{N_1} u P_{N_2} u \cdots P_{N_k}u\|_{L^1_I L^2_x} \\
  & \lesssim N^{s+1-\frac{\ga}{2}}\|P_{N_1} u P_{N_2} u \|_{L^2_{I}L^2_x} \|u\|_{L^\infty_{T,x}}^{k-1} \\
  & \lesssim  N^{1-\frac{\al}{2}-\frac{\ga}{2}}
    \|u\|_{L^\infty_T H_x^s}
    C(\|u\|_{L^\infty_T H_x^{\frac12+}})\\
  & \lesssim   \|u\|_{L^\infty_T H_x^s} C(\|u\|_{L^\infty_T H_x^{\frac12+}}).
\end{align*}
This proves \eqref{tata5} for $\gamma= \max\{2-\alpha,1-\alpha/3\}$, which in turn yields the bilinear estimate \eqref{tata3} for the same range of $\gamma$ (see Proposition \ref{prop_bistri} for details).
Moreover, we observe that \eqref{tata5} also leads to improvements in both the refined linear Strichartz estimate (Proposition \ref{prop_RefStri}) and the refined bilinear estimate for separated frequency functions (Proposition \ref{prop_bistri4}) for this range of $\gamma$.
All these new refined estimates are stated in the beginning of Section \ref{sec_boot}.
Finally, it is worth noticing that we could also have chosen to prove Theorem \ref{theo1} by using the improvement of the refined linear Strichartz estimate stated in Proposition \ref{prop_RefStri}.
Indeed, on the real line, since the linear Strichartz estimate (Proposition \ref{prop_stri1}) associated with $L_{\alpha+1} $ gives the same gain as the bilinear estimate for non-separated frequency functions, our improvement of their associated refined estimates lead to the same estimate on the nonlinear interactions appearing in the energy method.

\subsection*{Plan of This Paper}
This paper is organized as follows.
In Section \ref{notation}, we introduce the notation and collect some fundamental estimates.
In Section \ref{sec_boot}, we present the key estimates of this article.
For this purpose, we first recall some classical results, such as the Strichartz estimate (Proposition \ref{prop_stri1}) and Christ-Kiselev type arguments (Lemma \ref{lem_ck}), and then provide the proofs of the improved  refined estimates, building on the ideas outlined above.
In Section \ref{sec_apri}, we establish the a priori estimate for a solution, while in Section \ref{sec_diff}, we derive the a priori estimate for the difference of two solutions.
The proof of the main result also is sketched.

\section{Notation, Function Spaces and Basic Estimates}\label{notation}

\subsection{Notation}\label{sub_notation}

Throughout this paper, $\N$ denotes the set of non-negative integers.
For any positive numbers $a$ and $b$, we write $a\lesssim b$ for $a\le Cb$ with a constant $C>0$.
We also write $a\sim b$ to mean $a\les b\les a$.
Moreover, we use $a\ll b$ to indicate $Ca\le b$ with a suitably large constant $C>1$.
For two non-negative numbers $a,b$, we denote $a\vee b:=\max\{a,b\}$ and $a\wedge b:=\min\{a,b\}$.
We also write $\LR{\cdot}=(1+|\cdot|^2)^{1/2}$.
For $a\in\R$, $a+$ and $a-$ denote numbers slightly greater and slightly less than $a$, respectively.

For $u=u(t,x)$, $\F u=\tilde{u}$ denotes its space-time Fourier transform, whereas $\F_x u=\hat{u}$ (resp.\ $\F_t u$) denotes its Fourier transform in space (resp.\ time).
We define the Riesz potential by $D_x^s g:=\F_x^{-1}(|\xi|^s \F_x g)$ and the Bessel potential by $J_x^s g:=\F_x^{-1}(\LR{\xi}^s \F_x g)$.
We also denote the unitary group associated to the linear part of \eqref{eq1} by $U_\al(t)=e^{-tL_{\al+1}}$, i.e.,
\EQQS{
  U_\al(t)u=\F_x^{-1}(e^{itp_{\al+1}(\xi)}\F_x u).
}
Throughout this paper, we fix a smooth even cutoff function $\chi$:
let $\chi\in C_0^\I(\R)$ satisfy
\EQS{\label{defchi}
  0\le \chi\le 1, \quad \chi|_{-1,1}=1\quad
  \textrm{and}\quad \supp\chi\subset[-2,2].
}
We set $\phi(\xi):=\chi(\xi)-\chi(2\xi)$.
For any $l\in\N$, we define
\EQS{\label{defpsi}
  \phi_{2^l}(\xi):=\phi(2^{-l}\xi),\quad
  \psi_{2^l}(\ta,\xi):=\phi_{2^l}(\ta-p_{\al+1}(\xi)),
}
where $ip_{\al+1}(\xi)$ is the Fourier symbol of $L_{\al+1}$.
By convention, we also denote
\EQQS{
  \phi_0(\xi)=\chi(2\xi)\quad
  \textrm{and}\quad
  \psi_0(\ta,\xi)=\chi(2(\ta-p_{\al+1}(\xi))).
}
Any summations over capitalized variables such as $K, L, M$ or $N$ are presumed to be dyadic.
We work with non-homogeneous dyadic decompositions, i.e., these variables
ranges over numbers of the form $\{2^k; k\in\N\}\cup \{0\}$.
We call those numbers \textit{non-homogeneous dyadic numbers}.
It is worth pointing out that $\sum_N\phi_N(\xi)=1$ for any $\xi\in\R$,
\EQQS{
  \supp(\phi_N)\subset\{N/2\le |\xi|\le 2N\},\ N\ge 1,\quad
  \textrm{and}\quad
   \supp(\phi_0)\subset\{|\xi|\le 1\}.
}

Finally, we define the Littlewood--Paley multipliers $P_N$ and $Q_L$ by
\EQQS{
  P_N u=\F_x^{-1}(\phi_N \F_x u) \quad\textrm{and}
  \quad Q_Lu=\F^{-1}(\psi_L \F u).
}
We also set
$P_{\ge N}:=\sum_{K\ge N}P_K$,
$P_{\le N}:=\sum_{K\le N}P_K, Q_{\ge L}:=\sum_{K\ge L}Q_K$ and $Q_{\le L}:=\sum_{K\le L}Q_K$.

\subsection{Function Spaces}\label{subs_FS}
For $1\le p\le \I$, $L^p(\R)$ is the standard Lebesgue space with the norm $\|\cdot\|_{L^p}$.

In this paper, we will use the frequency envelope method (see for instance \cite{KT1, Tao04}) in order to show the continuity result with respect to initial data.
To this aim, we first introduce the following:
\begin{defn}\label{def_AFW}
  Let $\de>1$.
  A dyadic positive sequence $\{\om_N^{(\de)}\}_{N\in 2^\N\cup\{0\}}$ is called an \textit{acceptable frequency weight} if it satisfies $\om_0^{(\de)}=1$ and $\om_N^{(\de)}\le \om_{2N}^{(\de)}\le \de\om_N^{(\de)}$ for all $N\ge 1$, with $\de\le 2$.
  We simply write $\{\om_N\}$ when no confusion arises.
\end{defn}
With an acceptable frequency weight $\{\om_N\}$, we slightly modulate the classical Sobolev spaces in the following way:
for $s\ge0$, we define $H_\om^s(\R)$ with the norm
\EQQS{
  \|u\|_{H_\om^s}
  :=\bigg(\sum_{N\in 2^{\N}\cup\{0\}}\om_N^2 (1\vee N)^{2s}\|P_N u\|_{L^2}^2\bigg)^{\frac 12}.
}
Note that $H_\om^s(\R)=H^s(\R)$ when we choose $\om_N\equiv 1$.
Here, $H^s(\R)$ is the usual $L^2$--based Sobolev space.
If $B_x$ is one of spaces defined above, for $1\le p\le \I$ and $T>0$, we define the space--time spaces $L_t^p B_x :=L^p(\R;B_x)$ and $L_T^p B_x :=L^p(]0,T[;B_x)$ equipped with the norms (with obvious modifications for $p=\I$)
\EQQS{
  \|u\|_{L_t^p B_x}=\bigg(\int_\R\|u(t,\cdot)\|_{B_x}^p dt\bigg)^{\frac 1p}\quad
  \textrm{and}\quad
  \|u\|_{L_T^p B_x}=\bigg(\int_0^T\|u(t,\cdot)\|_{B_x}^p dt\bigg)^{\frac 1p},
}
respectively.
For $s,b\in\R$, we introduce the Bourgain spaces $X^{s,b}$ associated to the operator $L_{\al+1}$ endowed with the norm
\EQQS{
  \|u\|_{X^{s,b}}
  =\left(\iint_{\R^2} \LR{\xi}^{2s}\LR{\ta-p_{\al+1}(\xi)}^{2b}|\tilde{u}(\ta,\xi)|^2d\ta d\xi\right)^{\frac 12}.
}
We also use a slightly stronger space $X_\om^{s,b}$ with the norm
\EQQS{
  \|u\|_{X_\om^{s,b}}
  :=\bigg(\sum_{N\in 2^\N\cup\{0\}}\om_N^2 (1\vee N)^{2s}\|P_N u\|_{X^{0,b}}^2\bigg)^{\frac 12}.
}
In the proof of the improved bilinear Strichartz estimate, we use the Besov type $X^{s,b,q}$ spaces: for $b\in\R$ and $1\le q<\I$,
\EQQS{
  \|u\|_{X^{0,b,q}}:=\bigg(\sum_{L\in 2^\N \cup\{0\}}(1\vee L)^{bq}\|Q_L u\|_{L_{t,x}^2}^q\bigg)^{\frac 1q}
}
with the obvious modifications in the case $q=\I$.
However, we only use $X^{0,\frac12,1}$ throughout this paper.
We define the function spaces $Z^s $ (resp.\ $Z^s_\om $), with $s\in \R$, as $Z^s:= L_t^\I H^s\cap X^{s-1,1}$ (resp.\ $Z^s_\om:= L_t^\I H_\om^s\cap X_\om^{s-1,1}$), endowed with the natural norm
\EQQS{
  \|u\|_{Z^s}=\|u\|_{L_t^\I H^s}+\|u\|_{X^{s-1,1}} \quad
  (\text{resp}.\  \|u\|_{Z^s_\om}=\|u\|_{L_t^\I H^s_\om}+\|u\|_{X^{s-1,1}_\om}) .
}
We also use the restriction in time versions of these spaces.
Let $T>0$ be a positive time and $B$ be a normed space of space-time functions.
The restriction space $B_T$ will be the space of functions $u:]0,T[\times\R\to\R$ or $\C$ satisfying
\EQQS{
  \|u\|_{B_T}
  :=\inf\{\|\tilde{u}\|_B \ |\ \tilde{u}:\R^2\to\R\ \textrm{or}\ \C,\ \tilde{u}=u\ \textrm{on}\ ]0,T[\times\R\}<\I.
}

Finally, we introduce a bounded linear operator from $X_{\om,T}^{s-1,1}\cap L_T^\I H_\om^s$ into $Z_\om^s$ with a bound which does not depend on $s$ and $T$. The existence of this operator ensures that actually $ Z^s_{\om,T}= L_T^\I H^s_\om\cap X^{s-1,1}_{\om,T}$.
Following \cite{MN08}, we define $\rho_T$ as
\EQS{\label{def_ext}
  \rho_T(u)(t):=U_\al(t)\chi(t)U_\al(-\mu_T(t))u(\mu_T(t)),
}
where $\mu_T$ is the continuous piecewise affine function defined by
\EQS{
  \mu_T(t)=
  \begin{cases}
    0 &\textrm{for}\quad t\notin]0,2T[,\\
    t &\textrm{for}\quad t\in [0,T],\\
    2T-t &\textrm{for}\quad t\in [T,2T].
  \end{cases}
}

\begin{lem}\label{extensionlem}
  Let $0<T\le 1$, $s\in\R$ and let $\{\om_N\}$ be an acceptable frequency weight.
  Then,
  \EQQS{
    \rho_T:&X_{\om,T}^{s-1,1} \cap L_T^\I H_\om^s\to Z^s_\om\\
    &u\mapsto \rho_T(u)
  }
  is a bounded linear operator, i.e.,
  \EQS{\label{eq2.1}
    \|\rho_T(u)\|_{L_t^\I H_\om^s}
    +\|\rho_T(u)\|_{X^{s-1,1}_\om}\lesssim
    \|u\|_{L_T^\I H_\om^s}
    +\|u\|_{X_{\om,T}^{s-1,1}},
  }
  for all $u\in X_{\om,T}^{s-1,1}\cap L_T^\I H_\om^s$.
  Moreover, it holds that
  \EQS{\label{eq2.1single}
  \|\rho_T(u)\|_{L_t^\I H_\om^s}
  \lesssim
  \|u\|_{L_T^\I H_\om^s}
  }
  for all $u\in L_T^\I H_\om^s$.
  Here, the implicit constants in \eqref{eq2.1} and \eqref{eq2.1single} can be chosen independent of $0<T\le 1$ and $s\in\R$.
\end{lem}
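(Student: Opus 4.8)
\emph{Proof plan.} The plan is to pass to the profile $v(t'):=U_\al(-t')u(t')$, so that $\rho_T(u)(t)=U_\al(t)g(t)$ with $g(t):=\chi(t)\,v(\mu_T(t))$, and to reduce the statement to an elementary reparametrisation estimate for the tent map $\mu_T$. (For $t\notin\,]0,2T[$ one has $\mu_T(t)=0$, so $g(t)=\chi(t)v(0)$; the trace $u(0)$ is harmless, as one may work with a representative of $u$ for which $\|u(t)\|_{H^s_\om}\le\|u\|_{L^\I_TH^s_\om}$ for every $t\in[0,T]$, and, for \eqref{eq2.1}, also exploit $u\in X^{s-1,1}_{\om,T}\hookrightarrow C_b(\R;H^{s-1}_\om)$, so that $u(0)$ makes sense in $H^{s-1}_\om$.) Since $U_\al(t)$ is a Fourier multiplier of modulus one, it commutes with each $P_N$ and acts isometrically on $H^s_\om(\R)$; hence $\|\rho_T(u)(t)\|_{H^s_\om}=|\chi(t)|\,\|u(\mu_T(t))\|_{H^s_\om}\le\|u\|_{L^\I_TH^s_\om}$, which is exactly \eqref{eq2.1single}, with a constant $\le 1$ manifestly independent of $s$ and of $0<T\le 1$.

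For the $X^{s-1,1}_\om$ bound I would first note that, since $\rho_T(u)=U_\al(\cdot)g$, the time Fourier transform of $P_N\rho_T(u)$ is the $\xi$-dependent translate by $p_{\al+1}(\xi)$ of that of $P_N g$, whence $\|P_N\rho_T(u)\|_{X^{0,1}}\sim\|P_N g\|_{L^2_{t,x}}+\|\p_t P_N g\|_{L^2_{t,x}}$. As $P_N$ commutes with $U_\al$ and with multiplication by $\chi$ and $\mu_T'$, one has $P_N g(t)=\chi(t)(P_Nv)(\mu_T(t))$ and, by the chain rule, $\p_t P_N g(t)=\chi'(t)(P_Nv)(\mu_T(t))+\chi(t)\mu_T'(t)(P_Nv')(\mu_T(t))$, where $\mu_T'=\1_{]0,T[}-\1_{]T,2T[}$ satisfies $|\mu_T'|\equiv 1$ on $]0,2T[$ and $\mu_T'\equiv 0$ off it. Splitting $]0,2T[$ into the two branches on which $\mu_T$ is affine with slope $\pm1$ yields, for $j=0,1$ and any $w$,
\EQQS{
  \int_\R|\chi^{(j)}(t)|^2\,\mu_T'(t)^2\,\|(P_Nw)(\mu_T(t))\|_{L^2_x}^2\,dt\ \lesssim\ \|P_Nw\|_{L^2_TL^2_x}^2,
}
with no $T$-dependent Jacobian. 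Using this with $w=v'$ for the last term of $\p_t P_N g$, the same identity with $w=v$ for the pieces supported where $\mu_T'\neq 0$, and the fact that off $]0,2T[$ the remaining pieces equal $\chi(t)(P_Nv)(0)$ or $\chi'(t)(P_Nv)(0)$ (bounded in $L^2_tL^2_x$ by $\|P_Nu(0)\|_{L^2_x}$, by isometry), together with $\|P_Nv\|_{L^2_TL^2_x}=\|P_Nu\|_{L^2_TL^2_x}$ and $\|P_Nv'\|_{L^2_TL^2_x}\le\|P_N\widetilde u\|_{X^{0,1}}$ for any extension $\widetilde u$ of $u$, I obtain for each $N$
\EQQS{
  \|P_Ng\|_{L^2_{t,x}}+\|\p_t P_Ng\|_{L^2_{t,x}}\ \lesssim\ \|P_Nu\|_{L^2_TL^2_x}+\|P_Nu(0)\|_{L^2_x}+\inf_{\widetilde u}\|P_N\widetilde u\|_{X^{0,1}}.
}

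It then remains to insert this into $\|\rho_T(u)\|_{X^{s-1,1}_\om}^2\lesssim\sum_N\om_N^2(1\vee N)^{2(s-1)}\big(\|P_Ng\|_{L^2_{t,x}}^2+\|\p_t P_Ng\|_{L^2_{t,x}}^2\big)$. For the $u$- and $u(0)$-terms one uses Tonelli to exchange the sum with the time integral: $\sum_N\om_N^2(1\vee N)^{2(s-1)}\|P_Nu\|_{L^2_TL^2_x}^2=\int_0^T\|u(t)\|_{H^{s-1}_\om}^2\,dt\le T\|u\|_{L^\I_TH^{s-1}_\om}^2\le\|u\|_{L^\I_TH^s_\om}^2$ (using $s-1<s$ and $T\le 1$), and similarly for the $u(0)$-term; for the last term, fixing a near-optimal extension $\widetilde u$ and summing gives $\|\widetilde u\|^2_{X^{s-1,1}_\om}\sim\|u\|^2_{X^{s-1,1}_{\om,T}}$. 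Summing the two contributions proves \eqref{eq2.1}. Uniformity in $s$ and $T$ is automatic: the weight $\om_N(1\vee N)^{s-1}$ enters only in the final summation, every pointwise-in-$N$ estimate being $s$-free, and $T$-uniformity comes from the slope-$\pm1$ reparametrisation together with $T^{1/2}\le 1$.

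The only real work is this bookkeeping around the tent map — the two affine branches of $\mu_T$, the behaviour of $g$ off $]0,2T[$, and the harmlessness of the trace $u(0)$ — plus the routine identification of the $X^{0,1}$-norm of $U_\al(\cdot)g$ with $\|g\|_{H^1_tL^2_x}$. I do not expect any serious analytic difficulty, and in particular the $s$- and $T$-uniformity of the constants falls out of the structure once the estimates are organised as above.
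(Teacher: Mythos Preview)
Your proposal is correct and follows precisely the standard approach --- conjugating by the linear flow to reduce to an $H^1_tL^2_x$ estimate for the profile $g(t)=\chi(t)v(\mu_T(t))$, and then bookkeeping the two affine branches of the tent map $\mu_T$ together with the trace at $t=0$. The paper itself does not give an independent proof: it simply refers to \cite[Lemma~2.4]{MPV19} for the unweighted case $\om_N\equiv 1$ and observes that the argument is insensitive to the choice of acceptable frequency weight, which is exactly what you verify by carrying out all the pointwise-in-$N$ estimates before inserting the weight in the final summation.
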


\begin{proof}
  See Lemma 2.4  in \cite{MPV19} for $ \om_N\equiv 1$ but it is obvious that the result does not depend on $ \om_N$.
\end{proof}

\subsection{Basic Estimates}

In this subsection, we collect some fundamental estimates.
Well-known estimates are adapted for our setting $H_\om^s(\R)$ and $f(u)$.

\begin{lem}
  Let $\{\om_N\}$ be an acceptable frequency weight.
  Then we have the estimate
  \EQS{\label{eq2.2}
    \|uv\|_{H_\om^s}
    \lesssim \|u\|_{H_\om^s}\|v\|_{L^\I}+\|u\|_{L^\I}\|v\|_{H_\om^s},
  }
  whenever $s>0 $ or $ s\ge 0 $ and $ \omega_N \equiv 1$.
  In particular, for any fixed real smooth function $ f $ with $ f(0)=0 $, there exists a real smooth function $ G=G[f] $ that is increasing and non-negative on $ \R_+ $  such that
    \EQS{\label{eq2.2ana}
    \|f(u)\|_{H_\om^s}
    \lesssim   G(\|u\|_{L^\I}) \|u\|_{H_\om^s},
  }
   whenever $s>0 $ or $ s\ge 0 $ and $ \omega_N \equiv 1$.
 \end{lem}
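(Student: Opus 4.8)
The plan is to derive both estimates from the Littlewood--Paley calculus, proving the product (Leibniz-type) estimate \eqref{eq2.2} first and then obtaining the composition estimate \eqref{eq2.2ana} from it. The case $s=0$ with $\om_N\equiv1$ is elementary and I would dispose of it separately: \eqref{eq2.2} then reduces to $\|uv\|_{L^2}\le\|u\|_{L^2}\|v\|_{L^\I}$, while, since $f(0)=0$, the fundamental theorem of calculus yields the pointwise bound $|f(u(x))|\le G_0(\|u\|_{L^\I})\,|u(x)|$ with $G_0(r):=\sup_{|y|\le r}|f'(y)|$, so that \eqref{eq2.2ana} follows from H\"older's inequality once $G_0$ is dominated by a smooth increasing function. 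From now on the discussion concerns the case $s>0$, where the weight is a genuinely active ingredient.

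For \eqref{eq2.2} with $s>0$, I would use Bony's paraproduct decomposition, splitting $uv=\sum_{N_1,N_2}P_{N_1}u\,P_{N_2}v$ into the high--low region $N_1\gg N_2$, the symmetric low--high region $N_1\ll N_2$, and the high--high region $N_1\sim N_2$. In the high--low region the output frequency is $\sim N_1$, so $\om$ and $(1\vee\cdot)^s$ there are comparable to their values at $N_1$ --- this is where the slow variation of the weight ($\om_N\le\om_{2N}\le\de\om_N$, $\de\le2$) is used; bounding $\|P_N(P_{\sim N}u\,P_{\ll N}v)\|_{L^2}\lesssim\|P_{\sim N}u\|_{L^2}\|P_{\ll N}v\|_{L^\I}$ and exploiting that the smooth truncations $P_{\le N}$ are bounded on $L^\I$ uniformly in $N$, the square sum over $N$ is controlled by $\|u\|_{H^s_\om}^2\|v\|_{L^\I}^2$; the low--high region is symmetric and produces $\|u\|_{L^\I}^2\|v\|_{H^s_\om}^2$. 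In the high--high region the output frequency $N$ only satisfies $N\lesssim N_1$, so one estimates $\|P_N(\mathrm{HH})\|_{L^2}\lesssim\|v\|_{L^\I}\sum_{N_1\gtrsim N}\|P_{N_1}u\|_{L^2}$ and applies a Schur test to the kernel $c_{N,N_1}=\frac{\om_N(1\vee N)^s}{\om_{N_1}(1\vee N_1)^s}\lesssim(N/N_1)^s$ (using that $\om$ is nondecreasing); since $s>0$ this kernel is summable in each index, so the contribution is again bounded by $\|v\|_{L^\I}^2\|u\|_{H^s_\om}^2$. Summing the three regions gives \eqref{eq2.2}.

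For \eqref{eq2.2ana} with $s>0$ I would first treat the analytic case that is actually used here (Hypothesis \ref{hyp2}), expanding $f(u)=\sum_{n\ge1}\frac{f^{(n)}(0)}{n!}u^n$ and iterating \eqref{eq2.2}: a straightforward induction gives $\|u^n\|_{H^s_\om}\le C'C^n\|u\|_{L^\I}^{n-1}\|u\|_{H^s_\om}$ with $C,C'$ depending only on the implicit constant in \eqref{eq2.2}, whence $\|f(u)\|_{H^s_\om}\le G(\|u\|_{L^\I})\|u\|_{H^s_\om}$ with $G(r):=C'\sum_{n\ge1}\frac{|f^{(n)}(0)|}{n!}C^nr^{n-1}$. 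This series converges for every $r\ge0$ since $f$ has infinite radius of convergence, and $G$ is smooth, nonnegative and nondecreasing on $\R_+$ because all of its coefficients are nonnegative (one may add a positive multiple of $r$ to make it strictly increasing). For a merely smooth $f$ with $f(0)=0$ one instead invokes the classical composition (Moser) estimate: writing $f(u)=\sum_j\big(f(P_{\le2^{j+1}}u)-f(P_{\le2^j}u)\big)$, each increment equals $P_{\sim2^j}u\cdot m_j$ with $m_j=\int_0^1 f'\big(P_{\le2^j}u+\tau P_{\sim2^j}u\big)\,d\tau$ satisfying $\|m_j\|_{L^\I}\le G(\|u\|_{L^\I})$ and $\|\partial_x^N m_j\|_{L^\I}\lesssim_N 2^{jN}G(\|u\|_{L^\I})$ for all $N$, hence $m_j$ is spectrally concentrated at frequencies $\lesssim2^j$; the argument then closes as in the high--high region above, the weight being handled in the same way.

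The main obstacle is precisely the high--high / composition step: one must sum the dyadic blocks without a logarithmic loss, which forces the Schur-type estimate with kernel $(N/N_1)^s$ and is exactly the reason $s>0$ is required when $\om_N\not\equiv1$. In the composition estimate there is the additional technical point of making rigorous the claim that $f(P_{\le2^{j+1}}u)-f(P_{\le2^j}u)$ is essentially spectrally localized at frequencies $\lesssim2^j$, which follows from the Bernstein-type bounds on the derivatives of $m_j$ recorded above. For the weighted spaces $H^s_\om$, the only extra care needed throughout is to transfer the weight across $\sim$-comparable frequencies and to keep the Schur kernel summable, both guaranteed by $1\le\om_{2N}/\om_N\le\de\le2$ together with $s>0$.
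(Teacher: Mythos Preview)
Your proposal is correct: the paraproduct decomposition with the Schur test in the high--high region is the standard route to \eqref{eq2.2}, and the iteration of \eqref{eq2.2} over the Taylor series (for analytic $f$) or the telescoping/Moser argument (for merely smooth $f$) is the standard route to \eqref{eq2.2ana}. The paper does not actually give a proof here but defers to \cite{MT22}, Lemma 2.2; your argument is almost certainly what is carried out there, so there is nothing substantive to compare.
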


\begin{proof}
  See [\cite{MT22}, Lemma 2.2].
\end{proof}

\begin{lem}
  Assume that $s_1+s_2\ge 0, s_1\wedge s_2\ge s_3, s_3<s_1+s_2-1/2$.
  Then
  \EQS{\label{eq2.3}
    \|uv\|_{H^{s_3}}\lesssim \|u\|_{H^{s_1}}\|v\|_{H^{s_2}}.
  }
  In particular, for $u,v\in H^s(\R)$ with $s>1/2 $ and any fixed real smooth function $f$, there exists a real smooth function $ G=G[f] $ that is increasing and non-negative on $ \R_+ $ such that
   \EQS{\label{eq2.3ana}
    \|f(u)-f(v)\|_{H^\theta}\le G(\|u\|_{H^{s}}+ \|v\|_{H^s})\|u-v\|_{H^{\theta}}.
  }
  for $ \theta\in \{0, s-1\}$.
\end{lem}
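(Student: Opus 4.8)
The plan is to prove the bilinear estimate \eqref{eq2.3} first by a standard Littlewood--Paley argument, and then derive \eqref{eq2.3ana} from it by writing $f(u)-f(v)$ as a Taylor-type telescoping sum and applying \eqref{eq2.3} together with the algebra estimate \eqref{eq2.2ana}. For \eqref{eq2.3}, I would decompose $u=\sum_{N_1}P_{N_1}u$ and $v=\sum_{N_2}P_{N_2}v$, and split the product into the usual three regions $N_1\gg N_2$, $N_1\sim N_2$, and $N_1\ll N_2$. By symmetry (the hypotheses are symmetric in $s_1,s_2$ after possibly relabelling, and in any case the two extremal regions are handled identically) it suffices to treat $N_1\gtrsim N_2$. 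In the region $N_1\gg N_2$ the output $P_N(P_{N_1}u\,P_{N_2}v)$ is nonzero only for $N\sim N_1$, and one estimates $\|P_{N_1}u\,P_{N_2}v\|_{L^2}\le \|P_{N_1}u\|_{L^2}\|P_{N_2}v\|_{L^\infty}\lesssim \|P_{N_1}u\|_{L^2}\,N_2^{1/2}\|P_{N_2}v\|_{L^2}$ using Bernstein; summing in $N_2\le N_1$ costs $N_1^{(1/2-s_2)_+}$ or a logarithm, and the condition $s_3<s_1+s_2-1/2$ together with $s_3\le s_2$ makes the resulting power of $N_1$ summable against $\langle N_1\rangle^{s_3-s_1}$. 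In the diagonal region $N_1\sim N_2$, the output frequency $N$ ranges over $N\lesssim N_1$; here one uses $s_1+s_2\ge 0$ to bound $\|P_{N_1}u\,P_{N_2}v\|_{L^2}\le \|P_{N_1}u\|_{L^2}\|P_{N_2}v\|_{L^\infty}\lesssim N_2^{1/2}\|P_{N_1}u\|_{L^2}\|P_{N_2}v\|_{L^2}$, then sums $\langle N\rangle^{s_3}$ over $N\lesssim N_1$ (costing $N_1^{(s_3)_+}$, harmless since $s_3<s_1+s_2-1/2$), and finally uses Cauchy--Schwarz in $N_1\sim N_2$. Throughout, the low-frequency pieces $P_0$ are handled by $L^\infty\hookleftarrow L^2$ Bernstein and cause no trouble since $s_1,s_2,s_3\ge$ their relevant lower bounds.

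For \eqref{eq2.3ana}, I would use that $f$ is smooth (in fact, under Hypothesis \ref{hyp2}, real analytic, but smoothness with the quantitative control of \eqref{eq2.2ana} is all that is needed) and write, via the fundamental theorem of calculus,
\EQQS{
  f(u)-f(v)=(u-v)\int_0^1 f'(v+\sigma(u-v))\,d\sigma.
}
For $\theta=s-1\in(-1/2,s)$ with $s>1/2$, apply \eqref{eq2.3} with $(s_1,s_2,s_3)=(s,\theta,\theta)$: indeed $s_1+s_2=s+\theta>0$, $s_1\wedge s_2=\theta\ge s_3=\theta$, and $s_3=\theta=s-1<s+\theta-1/2=s_1+s_2-1/2$ exactly because $s>1/2$. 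This gives
\EQQS{
  \|f(u)-f(v)\|_{H^\theta}\lesssim \Bigl\|\int_0^1 f'(v+\sigma(u-v))\,d\sigma\Bigr\|_{H^s}\|u-v\|_{H^\theta}.
}
Since $s>1/2$ we have $H^s(\R)\hookrightarrow L^\infty(\R)$, so $\|v+\sigma(u-v)\|_{L^\infty}\le \|u\|_{H^s}+\|v\|_{H^s}$ uniformly in $\sigma\in[0,1]$; applying \eqref{eq2.2ana} to the smooth function $f'-f'(0)$ (and noting the constant $f'(0)$ contributes $|f'(0)|$ times a low-frequency term bounded by $\|u-v\|_{H^\theta}$ when $\theta\le 0$, or is absorbed directly when $\theta=s-1\le 0$) yields
\EQQS{
  \Bigl\|\int_0^1 f'(v+\sigma(u-v))\,d\sigma\Bigr\|_{H^s}\lesssim \widetilde G(\|u\|_{H^s}+\|v\|_{H^s}),
}
for a smooth increasing $\widetilde G$ built from $f'$. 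Setting $G:=C\,\widetilde G$ gives \eqref{eq2.3ana} for $\theta=s-1$. The case $\theta=0$ is identical, applying \eqref{eq2.3} with $(s_1,s_2,s_3)=(s,0,0)$, whose hypotheses $s+0\ge 0$, $0\wedge s\ge 0$, $0<s-1/2$ again reduce to $s>1/2$.

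The main obstacle, such as it is, is bookkeeping rather than conceptual: one must check that in the two extremal Littlewood--Paley regions the loss from summing the smaller frequency is strictly controlled by the gap $s_1+s_2-1/2-s_3>0$ (and, when $s_3>0$, that the diagonal sum over output frequencies $N\lesssim N_1$ is likewise absorbed), and one must be slightly careful with the inhomogeneous zeroth dyadic blocks $P_0$ and with the constant term $f'(0)$ when $\theta$ is negative. None of these requires more than Bernstein's inequality and Cauchy--Schwarz, and the endpoint-type condition $s_3<s_1+s_2-1/2$ is precisely what prevents a logarithmic divergence; no refinement (e.g.\ Besov spaces) is needed here since the inequality is stated with a strict inequality in the Sobolev indices.
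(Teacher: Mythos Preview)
Your proposal is correct. The paper itself does not give a proof but simply cites \cite{GLM14}, Lemma 3.4 for \eqref{eq2.3} and \cite{MT22}, Lemma 2.3 for \eqref{eq2.3ana}; your Littlewood--Paley paraproduct argument for \eqref{eq2.3} is the standard one underlying such references. For \eqref{eq2.3ana} you use the integral representation $f(u)-f(v)=(u-v)\int_0^1 f'(v+\sigma(u-v))\,d\sigma$ together with \eqref{eq2.3} and \eqref{eq2.2ana}, whereas the cited proof in \cite{MT22} proceeds via the Taylor expansion $f(u)-f(v)=\sum_k \frac{f^{(k)}(0)}{k!}(u-v)\sum_{j=0}^{k-1}u^j v^{k-1-j}$ (available there because $f$ is assumed analytic). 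Your route is slightly more general, since it only requires $f$ to be smooth as stated in the lemma, and otherwise the two arguments are interchangeable.
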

\begin{proof}
  For \eqref{eq2.3}, see [\cite{GLM14}, Lemma 3.4].
  The proof of \eqref{eq2.3ana} can be found in [\cite{MT22}, Lemma 2.3].
\end{proof}

We also use the following Leibniz rule.
In our setting, it is important to use this inequality with $r=1$.

\begin{lem}
    Let $1\le r<\I$, $1<p_1,p_2,q_1,q_2\le \I$ satisfy $\frac{1}{r}= \frac{1}{p_1}+\frac{1}{q_1}=\frac{1}{p_2}+\frac{1}{q_2}$.
    Given $s>0$, there exists $C=C(n,s,r,p_1,p_2,q_1,q_2)<\I$ such that for all $u,v\in \mathcal{S}(\R)$ we have
    \EQS{\label{leibniz1}
      \|J_x^{s}(uv)\|_{L^r}
      \le C\|J_x^{s}u\|_{L^{p_1}}\|v\|_{L^{q_1}}
        +C\|u\|_{L^{p_2}}\|J_x^{s} v\|_{L^{q_2}}.
    }
    Moreover, for any fixed real smooth function $f$ with $f(0)=f'(0)=0$, there exists a real smooth function $ G=G[f] $ that is increasing and non-negative on $ \R_+ $ such that
    \EQS{\label{leibniz2}
      \|J_x^s f(u)\|_{L^1}
      \le G(\|u\|_{H^{\frac{1}{2}+}})\|u\|_{H^s}.
    }
\end{lem}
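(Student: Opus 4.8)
The plan is to establish \eqref{leibniz1} first via the classical Kato--Ponce fractional Leibniz rule, and then derive \eqref{leibniz2} by expanding $f$ in its Taylor series and summing the resulting multilinear estimates with care to control the combinatorial growth. For \eqref{leibniz1}, I would simply invoke the Kato--Ponce commutator/product estimate (see e.g. Gulisashvili--Kon, or Grafakos--Oh for the full endpoint range). The subtle point is the endpoint $r=1$: the classical Kato--Ponce inequality for $\|J_x^s(uv)\|_{L^r}$ with $1<r<\infty$ follows from Coifman--Meyer multiplier theory, but the case $r=1$ requires the more recent refinements. Since the statement only asserts $1\le r<\infty$ with all the $p_i,q_i>1$ strictly (so that $1/r$ is split into two genuinely nontrivial pieces, each exponent away from $1$), one can reduce to the known bilinear fractional Leibniz rule in that generality; I would cite the appropriate reference rather than reprove it.

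For \eqref{leibniz2}, write $f(u)=\sum_{k\ge 2} a_k u^k$, where the sum starts at $k=2$ because $f(0)=f'(0)=0$, and $\{a_k\}$ has infinite radius of convergence (in fact here $f$ is merely smooth, so we only need the local expansion — but the cleaner route under Hypothesis~\ref{hyp2} is to treat the true power series). The first step is to prove the monomial estimate
\EQQS{
  \|J_x^s(u^k)\|_{L^1}\le C^{k} k^{?}\, \|u\|_{H^{\frac12+}}^{k-1}\|u\|_{H^s},
}
uniformly in $k$, with a controlled (at most geometric, times a fixed polynomial) dependence on $k$. To get this, iterate \eqref{leibniz1}: apply it with $r=1$, $p_1=q_2=2$, $q_1=p_2=\infty$ to peel off one factor carrying the derivative $J_x^s$ in $L^2$ and leave $u^{k-1}$ in $L^\infty$; then bound $\|u^{k-1}\|_{L^\infty}\le \|u\|_{L^\infty}^{k-1}$; for the other term $\|u\|_{L^\infty}\|J_x^s(u^{k-1})\|_{L^2}$ one recurses, now estimating $\|J_x^s(u^{k-1})\|_{L^2}$ by the algebra property \eqref{eq2.2} of $H^s$ (valid since $s>0$), which gives $\lesssim (Ck)\|u\|_{L^\infty}^{k-2}\|u\|_{H^s}$, the factor $k$ coming from the $k-1$ ways of placing the derivative. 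Finally use the Sobolev embedding $\|u\|_{L^\infty}\lesssim \|u\|_{H^{\frac12+}}$. Summing over $k\ge 2$ then gives $\|J_x^s f(u)\|_{L^1}\le \big(\sum_{k\ge2}|a_k|\,C^k k\,\|u\|_{H^{\frac12+}}^{k-2}\big)\|u\|_{H^{\frac12+}}\|u\|_{H^s}$, and since the power series of $f$ has infinite radius of convergence the bracketed series defines a finite, increasing, non-negative function $G(\|u\|_{H^{\frac12+}})$ of its argument; we then absorb the extra $\|u\|_{H^{\frac12+}}$ into $G$ as well, yielding \eqref{leibniz2}.

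The main obstacle I anticipate is the bookkeeping of the $k$-dependence in the constants: a naive iteration of \eqref{leibniz1} could produce a factorial $k!$ (from splitting off one factor at a time with a fresh constant each time), which would not sum even against an entire power series unless one is careful. The fix is to not iterate \eqref{leibniz1} all the way down but to stop after one (or two) steps and close using the submultiplicative algebra estimate \eqref{eq2.2}, which costs only a single constant and a single factor of $k$ per application; this keeps the total constant of the form $C^k$ times a low-degree polynomial in $k$, which is harmless against the entire series. A secondary technical point is justifying the interchange of the infinite sum with the $J_x^s$ operator and the $L^1$ norm, which follows from the absolute convergence just established together with density of $\mathcal S(\R)$; and one should note \eqref{leibniz2} is first proved for Schwartz $u$ and then extended to $u\in H^s\cap H^{\frac12+}=H^s$ (for $s>\frac12$) by density.
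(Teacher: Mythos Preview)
Your overall strategy is the same as the paper's: cite Grafakos--Oh for \eqref{leibniz1}, then for \eqref{leibniz2} expand $f$ in its Taylor series, apply \eqref{leibniz1} once to $u^k=u\cdot u^{k-1}$ with $r=1$, close the remaining $\|J_x^s(u^{k-1})\|_{L^2}$ factor by the algebra estimate \eqref{eq2.2}, and sum. Your remark that one should \emph{not} iterate \eqref{leibniz1} all the way down (to avoid a spurious $k!$) but rather stop after one step and use \eqref{eq2.2} is exactly what the paper does.

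There is, however, a concrete slip in your choice of exponents. You propose $r=1$ with $p_1=q_2=2$ and $q_1=p_2=\infty$, but then $\frac{1}{p_1}+\frac{1}{q_1}=\frac12\neq 1=\frac1r$, so this choice is inadmissible in \eqref{leibniz1}. With $r=1$ the only way to keep the derivative-carrying factor in $L^2$ is to take $p_1=q_1=p_2=q_2=2$, as the paper does, yielding
\[
\|J_x^s(u^k)\|_{L^1}\le C\|J_x^s u\|_{L^2}\|u^{k-1}\|_{L^2}+C\|u\|_{L^2}\|J_x^s(u^{k-1})\|_{L^2}.
\]
Then bound $\|u^{k-1}\|_{L^2}\le\|u\|_{L^\infty}^{k-2}\|u\|_{L^2}\lesssim\|u\|_{H^{\frac12+}}^{k-1}$ and $\|J_x^s(u^{k-1})\|_{L^2}\le C^{k}\|u\|_{L^\infty}^{k-2}\|u\|_{H^s}$ via \eqref{eq2.2}, and your summation argument goes through unchanged. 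With this correction your proof coincides with the paper's.
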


\begin{proof}
    See [\cite{GO14}, Theorem 1] for the proof of \eqref{leibniz1}.
    By \eqref{eq2.2} and \eqref{leibniz1}, we have
    \EQQS{
      \|J_x^s (u^k)\|_{L^1}
      \le C\|J_x^s u\|_{L^2}\|u^k\|_{L^2}
        +C\|u\|_{L^2}\|J_x^s (u^{k-1})\|_{L^2}
      \le C^k \|u\|_{H^{\frac{1}{2}+}}^{k-1}
        \|u\|_{H^s}.
    }
    The estimate \eqref{leibniz2} follows from a Taylor expansion and the triangle inequality.
\end{proof}

We will  frequently use the following lemma, which can be seen as a variant of integration by parts.

\begin{lem}\label{lem_comm1}
  Let $N\in 2^{\N}\cup\{0\}$.
  Then,
  \EQQS{
    \left|\int_\R \Pi(u,v)wdx\right|
    \lesssim\|u\|_{L_x^2}\|v\|_{L_x^2}\|\p_x w\|_{L_x^\I},
  }
  where the implicit constant is independent of $u, v, w$ and $N$, and
  \EQS{\label{def_pi}
    \Pi(u,v):=v \p_x P_N^2u +u \p_x P_N^2v .
  }
\end{lem}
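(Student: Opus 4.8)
The plan is to prove the commutator identity underlying Lemma~\ref{lem_comm1} by expanding $\Pi(u,v)$ and integrating by parts once. Write
\EQQS{
  \int_\R \Pi(u,v)w\,dx
  =\int_\R \bigl(v\,\p_x P_N^2 u + u\,\p_x P_N^2 v\bigr)w\,dx .
}
Set $U:=P_N^2 u$ and $V:=P_N^2 v$ momentarily; the point is that $\p_x$ falls on the Littlewood--Paley--squared factors, so one would like to integrate the derivative onto the product $vw$ (resp.\ $uw$) and recombine. More precisely, I would symmetrize: using $\p_x(UV)=V\p_x U + U\p_x V$ is not directly what appears, so instead integrate by parts in each term separately,
\EQQS{
  \int_\R v\,(\p_x P_N^2 u)\,w\,dx
  =-\int_\R P_N^2 u\,\p_x(vw)\,dx
  =-\int_\R (P_N^2 u)\,(\p_x v)\,w\,dx-\int_\R (P_N^2 u)\,v\,(\p_x w)\,dx,
}
and likewise with the roles of $u,v$ swapped. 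Adding the two, the two terms carrying $\p_x v$ and $\p_x u$ combine, after moving one copy of $P_N^2$ across by self-adjointness of $P_N$, into $-\int_\R \bigl[(P_N^2 u)(\p_x v)+(P_N^2 v)(\p_x u)\bigr]w\,dx$, which is again (up to sign and a further integration by parts) of the same shape; the genuinely useful cancellation is that the \emph{leading} piece, where both derivatives would land on the high-frequency factors, disappears.

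A cleaner route, which I would actually carry out, is to observe that
\EQQS{
  \Pi(u,v)=\p_x\bigl(P_N^2 u\cdot v\bigr)+\p_x\bigl(P_N^2 v\cdot u\bigr)
  -(P_N^2 u)\,\p_x v-(P_N^2 v)\,\p_x u,
}
so that pairing with $w$ and integrating by parts on the first two (exact-derivative) terms yields
\EQQS{
  \int_\R \Pi(u,v)w\,dx
  =-\int_\R \bigl(P_N^2 u\cdot v+P_N^2 v\cdot u\bigr)\p_x w\,dx
  -\int_\R\bigl((P_N^2 u)\,\p_x v+(P_N^2 v)\,\p_x u\bigr)w\,dx .
}
The second integral on the right is, after one more integration by parts, equal to $+\int_\R\bigl((P_N^2 u)\,v+(P_N^2 v)\,u\bigr)\p_x w\,dx$ minus a copy of itself — hence it equals $\tfrac12\int_\R\bigl((P_N^2 u)v+(P_N^2 v)u\bigr)\p_x w\,dx$ only up to the commutator $[\p_x,P_N^2]$, which vanishes since $P_N^2$ is a Fourier multiplier commuting with $\p_x$. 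Therefore the whole expression reduces to a fixed linear combination of terms of the form $\int_\R (P_N^2 a)\,b\,\p_x w\,dx$ with $\{a,b\}=\{u,v\}$, and each of these is bounded by $\|P_N^2 a\|_{L_x^2}\|b\|_{L_x^2}\|\p_x w\|_{L_x^\I}\le \|a\|_{L_x^2}\|b\|_{L_x^2}\|\p_x w\|_{L_x^\I}$ by Cauchy--Schwarz, Hölder, and the $L^2$-boundedness of $P_N$ uniformly in $N$ (including $N=0$, where $P_0=\chi(2D_x)$ is still a bounded multiplier). Summing the finitely many terms with constants independent of $N$ gives the claim.

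The only point requiring care — and the place where the argument could go wrong if done carelessly — is the bookkeeping of integrations by parts: one must make sure the term in which \emph{both} derivatives would hit the two $L^2$ factors (producing an uncontrollable $\|\p_x a\|_{L^2}$) genuinely cancels, which it does precisely because $\p_x$ and $P_N^2$ commute and because the two summands in the definition \eqref{def_pi} of $\Pi$ are arranged so that the antisymmetric (top-order) part cancels while the symmetric part is an exact $x$-derivative. For $N=0$ one uses the convention $\phi_0=\chi(2\cdot)$ and the same multiplier bound. All boundary terms at $\pm\infty$ vanish for Schwartz functions, and the estimate then extends to the relevant $L^2$ functions by density, which is all that is needed in the sequel.
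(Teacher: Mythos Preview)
Your argument has a genuine gap at the step where you claim that
\EQQS{
  I:=\int_\R\bigl((P_N^2 u)\,\p_x v+(P_N^2 v)\,\p_x u\bigr)w\,dx
}
equals ``$+\int((P_N^2 u)v+(P_N^2 v)u)\p_x w\,dx$ minus a copy of itself''. Integrating $\p_x v$ and $\p_x u$ by parts actually gives
\EQQS{
  I=-\int_\R \Pi(u,v)\,w\,dx-\int_\R\bigl((P_N^2 u)v+(P_N^2 v)u\bigr)\p_x w\,dx,
}
and combining this with your earlier identity $\int_\R\Pi(u,v)w\,dx=-\int((P_N^2 u)v+(P_N^2 v)u)\p_x w\,dx-I$ yields only the tautology $I=I$, not $2I=(\text{good term})$. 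Plain integration by parts is circular here: $(P_N^2 u)\p_x v+(P_N^2 v)\p_x u$ is \emph{not} a total derivative, and you cannot move all derivatives onto $w$ without using more of the structure of $P_N^2$ than the fact that it commutes with $\p_x$.

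What is missing is a genuine commutator/symbol estimate. One clean route: with $T_N=\p_x P_N^2$ skew-adjoint, moving $T_N$ across and writing $T_N(vw)=wT_Nv+[T_N,w]v$ gives the exact identity
\EQQS{
  2\int_\R\Pi(u,v)w\,dx=-\int_\R u\,[T_N,w]v\,dx-\int_\R v\,[T_N,w]u\,dx,
}
and the kernel representation $[T_N,w]f(x)=\int L_N(x-y)(w(y)-w(x))f(y)\,dy$ together with $|w(y)-w(x)|\le\|\p_x w\|_{L^\infty}|x-y|$ and the scaling bound $\||z|L_N(z)\|_{L^1_z}\lesssim 1$ (uniform in $N$) yields $|\int u[T_N,w]v\,dx|\lesssim\|u\|_{L^2}\|v\|_{L^2}\|\p_x w\|_{L^\infty}$. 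Equivalently on the Fourier side, with $g(\xi)=\xi\phi_N^2(\xi)$ odd and $\|g'\|_{L^\infty}\lesssim 1$, one has $\xi_1\phi_N^2(\xi_1)+\xi_2\phi_N^2(\xi_2)=g(\xi_1)-g(-\xi_2)=(\xi_1+\xi_2)\int_0^1 g'(t\xi_1-(1-t)\xi_2)\,dt$, which transfers the factor $\xi_1+\xi_2=-\xi_3$ onto $w$; this is exactly the mechanism behind \eqref{defai} later in the paper. The paper's own proof cites \cite{MT22}, where this is carried out.
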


\begin{proof}
  See [\cite{MT22}, Lemma 2.4].
\end{proof}
We also frequently use the following commutator estimate.
One can derive \eqref{eq_comm2} by using a Coifman-Meyer type theorem, in particular [\cite{MPTT}, Theorem 1.1].
However, we present a simpler proof, following the argument of Kishimoto \cite{K25}.

\begin{lem}\label{lem_comm2}
  Let $M, N\in 2^{\N}\cup\{0\}$.
  Then,
  \EQS{\label{eq_comm2}
    \|[P_{N}, P_{M} v]P_{\sim N}u\|_{L^1}
    \les \frac{1\vee M}{1\vee N}
     \| P_{M} v\|_{L^2}\|P_{\sim N} u\|_{L^2},
  }
  where the implicit constant is independent of $u, v, M$ and $N$.
\end{lem}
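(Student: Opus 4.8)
The plan is to prove the commutator estimate \eqref{eq_comm2} by a direct frequency-space computation, exploiting the cancellation that makes the commutator behave like the derivative of the low-frequency factor. First I would dispose of the trivial low-frequency case: if $N\lesssim 1$ then $1\vee N\sim 1$ and $P_{\sim N}u$, $[P_N,P_Mv]$ involve only bounded frequencies, so the bound follows from H\"older and Bernstein (in fact the commutator is then bounded by $\|P_Mv\|_{L^2}\|P_{\sim N}u\|_{L^2}$ up to a constant, and $\frac{1\vee M}{1\vee N}\gtrsim 1$ is automatic since for the commutator to be nonzero one needs $M\gtrsim N\sim 1$). So I assume $N\gg 1$, and I may also assume $M\lesssim N$, since otherwise $P_N(P_Mv\,P_{\sim N}u)$ and $P_Mv\,P_N P_{\sim N}u$ have disjoint output frequency support for $M$ far from $N$ — more precisely the commutator vanishes unless $M\lesssim N$ (the output frequency of $P_N(\cdot)$ is $\sim N$, forcing the two input frequencies $\sim M$ and $\sim N$ to be comparable or $M\lesssim N$).

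Writing $g=P_Mv$ and $h=P_{\sim N}u$, the commutator kernel on the Fourier side is
\EQQS{
  \F_x\bigl([P_N,g]h\bigr)(\xi)
  =\int \bigl(\phi_N(\xi)-\phi_N(\xi-\y)\bigr)\,\ha g(\y)\,\ha h(\xi-\y)\,d\y .
}
On the support of the integrand, $|\y|\lesssim M$ and $|\xi|,|\xi-\y|\sim N$, so by the mean value theorem $|\phi_N(\xi)-\phi_N(\xi-\y)|\lesssim |\y|\,\|\phi_N'\|_{L^\I}\lesssim \frac{M}{N}$. This pointwise gain of $\frac{M}{N}=\frac{1\vee M}{1\vee N}$ is the whole point. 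To turn it into an $L^1$ estimate I would follow Kishimoto's argument: write $\phi_N(\xi)-\phi_N(\xi-\y)=\y\int_0^1\phi_N'(\xi-\theta\y)\,d\theta$, substitute, and recognize the resulting expression as an average over $\theta\in[0,1]$ of bilinear Fourier multiplier operators whose symbols $\y\mapsto \phi_N'(\xi-\theta\y)$, after rescaling, have uniformly (in $N,\theta$) bounded Coifman–Meyer / product-type norms; alternatively, and more elementarily, pull the factor $\y$ onto $\ha g$ to produce $\widehat{\p_x g}$, giving the operator bound
\EQQS{
  \bigl\|[P_N,g]h\bigr\|_{L^1}
  \lesssim \frac1N\,\|\p_x g\|_{L^2}\|h\|_{L^2}
  \lesssim \frac{M}{N}\,\|g\|_{L^2}\|h\|_{L^2},
}
where the last step uses the Bernstein inequality $\|\p_x P_Mv\|_{L^2}\lesssim M\|P_Mv\|_{L^2}$. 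Concretely, the first inequality is obtained by bounding $\|[P_N,g]h\|_{L^1}$ via $\|\cdot\|_{L^1}\le\|\cdot\|_{L^2}\cdot\|1\|$... no — rather by testing against $w\in L^\I$ with $\|w\|_{L^\I}\le1$ and writing $\int [P_N,g]h\,w = \int \p_x g\cdot (\text{bilinear pseudoproduct of }h,w)$, then applying H\"older and the uniform $L^2\times L^\I\to L^2$ (or $L^2\times L^2\to L^1$) boundedness of the normalized multiplier; this is exactly the place where [\cite{MPTT}, Theorem 1.1] would be invoked in the non-elementary route.

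The main obstacle is making the "bounded multiplier norm, uniformly in $N$ and $\theta$" claim precise without simply citing Coifman–Meyer — i.e. executing Kishimoto's simpler proof. The cleanest way I would present it: decompose $g=P_Mv$ dyadically is unnecessary since it is already localized; instead use that $\phi_N'(\cdot)=N^{-1}\Phi(N^{-1}\cdot)$ for a fixed Schwartz bump $\Phi$, so the operator with symbol $\y\mapsto \phi_N'(\xi-\theta\y)$ has convolution kernel (in the $g$-variable) equal to $N^{-1}$ times a fixed Schwartz function dilated by $N$ and modulated, hence uniformly bounded in $L^1$ of that kernel; integrating over $\theta$ and over $\xi$ costs nothing because everything is supported in $|\xi|\sim N$ and the Schwartz decay absorbs it. Then Young's and H\"older's inequalities close the estimate, and a final application of Bernstein converts the $\|\p_x g\|_{L^2}$ into $M\|g\|_{L^2}$, yielding \eqref{eq_comm2}. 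I expect the bookkeeping of which variable carries the $L^1$, which the $L^2$, and which the $L^\I$ norm to be the only genuinely delicate point; the gain $\frac{1\vee M}{1\vee N}$ itself is forced by the mean value theorem and is robust.
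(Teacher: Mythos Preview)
Your approach is correct and will work, though the execution you sketch is slightly misorganized at the key step. The paper takes a closely related but cleaner route: rather than applying the fundamental theorem of calculus and then a 1D Fourier inversion of $\phi'$, it defines the normalized 2D symbol $\sigma_{M,N}(\xi_1,\xi_2)$ (the commutator symbol times localizing cutoffs times $N/M$), rescales to $\psi(\eta_1,\eta_2)=\sigma_{M,N}(M\eta_1,N\eta_2)$, and writes the bilinear operator via the 2D Fourier inversion formula as a superposition $\int_{\R^2}\hat\psi(x_1,x_2)\,(P_Mv)(\cdot+x_1/M)\,(P_{\sim N}u)(\cdot+x_2/N)\,dx_1dx_2$. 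H\"older plus translation invariance of $L^2$ then reduce matters to $\|\hat\psi\|_{L^1(\R^2)}\lesssim 1$, which follows from the embedding $H^2(\R^2)\hookrightarrow\F L^1(\R^2)$ together with the uniform bounds $|\partial_{\xi_1}^{\beta_1}\partial_{\xi_2}^{\beta_2}\sigma_{M,N}|\lesssim M^{-\beta_1}N^{-\beta_2}$. Your route in fact arrives at the same physical-space superposition once you expand $\phi'$ via its 1D Fourier inversion and split the phase $e^{is(\xi-\theta\eta)/N}$ into translations of $\partial_x g$ and of $h$; but your phrasing --- treating the multiplier as a linear convolution in $g$ parametrized by $\xi$, then saying ``integrating over $\xi$ costs nothing'' --- is not the right organization, since the $\xi$-dependence of the kernel is exactly what makes the problem bilinear rather than a family of linear ones. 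Reorganized correctly your argument goes through; the paper's 2D approach simply avoids the $\theta$-integral and the separate Bernstein step, at the price of checking the symbol-derivative bounds directly.
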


\begin{proof}
  We follow the argument in [\cite{K25}, Lemma 2.9].
  It suffices to consider the case $N\gg M$ and $N\gg 1$.
  We first assume that $M\gg 1$.
  Let $\sigma_{M,N}\in C^\I(\R^2)$ be a function defined by
  \EQS{\label{def_sigma}
    \sigma_{M,N}(\xi_1,\xi_2)
  =\tilde{\phi}_{M}(\xi_1)\tilde{\phi}_{\sim N}(\xi_2)
   (\phi_{N}(\xi_1+\xi_2)-\phi_{N}(\xi_2))
   \cdot \frac{N}{M},
  }
  where $\tilde{\phi}_{M}(\xi_1)=\phi_{M/2}(\xi_1)+\phi_{M}(\xi_1)+\phi_{2M}(\xi_1)$ and $\tilde{\phi}_{\sim N}(\xi_2)=\sum_{j=1}^5 \phi_{2^{j-3}N}(\xi_2)$.
  Then, by elementary calculus, there exists $C>0$ independent of $M$ and $N$ such that
  \EQS{\label{cond_sigma}
    \forall \be=(\be_1,\be_2)\in\N^2,\quad
    |\p_{\xi_1}^{\be_1}\p_{\xi_2}^{\be_2}
     \sigma_{M,N}(\xi_1,\xi_2)|
    \le CM^{-\be_1}N^{-\be_2},
  }
  We also define a $C^\I$ function $\psi_{M,N}$ on $\R^2$ by $\psi_{M,N}(\eta_1,\eta_2)
    :=\sigma_{M,N}(M\eta_1,N\eta_2)$.
  We write $\psi_{M,N}=\psi$ if there is no risk of confusion.
  By the inverse Fourier transform, we obtain
  \EQQS{
    \sigma_{M,N}(\xi_1,\xi_2)
      =\psi_{M,N}\Big(\frac{\xi_1}{M},\frac{\xi_2}{N}\Big)
      =\int_{\R^2}\hat{\psi}_{M,N}(x_1,x_2)
        e^{i(\frac{\xi_1}{M}x_1+\frac{\xi_2}{N}x_2)}dx_1dx_2,
  }
  where $\hat{\psi}_{M,N}$ is the Fourier transform of $\psi_{M,N}$.
  Then, we have
  \EQQS{
    &\frac{N}{M}\|[P_{N}, P_{M} v]P_{\sim N}u\|_{L^1}\\
    &=\bigg\|\F^{-1}\bigg[ \int_{\xi=\xi_1+\xi_2}
      \sigma_{M,N}(\xi_1,\xi_2)
      \F[P_{M} v](\xi_1)\F[P_{\sim N} u](\xi_2) d\xi_{1,2}
      \bigg]\bigg\|_{L^1}\\
    &\le \int_{\R^2}|\hat{\psi}(x_1,x_2)|
      \bigg\|\F^{-1}\bigg[ \int_{\xi=\xi_1+\xi_2}
        e^{i\frac{x_1}{M}\xi_1}
        \F[P_{M} v](\xi_1)
        e^{i\frac{x_2}{N}\xi_2}
        \F[P_{\sim N} u](\xi_2)
        \bigg]\bigg\|_{L^1}dx_1dx_2\\
    &=C\int_{\R^2}|\hat{\psi}(x_1,x_2)|
      \Big\|(P_{M} v)\Big(\cdot+\frac{x_1}{M}\Big)
        (P_{\sim N} u)\Big(\cdot+\frac{x_2}{N}\Big)\Big\|_{L^1}
        dx_1dx_2\\
    &\le C\|\hat{\psi}\|_{L^1(\R^2)}\|P_{M} v\|_{L^2}
      \|P_{\sim N} u\|_{L^2}.
  }
  Therefore, it suffices to show that $\|\hat{\psi}\|_{L^1(\R^2)}\le C$ for some $C>0$, where the constant $C$ is independent of $M,N$.
  By the Sobolev inequality, we have
  \EQQS{
    \|\hat{\psi}\|_{L^1(\R^2)}
    \les \|\psi\|_{H^{2}(\R^2)}
    \les \|\psi\|_{L^2(\R^2)}+\|\p_{\xi_1}^2\psi\|_{L^2(\R^2)}
      +\|\p_{\xi_2}^2\psi\|_{L^2(\R^2)}.
  }
  All these terms are bounded uniformly, thanks to the definition of $\psi$ and the condition \eqref{cond_sigma}.
  We note that the above computation also holds in the case $M\les 1$, with a slight modification.
  This completes the proof.
\end{proof}

\section{Improved refined bilinear and linear Strichartz estimates}
\label{sec_boot}

In this section, we prove the main estimates that play a crucial role in the proof of the a priori estimate (Proposition \ref{prop_apri1}).
To this end, we first recall the standard Strichartz estimate associated with our operator $L_{\al+1}$.
The following estimate is essentially proved in Theorem 2.1 of \cite{KPV3}.

\begin{prop}\label{prop_stri1}
  Let $0<T<2$ and $\al\in[1,2]$, and $\theta_j=0,1$ for $j=1,2$.
  Then, it holds that for any $\vp\in L_x^2(\R)$,
  \EQS{\label{stri_1}
    \left\|D_x^{\frac{\al-1}{4}}U_\al(t)\vp\right\|_{L_T^4 L_x^\I}
    \lesssim \|\vp\|_{L_x^2}.
  }
  Moreover, it holds that for any $f\in L^{\frac{4}{4-\te_2}}(]0,T[; L^{\frac{2}{1+\te_2}})$
  \EQS{\label{stri_2}
    \bigg\|\int_0^T D_x^{\frac{\al-1}{4}(\te_1+\te_2)} U_\al(t-t') f(t',x) dt'\bigg\|_{L_T^{\frac{4}{\te_1}} L_x^{\frac{2}{1-\te_1}}}
    \lesssim \|f\|_{L_T^{\frac{4}{4-\te_2}} L_x^{\frac{2}{1+\te_2}}}.
  }
\end{prop}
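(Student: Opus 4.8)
The plan is to deduce the inhomogeneous estimate \eqref{stri_2}, in each of the four cases $(\theta_1,\theta_2)\in\{0,1\}^2$, from the single homogeneous estimate \eqref{stri_1} by soft duality and composition arguments, and then to prove \eqref{stri_1} itself by a $TT^*$ argument fed by the pointwise dispersive bound attached to $L_{\al+1}$, which in turn rests on a stationary‑phase estimate of Kenig--Ponce--Vega type.

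Granting \eqref{stri_1}, the four cases of \eqref{stri_2} are routine. The case $(0,0)$ is Minkowski's inequality together with the unitarity of $U_\al$ on $L_x^2$; the case $(1,0)$ follows from Minkowski's inequality in $t'$ and then \eqref{stri_1}; the case $(0,1)$ is the dual of $(1,0)$; and for $(1,1)$ one factorizes $D_x^{\frac{\al-1}{2}}U_\al(t-t')=\bigl(D_x^{\frac{\al-1}{4}}U_\al(t)\bigr)\bigl(D_x^{\frac{\al-1}{4}}U_\al(-t')\bigr)$, uses the $(0,1)$ case to place $\int_0^T D_x^{\frac{\al-1}{4}}U_\al(-t')f(t',\cdot)\,dt'$ in $L_x^2$ with the correct norm, and then applies \eqref{stri_1}; this last case is exactly the $TT^*$ form of \eqref{stri_1}.

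It remains to prove \eqref{stri_1}. By $TT^*$ this is equivalent to $\bigl\|\int_0^T D_x^{\frac{\al-1}{2}}U_\al(t-t')g(t',\cdot)\,dt'\bigr\|_{L_T^4L_x^\I}\lesssim\|g\|_{L_T^{4/3}L_x^1}$. Split $g$ into frequencies $\lesssim\xi_0$ and $\gtrsim\xi_0$. For the low part, since $\al\ge1$ the symbol $|\xi|^{\frac{\al-1}{2}}$ is bounded on $\{|\xi|\lesssim\xi_0\}$, hence $\|D_x^{\frac{\al-1}{2}}U_\al(\tau)\|_{L_x^1\to L_x^\I}\lesssim1$ uniformly in $\tau$ on that frequency range, and Hölder's inequality in $t$ on $]0,T[$ (using $T<2$) closes this piece. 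For the high part one invokes the dispersive estimate $\|D_x^{\frac{\al-1}{2}}U_\al(\tau)\|_{L_x^1\to L_x^\I}\lesssim|\tau|^{-1/2}$ (for $\tau\ne0$) on frequencies $\gtrsim\xi_0$; the time‑convolution kernel of the operator is then dominated by $|\tau|^{-1/2}$, and Hardy--Littlewood--Sobolev ($|\tau|^{-1/2}$ maps $L_t^{4/3}$ into $L_t^4$, since $\frac14=\frac34-\frac12$) gives the bound. Note that the power $\frac{\al-1}{4}$ in \eqref{stri_1} is chosen precisely so that, after $TT^*$, the weight $|\xi|^{\frac{\al-1}{2}}$ makes this oscillatory‑integral kernel bounded by $|\tau|^{-1/2}$ with no leftover power of the frequency.

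The crux is therefore the dispersive estimate, i.e.\ the bound $\sup_x\bigl|\int_{|\xi|\gtrsim\xi_0}e^{i(x\xi+\tau p_{\al+1}(\xi))}|\xi|^{\frac{\al-1}{2}}\,d\xi\bigr|\lesssim|\tau|^{-1/2}$, which is a stationary‑phase estimate. With $\Phi(\xi)=x\xi+\tau p_{\al+1}(\xi)$, Hypothesis \ref{hyp1} gives $\Phi''(\xi)=\tau p_{\al+1}''(\xi)$ with $|\Phi''(\xi)|\sim|\tau||\xi|^{\al-1}$; in particular $\Phi''$ does not vanish for $|\xi|\gtrsim\xi_0$, and $p_{\al+1}'$ is strictly monotone there, so $\Phi'$ has at most one zero $\xi_\ast$. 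On a neighbourhood of $\xi_\ast$ of length $\sim(|\tau||\xi_\ast|^{\al-1})^{-1/2}$, van der Corput's second‑derivative lemma produces a contribution $\lesssim(|\tau||\xi_\ast|^{\al-1})^{-1/2}|\xi_\ast|^{\frac{\al-1}{2}}=|\tau|^{-1/2}$; away from $\xi_\ast$ — or everywhere, if $\Phi'$ has no zero — one decomposes dyadically and integrates by parts, the growth of $|\Phi'|$ and the $C^2$ regularity of $p_{\al+1}$ off the origin yielding a gain in the dyadic parameter that is summable to $\lesssim|\tau|^{-1/2}$. This is essentially the statement and proof of [\cite{KPV3}, Theorem 2.1], whose only structural inputs on the symbol are $p_{\al+1}'\sim\xi^\al$ and $p_{\al+1}''\sim\xi^{\al-1}$ with $p_{\al+1}''$ non‑vanishing at high frequencies — all furnished by Hypothesis \ref{hyp1} — so the argument transfers with essentially no change. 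I expect this last step to be the only genuinely delicate one: the van der Corput / non‑stationary‑phase bookkeeping must be arranged so that the stationary contribution is frequency‑independent (which is what forces the exponent $\frac{\al-1}{4}$) and so that only the available $C^2$ regularity of $p_{\al+1}$ off the origin is used, i.e.\ a bounded number of integrations by parts in the non‑stationary regime. Everything else — Minkowski, duality, Hardy--Littlewood--Sobolev, Hölder in time — is soft.
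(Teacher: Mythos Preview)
Your proposal is correct and in fact supplies more than the paper does: the paper gives no proof of Proposition~\ref{prop_stri1} at all, merely stating that the estimate ``is essentially proved in Theorem~2.1 of \cite{KPV3}''. Your reduction of the four cases of \eqref{stri_2} to \eqref{stri_1} by unitarity, Minkowski, duality, and the $TT^*$ factorization is standard and correct, as is the $TT^*$ reduction of \eqref{stri_1} to the pointwise dispersive bound $|\tau|^{-1/2}$ for the high-frequency kernel, followed by Hardy--Littlewood--Sobolev.

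One small refinement worth recording for the stationary-phase step: the naive scheme ``van der Corput near $\xi_*$, one integration by parts dyadically away from $\xi_*$'' can leave a logarithmic loss in the number of dyadic scales between $\xi_*$ and $|\tau|^{-1/(\alpha+1)}$. The clean way around this, using only the $C^2$ regularity in Hypothesis~\ref{hyp1}, is to distinguish two regimes in $x$. When $|x|\gtrsim|\tau|^{1/(\alpha+1)}$ (so $\xi_*\gtrsim|\tau|^{-1/(\alpha+1)}$), van der Corput on $|\xi|\sim\xi_*$ and a single integration by parts on the tails already sums to $|\tau|^{-1/2}$. When $|x|\lesssim|\tau|^{1/(\alpha+1)}$, skip the stationary analysis entirely: use the trivial bound $\int_{\xi_0}^{R}|\xi|^{(\alpha-1)/2}d\xi\sim R^{(\alpha+1)/2}$ on $[\xi_0,R]$ and one integration by parts on $[R,\infty)$ (where $|\Phi'|\gtrsim|\tau|\xi^\alpha$), and optimize at $R=|\tau|^{-1/(\alpha+1)}$ to get $|\tau|^{-1/2}$. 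This is exactly the bookkeeping carried out in \cite{KPV3}, so your deferral to that reference is appropriate.
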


To take into account certain special terms, such as commutator terms, we introduce the following notation:

\begin{defn}
  For $u,v\in L^2(\R)$ and $a\in L^\I(\R^2)$, we set
  \EQS{\label{def_lambda}
    \F_x(\La_a(u,v))(\xi)
    :=\int_{\xi_1+\xi_2=\xi} a(\xi_1,\xi_2)
      \hat{u}(\xi_1)\hat{v}(\xi_2) d\xi_1.
  }
  Remark that when $a\equiv1$, we have $\La_a(u,v)=uv$.
\end{defn}
As indicated in the Introduction, in the case of separated frequency interactions, we will make use of the following  quite classical refined bilinear Strichartz estimate.
Its proof essentially follows that of Proposition 3.2
in \cite{MT3}, using \eqref{cor2} of Corollary \ref{cor_free1} stated below.
\begin{prop}[Refined bilinear Strichartz]\label{prop_bistri2}
  Let $0<T<1$ and $\al\in[1,2]$ and $N_1\vee N_2\gg \LR{N_1\wedge N_2}$.
  Let $f_1,f_2\in L^\I(]0,T[;L^2(\R))$ and $a\in L^\I(\R^2)$ be such that $\|a\|_{L^\I}\lesssim 1$.
  Let $u_1,u_2\in C([0,T];L^2(\R))$ satisfying
  \EQQS{
    \p_t u_j + L_{\al+1} u_j +\p_x f_j=0
  }
  on $]0,T[\times \R$ for $j=1,2$.
  Then for any $ \theta\in[0,1] $ it holds
  \EQS{\label{eq_bistri2}
    \begin{aligned}
      \|\La_a(P_{N_1} u_1,&
        P_{N_2} u_2)\|_{L_{T,x}^2}  \lesssim T^{\frac{\theta-1}{2}}
        (N_1\vee N_2)^{-\frac{(1-\theta)(\al-1)}{2}} \LR{N_1\wedge N_2}^{\frac \theta2} \\
     &\times(\|P_{N_1}u_1\|_{L^2_{T,x}}+\|P_{N_1}f_1\|_{L^2_{T,x}})(\|P_{N_2} u_2\|_{L_T^\infty L^2_x}+\|P_{N_2} f_2\|_{L_T^\infty L^2_x}).
    \end{aligned}
  }
\end{prop}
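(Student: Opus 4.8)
The plan is to reduce the bilinear estimate for nonlinear solutions to a bilinear estimate for free solutions via the Duhamel formula, as in the Koch--Tzvetkov approach. Without loss of generality assume $N_1 \geq N_2$, so $N_1 \gg \langle N_2\rangle$. Using the extension operator $\rho_T$ from Lemma~\ref{extensionlem}, we may assume $u_j$ are defined on all of $\R^2$; alternatively, I would work directly with the restriction spaces and introduce cutoffs at the end. The first step is to write, via the Duhamel formula on $]0,T[$,
\[
  P_{N_j} u_j(t) = U_\al(t) P_{N_j} u_j(0) - \int_0^t U_\al(t-t') P_{N_j} \p_x f_j(t')\, dt',
\]
and then split the product $\La_a(P_{N_1}u_1, P_{N_2}u_2)$ into four pieces according to whether each factor comes from the free part or the Duhamel part.

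The second step is to treat the purely free--free term by the refined bilinear Strichartz estimate for free solutions, which is precisely the content of \eqref{cor2} in Corollary~\ref{cor_free1}: for free solutions one gains $(N_1\vee N_2)^{-(\al-1)/2}\langle N_1\wedge N_2\rangle^{0}$ in $L^2_{T,x}$ over a time interval of length $T$, with the $T^{-1/2}$ loss when $\theta$ is taken to be $0$ (the factor $T^{(\theta-1)/2}$ in the statement interpolates this $L^2$-based estimate against the trivial $L^\infty_t L^2_x \times L^\infty_t L^2_x \hookrightarrow L^\infty_t L^1_x$-type bound, which after H\"older in time on $]0,T[$ costs $T^{1/2}$ and gives the factor $\langle N_1\wedge N_2\rangle^{1/2}$ from Bernstein; so $\theta$ is the interpolation parameter between these two endpoints). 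The free data $\|P_{N_j}u_j(0)\|_{L^2_x}$ is controlled by $\|P_{N_j}u_j\|_{L^\infty_T L^2_x}$, which in turn is bounded, again through the Duhamel formula and the fact that $\int_0^t U_\al(t-t')P_{N_j}\p_x f_j\,dt'$ has $L^\infty_T L^2_x$-norm $\lesssim \|P_{N_j}f_j\|_{L^1_T L^2_x}$ — but here one must be careful to land on $\|P_{N_1}f_1\|_{L^2_{T,x}}$ and $\|P_{N_2}f_2\|_{L^\infty_T L^2_x}$ as in the statement, so one should bound $\|P_{N_1}u_1(0)\|_{L^2_x} \lesssim \|P_{N_1}u_1\|_{L^\infty_T L^2_x}$ is too lossy; instead, for the $N_1$-factor one keeps it inside an $L^2_t$ norm throughout.

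The third step — and the main technical point — is to handle the three terms involving at least one Duhamel factor. Here the idea is to not estimate the Duhamel term by brute force, but rather to exploit the structure: for a term of the form $\La_a\bigl(\int_0^t U_\al(t-t')P_{N_1}\p_x f_1\,dt',\ U_\al(t)P_{N_2}u_2(0)\bigr)$, one applies a Christ--Kiselev type argument (Lemma~\ref{lem_ck}) to remove the time integral's dependence on $t$, reducing to the bilinear estimate for $\La_a(U_\al(t)P_{N_1}g, U_\al(t)P_{N_2}h)$ with $g = \int_0^T U_\al(-t')P_{N_1}\p_x f_1\,dt'$; the derivative $\p_x$ lands on the $N_1$-frequency, producing a factor $N_1$ which is then absorbed. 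The key estimate here should be that $\|g\|_{L^2_x}$ — appearing with an extra $N_1$ from $\p_x$ — is still controlled: $N_1\|\int_0^T U_\al(-t')P_{N_1}\p_x^{-1}\p_x f_1\,dt'\|_{L^2_x}$, and by duality/Plancherel and Minkowski this is $\lesssim N_1 \|P_{N_1}f_1\|_{L^1_T L^2_x} \lesssim N_1 T^{1/2}\|P_{N_1}f_1\|_{L^2_{T,x}}$, which seems too lossy by a power of $N_1$. The resolution is that for the free solution bilinear estimate one does \emph{not} need both factors to be free — one only needs the lower-frequency factor in a good space; so one should instead feed the Duhamel $N_1$-term into the estimate keeping the $\p_x$ unexpanded and use that the bilinear Strichartz estimate \eqref{cor2} for free solutions, applied to $(\p_x$-free-solution$)\times($free solution$)$, already accounts for the derivative via the $N_1^{(1-\al)/4}\cdot N_1$ balance together with H\"older in $t$ giving $T^{1/4}$ or $T^{1/2}$ — exactly the mechanism of \eqref{tata6}. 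I expect the bookkeeping of exactly which $L^p_t$ norm each of the four pieces lands in, and matching the claimed right-hand side $(\|P_{N_1}u_1\|_{L^2_{T,x}}+\|P_{N_1}f_1\|_{L^2_{T,x}})(\|P_{N_2}u_2\|_{L^\infty_T L^2_x}+\|P_{N_2}f_2\|_{L^\infty_T L^2_x})$, to be the main obstacle; the harmonic-analytic input (Corollary~\ref{cor_free1}, Lemma~\ref{lem_ck}) is off the shelf. Finally, summing the four contributions and optimizing gives \eqref{eq_bistri2}, and the $\La_a$ with general bounded symbol $\|a\|_{L^\infty}\lesssim 1$ is handled exactly as in Proposition~3.2 of \cite{MT3} since the argument only uses pointwise-in-frequency bounds on $a$.
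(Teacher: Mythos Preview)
Your proposal has a genuine gap: you never split $[0,T]$ into short subintervals, and without that splitting the Duhamel contribution cannot be closed. Working on the full interval with Duhamel from a single base point $t_0$, the term
\[
  \La_a\Bigl(\int_{t_0}^t U_\al(t-t')\p_x P_{N_1}f_1\,dt',\ U_\al(t-t_0)P_{N_2}u_2(t_0)\Bigr)
\]
can be bounded (via Minkowski or Christ--Kiselev together with \eqref{cor2} on $I=[0,T]$, so $M=1$) only by
\[
  T^{\frac{\theta}{2}}\,(N_1\vee N_2)^{-\frac{(1-\theta)\al}{2}}\,\langle N_1\wedge N_2\rangle^{\frac{\theta}{2}}\,N_1\,\|P_{N_1}f_1\|_{L^1_T L^2_x}\,\|P_{N_2}u_2\|_{L^\infty_T L^2_x},
\]
and after $\|P_{N_1}f_1\|_{L^1_T L^2_x}\le T^{1/2}\|P_{N_1}f_1\|_{L^2_{T,x}}$ the power of $N_1$ is $1-\frac{(1-\theta)\al}{2}$, which exceeds the target exponent $-\frac{(1-\theta)(\al-1)}{2}$ by exactly $\frac{1+\theta}{2}>0$. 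You correctly flag this (``seems too lossy by a power of $N_1$''), but your proposed fix --- invoking the mechanism of \eqref{tata6} --- does not apply: that mechanism gains $|I|^{3/4}\sim N^{-3\gamma/4}$ from H\"older in time precisely because $I$ is a \emph{short} interval of length $\sim N^{-\gamma}$, which you have not introduced. On the full interval the Strichartz gain $N^{(1-\al)/4}$ alone is never enough to kill the extra $N_1$.

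The paper's proof (following Proposition~3.2 of \cite{MT3}) hinges on exactly this missing step: partition $[0,T]$ into $\sim N_1$ subintervals $I_j$ of length $\sim T/N_1$ (i.e.\ Koch--Tzvetkov with $\gamma=1$), pick on each $I_j$ a point $\tilde t_j$ where $\|P_{N_1}u_1(\cdot)\|_{L^2_x}$ is minimal, and write Duhamel from $\tilde t_j$. On each $I_j$ the Duhamel integral has length $\sim T/N_1$, so $N_1\|P_{N_1}f_1\|_{L^1(I_j;L^2)}\lesssim N_1(T/N_1)^{1/2}\|P_{N_1}f_1\|_{L^2(I_j;L^2)}$; the extra $N_1^{1/2}$ this leaves, combined with the factor $N_1^{-\theta/2}$ from \eqref{cor2} on a short interval and the factor $N_1^{1/2}$ from summing $\sim N_1$ subintervals in $\ell^2$, reproduces exactly $N_1^{-(1-\theta)(\al-1)/2}$. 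Likewise, the minimum trick $\|P_{N_1}u_1(\tilde t_j)\|_{L^2_x}\le |I_j|^{-1/2}\|P_{N_1}u_1\|_{L^2(I_j;L^2_x)}$ is what converts the free part into the $L^2_{T,x}$ norm on the right-hand side with the factor $T^{(\theta-1)/2}$. Once this decomposition is in place, the rest of your outline (four pieces, \eqref{cor2}, treatment of the symbol $a$) is correct.
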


\begin{rem}
  In \cite{MT3}, in the periodic setting one of the main estimates is the following:
  \EQS{\label{eq_bistri2.1}
    \begin{aligned}
      \|\La_a(P_{N_1} u_1,&
        P_{N_2} u_2)\|_{L_{T,x}^2}  \lesssim T^{\frac{\theta-1}{2}}
        \LR{N_1\wedge N_2}^{\frac \theta2} \\
     &\times(\|P_{N_1}u_1\|_{L^2_{T,x}}+\|P_{N_1}f_1\|_{L^2_{T,x}})(\|P_{N_2} u_2\|_{L_T^\infty L^2_x}+\|P_{N_2} f_2\|_{L_T^\infty L^2_x})
    \end{aligned}
  }
  under the same assumption as in Proposition \ref{prop_bistri2}.
  It is worth noticing that on $\R $, in sharp contrast to the periodic setting, we obtain a regularizing effect as soon as $\alpha>1 $.
  However, \eqref{eq_bistri2.1} is already strong enough to deal with some nonlinear interactions at the regularity $s>1/2$.
  Note that \eqref{eq_bistri2} essentially follows from the proof of \eqref{eq_bistri2.1}.
  The only difference is that we use \eqref{bil2} instead of (3.6) in \cite{MT3}, which is the corresponding estimate on $\T$ to \eqref{bil2}.
\end{rem}

Let us now state our improved refined estimates.
As explained in Subsection \ref{sub13}, the cornerstone of our improvement lies in the observation that the bilinear estimate on small intervals of length $ N^{-1} $ for solutions to \eqref{eq1}, projected onto separated frequencies, may be extended to larger intervals, namely of length $N^{-\ga}$ with $\ga=\max\{2-\al,1-\al/3\}$.
This is proven further below in Proposition \ref{prop_bistri3}.
Note that this extension seems not to be possible on $\T$.
As indicated in the Introduction, we could equivalently prove our UU result by using the improved refined bilinear estimate for non-separated frequency functions or the improved refined linear (Strichartz) estimate.
We point out that all these improved refined estimates are consequences of Proposition \ref{prop_bistri3}.

We start with the improved refined bilinear estimates for non-separated frequency functions, which will be used in the energy estimates.

\begin{prop}[Improved refined bilinear estimate I]\label{prop_bistri}
  Let $0<T<1$, $\al\in[1,2]$, $\ga=\max\{2-\al,1-\al/3\}$, $N_1\ge N_2\ge 1$, $s>1/2$, and $0<\e\ll 1$ so that $s>1/2+\e$.
  Let $a\in L^\I(\R^2)$ be such that $\|a\|_{L^\I}\les 1$.
  Finally, let $u_1,u_2\in L^\I(]0,T[;H^s(\R))$ satisfying \eqref{eq1} on $]0,T[\times \R$, with $f $ satisfying Hypothesis \ref{hyp2}, and for some $K\ge 1$,
  \EQQS{
    \|u_1\|_{L_T^\I H_x^{\frac{1}{2}+\e}}
    +\|u_2\|_{L_T^\I H_x^{\frac{1}{2}+\e}}\le K.
  }
  Then for sufficiently small $T>0$ satisfying $G(K)T^{\frac{1}{2}}\ll 1$, it holds that
  \EQS{\label{eq_bistri1}
    \begin{aligned}
      &\| \La_a(P_{N_1} u_1, P_{N_2} u_2)\|_{L_{T,x}^2}\\
      &\le G(K) T^{\frac{1}{4}}
       N_1^{\frac{\ga+1-\al}{4}}N_2^{-s}
       \Big(T^{-\frac{1}{2}}\|P_{N_1}u_1\|_{L_{T,x}^2}
       +N_1^{-s-\e}\|u_1\|_{L_T^\I H_x^s}\Big)
       \|u_2\|_{L_T^\I H_x^s},
    \end{aligned}
  }
  where $G:\R_{\ge 0}\to \R_{\ge 0}$ is a smooth increasing function depending only on $ f$ and on the Fourier symbol $-i p_{\alpha+1} $ of $ L_{\alpha+1}$.
\end{prop}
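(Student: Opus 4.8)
The plan is to run the Koch--Tzvetkov scheme (\cite{KT1}) sketched in Subsection~\ref{sub13}, the improvement over \cite{MT3} being that the Duhamel correction of $P_{N_1}u_j$ can be controlled on time intervals of length $\sim N_1^{-\ga}$ instead of $\sim N_1^{-1}$. I would first reduce to monomials: writing the Taylor expansion $f(y)=\sum_{k\ge 1}c_ky^k$ (convergent on all of $\R$ by Hypothesis~\ref{hyp2}) and absorbing the possible linear term $c_1\p_xu$ into $L_{\al+1}$ -- which preserves Hypothesis~\ref{hyp1} -- it suffices to prove \eqref{eq_bistri1} for $f(u)=u^k$ with $k\ge 2$ and a constant growing at most geometrically in $k$, and then to resum into the smooth increasing function $G$, using \eqref{eq2.2ana} and \eqref{leibniz2}. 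The pseudoproduct $\La_a$ of \eqref{def_lambda} is harmless: all the bilinear Strichartz estimates for \emph{free} solutions that enter the argument, in particular \eqref{tata1}, are proved by Cauchy--Schwarz on the characteristic hypersurface and hence involve $u_j$ only through $|\widehat{u_j}|$, so $\|a\|_{L^\I}\les 1$ suffices; in the remaining steps one simply bounds $\|\La_a(\cdot,\cdot)\|\les\|a\|_{L^\I}\|\cdot\|\,\|\cdot\|$ through the modulus. I therefore write the products below as ordinary products.

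Fix $N_1\ge N_2\ge 1$ and partition $]0,T[$ into $\sim N_1^{\ga}$ consecutive intervals $I$ with left endpoint $t_I$ and length $|I|\sim TN_1^{-\ga}$. Duhamel's formula gives, on each $I$,
\EQQS{
  P_{N_j}u_j(t)&=U_\al(t-t_I)P_{N_j}u_j(t_I)+z_j^{I}(t),\\
  z_j^{I}(t)&:=-\int_{t_I}^{t}U_\al(t-t')\,\p_x P_{N_j}f(u_j(t'))\,dt', \qquad j=1,2.
}
The engine of the proof is exactly the content of Proposition~\ref{prop_bistri3}, equivalently of \eqref{tata5} and its refinement \eqref{cds}: it is for $\ga=\max\{2-\al,1-\al/3\}$, and for $T$ small enough that $G(K)T^{1/2}\ll 1$, that one can control $z_j^{I}$ satisfactorily on intervals of this length. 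Splitting $\p_x P_{N_1}(u_1^k)$ as in Subsection~\ref{sub13}, the separated term $\p_x P_{\sim N_1}(P_{N_1}u_1\,(P_{\ll N_1}u_1)^{k-1})$ is treated by reinjecting the refined bilinear estimate \eqref{eq_bistri2} into the Duhamel formula for $\|P_{N_1}u_1\,P_{\ll N_1}u_1\|_{L^2_IL^2_x}$ and then absorbing this quantity into the left-hand side (this is where $G(K)T^{1/2}\ll1$ is used, and at the endpoint $\ga=2-\al$ the exact length $|I|\sim TN_1^{-\ga}$), producing the gain of \eqref{cds}; the terms with at least two high frequencies are handled directly by the retarded Strichartz estimate \eqref{stri_2} and H\"older in time as in \eqref{tata6}, and the commutator term by Lemma~\ref{lem_comm2}. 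The outcome, in the separated regime $N_2\ll N_1$, is $\|P_{N_1}f(u_1)\|_{L^2_IL^2_x}\les N_1^{-s-\al/2}G(K)\|u_1\|_{L^\I_TH^s}$ and $\|z_1^{I}\|_{L^\I_IL^2_x}\les N_1^{-s}G(K)\|u_1\|_{L^\I_TH^s}$, with strictly better bounds on the $N_2$-piece since $\al>1$.

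It remains to reassemble. Decomposing $P_{N_1}u_1\,P_{N_2}u_2$ on $I$ into the four pieces $V_1^IV_2^I$, $z_1^IV_2^I$, $V_1^Iz_2^I$ and $z_1^Iz_2^I$, where $w_j^I:=P_{N_j}u_j(t_I)$ and $V_j^I:=U_\al(\cdot-t_I)w_j^I$, I bound the first by \eqref{tata1}, i.e.\ by $N_1^{\frac{1-\al}{4}}|I|^{1/4}\|w_1^I\|_{L^2}\|w_2^I\|_{L^2}$, and the other three by regarding $z_j^I$ as the zero-data solution of $\p_tz_j^I+L_{\al+1}z_j^I+\p_xP_{N_j}f(u_j)=0$: a Christ--Kiselev argument (Lemma~\ref{lem_ck}) together with the refined bilinear estimate of Proposition~\ref{prop_bistri2} (in the separated regime; a crude product estimate via Bernstein otherwise) reduces their control to the bounds on $\|P_{N_j}f(u_j)\|_{L^2_IL^2_x}$ and $\|z_j^I\|_{L^2_IL^2_x}$ just recorded. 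Squaring the $I$-localized bounds and summing over the $\sim N_1^{\ga}$ intervals contributes a factor $N_1^{\ga/2}$; combined with $N_1^{\frac{1-\al}{4}}$ and $|I|^{1/4}\sim T^{1/4}N_1^{-\ga/4}$ this yields the prefactor $T^{1/4}N_1^{\frac{\ga+1-\al}{4}}$. One keeps $\|w_2^I\|_{L^2}\le N_2^{-s}\|u_2\|_{L^\I_TH^s}$ as is, while for $\|w_1^I\|_{L^2}$ one uses $|I|^{1/2}\|w_1^I\|_{L^2}\le\|P_{N_1}u_1\|_{L^2_IL^2_x}+|I|^{1/2}\|z_1^I\|_{L^\I_IL^2_x}$, whose first term sums to $T^{-1/2}\|P_{N_1}u_1\|_{L^2_{T,x}}$ after the $\ell^2$-sum and whose second term, together with the three $z$-pieces, sums to $N_1^{-s-\e}\|u_1\|_{L^\I_TH^s}$ (for $\al>3/2$ the $\e$-gain is immediate, while for $\al\le 3/2$ one gains instead a factor $T^{1/2}$, absorbed by $G(K)T^{1/2}\ll1$; for the finitely many $N_1\les T^{-O(1)}$ a direct Bernstein estimate already suffices). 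Resumming over $k$ gives the constant $G$ and hence \eqref{eq_bistri1}. The genuine obstacle lies entirely in Proposition~\ref{prop_bistri3}: pushing the bilinear estimate for separated frequency functions to intervals of length $\sim N_1^{-\ga}$ with $\ga<1$, and in particular closing it at the endpoint $\ga=2-\al$, where one must trade off the length of the time intervals against the smallness $G(K)T^{1/2}\ll1$; once \eqref{cds} is available, the passage to \eqref{eq_bistri1} is the $\ell^2$-summation over time intervals described above. (Alternatively, for $\al>3/2$ one may replace this use of Proposition~\ref{prop_bistri3} by the improvement of the refined linear Strichartz estimate of Proposition~\ref{prop_RefStri}, which on $\R$ affords the same gain.)
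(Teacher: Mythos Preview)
Your overall plan is the same as the paper's: subdivide $[0,T]$ into $\sim N_1^{\gamma}$ intervals $I$ of length $\sim TN_1^{-\gamma}$, Duhamel on each, split into the four pieces (free $\times$ free and the three Duhamel cross terms), and feed the separated piece of $\partial_x P_{N_1}\tilde f(u_1)$ through Proposition~\ref{prop_bistri3}. That part is fine.

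The gap is in your treatment of the cross terms $z_1^IV_2^I$, $V_1^Iz_2^I$, $z_1^Iz_2^I$. You propose to control them via Proposition~\ref{prop_bistri2} ``in the separated regime; a crude product estimate via Bernstein otherwise''. Proposition~\ref{prop_bistri2} requires $N_1\vee N_2\gg\langle N_1\wedge N_2\rangle$, so it is unavailable precisely when $N_1\sim N_2$, and the Bernstein fallback is too weak: it gives at best $T^{1/2}N_2^{1/2}N_1^{-s}N_2^{-s}$ for $\|z_1^IV_2^I\|_{L^2_{T,x}}$, whereas the target is $T^{1/4}N_1^{\frac{\gamma+1-\alpha}{4}}N_2^{-s}N_1^{-s-\e}$. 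With $N_1\sim N_2$ the ratio is $T^{1/4}N_1^{\frac{1+\alpha-\gamma}{4}+\e}$, which diverges for every $\alpha\in[1,2]$ (e.g.\ $\alpha=1$, $\gamma=1$ gives $T^{1/4}N_1^{1/4+\e}$). Your remark about ``$T^{1/2}$ absorbed by $G(K)T^{1/2}\ll1$'' and ``finitely many $N_1\lesssim T^{-O(1)}$'' does not close this, since the defect is a positive power of $N_1$ uniformly in $T$.

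What the paper actually does for $A_j,B_j,D_j$ is to use the \emph{general} bilinear estimate \eqref{cor1} (valid for all $N_1\ge N_2$, not only the separated case), applied directly to the Duhamel integrand: one bounds $\|\int_I U_\alpha(-t')\partial_x P_{\sim N_1}h_1\,dt'\|_{L^2_x}$ by the dual Strichartz estimate \eqref{stri_2} in $L^{4/(4-\theta)}_IL^{2/(\theta+1)}_x$ with $\theta\in\{0,1\}$, obtaining \eqref{eq3.24}, and then feeds in the decomposition \eqref{decom1}--\eqref{decom2} of $P_{N_1}\tilde f(u_1)$ term by term (with Proposition~\ref{prop_bistri3} for $V_1$). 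This preserves the bilinear gain $N_1^{-\frac{\alpha+\gamma-1}{4}}$ uniformly in $N_1,N_2$ and keeps the two-term structure $(\,N_1^{\gamma/2}\|P_{N_1}u_1\|_{L^2_IL^2}+N_1^{-s-\e}\|u_1\|_{L^\infty_TH^s}\,)$ which, after the $\ell^2$-sum over intervals, yields exactly the parenthesis in \eqref{eq_bistri1}. The Christ--Kiselev input is the bilinear Corollaries~\ref{cor_CK1}--\ref{cor_CK2}, not Lemma~\ref{lem_ck}. A minor further point: the paper takes the Duhamel base point at the minimizer $\tilde t_j$ of $\|P_{N_1}u_1(t)\|_{L^2}$ on $I_j$, which gives \eqref{eq3.16} directly and avoids your detour through $|I|^{1/2}\|w_1^I\|_{L^2}\le\|P_{N_1}u_1\|_{L^2_IL^2}+|I|^{1/2}\|z_1^I\|_{L^\infty_IL^2}$.
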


\begin{rem}\label{rem_gamma}
  We make some remarks regarding the choice of $\ga$.
  \begin{enumerate}
    \item Note that $\gamma=\max\{2-\al,1-\al/3\}$ leads to $\gamma=1-\alpha/3 $ for $\gamma\in [1,3/2]$ and
    $ \gamma=2-\alpha $ for $ \alpha\in [1,3/2]$.
    \item For $\al>3/2$, it is also possible to obtain \eqref{eq_bistri1} by taking $N_1$ sufficiently large, rather than making $T$ sufficiently small.
    This is because, in the proof of \eqref{eq_sep1}, we need to ensure the quantity $N_1^{\frac{2-\al-\ga}{2}}T$ to be sufficiently small.
    For simplicity, we treat both cases in a unified manner.
    \item It is worth noting that, again in the case $ \alpha>3/2$,  \eqref{eq_bistri1} enables us to gain a negative power of the high frequency, namely $N_1^{-(\frac{\al}{3}-\frac{1}{2})}$.
  \end{enumerate}
\end{rem}

The  corresponding improved refined linear (Strichartz) estimate can be stated as follows:

\begin{prop}[Improved refined linear estimate]\label{prop_RefStri}
  Let $0<T<1$, $\al\in[1,2]$, $\ga=\max\{2-\al,1-\al/3\}$, $N\ge 1$, $s>1/2$, and $0<\e\ll 1$ so that $s>1/2+\e$.
  Let $u\in L^\I(]0,T[;H^s(\R))$ satisfying \eqref{eq1} on $]0,T[\times \R$, with $f $ satisfying Hypothesis \ref{hyp2}, and $\|u\|_{L_T^\I H_x^{\frac{1}{2}+\e}}\le K$ for some $K\ge 1$.
  Then for sufficiently small $T>0$ satisfying $G(K)T^{\frac{1}{2}}\ll 1$, it holds that
  \EQS{\label{eq_RefStri1}
    \| P_{N} u\|_{L_T^2 L_x^\I}
    \le G(K) N^{\frac{\ga+1-\al}{4}}
       \left(T^{-\frac{1}{4}}\|P_{N}u\|_{L_{T,x}^2}
       +T^{\frac{3}{4}}N^{-s-\e}\|u\|_{L_T^\I H_x^s}\right),
  }
  where $G:\R_{\ge 0}\to \R_{\ge 0}$ is a smooth increasing function depending only on $ f$ and on the Fourier symbol $-i p_{\alpha+1} $ of $ L_{\alpha+1}$.
\end{prop}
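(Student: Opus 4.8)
The plan is to run, for the linear quantity $\|P_N u\|_{L_T^2 L_x^\I}$, the same time-localization scheme that underlies Proposition~\ref{prop_bistri}, with the linear Strichartz estimate \eqref{stri_1} (gain $N^{-\frac{\al-1}{4}}$) playing the role that the bilinear Strichartz estimate \eqref{tata1} for free solutions plays there. The only nonlinear input is the key interaction estimate \eqref{tata5}, which --- in the sharp quantitative form established via the recursive bilinear bound \eqref{cds} of Proposition~\ref{prop_bistri3} --- is available on intervals $I$ of length $|I|\sim TN^{-\ga}$, $\ga=\max\{2-\al,1-\al/3\}$; here the smallness $G(K)T^{1/2}\ll1$ is exactly what is needed to close the recursion \eqref{cds} at the endpoint $\ga=2-\al$. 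This input already applies for every $f$ satisfying Hypothesis~\ref{hyp2} (for $f(u)=u^k$ one decomposes $\p_x P_N f(u)$ in frequency and then sums the Taylor series, absorbing it into $G$ via \eqref{leibniz2}), so the remaining work is essentially bookkeeping of time localizations.

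First, fix $N\ge1$ and split $]0,T[$ into $\sim N^\ga$ consecutive intervals $I_j$ with $|I_j|\sim TN^{-\ga}$; note $\ga\in[\frac13,1]$ for $\al\in[1,2]$, so this splitting is non-trivial. Since $u$ solves \eqref{eq1}, Remark~\ref{rem_conti} gives $u\in C([0,T];H^{s-}(\R))$, hence $t\mapsto\|P_N u(t)\|_{L_x^2}^2$ is continuous and one may pick a good time $t_j\in I_j$ with $\|P_N u(t_j)\|_{L_x^2}\le|I_j|^{-1/2}\|P_N u\|_{L^2_{I_j}L_x^2}$. On $I_j$ we base the Duhamel formula at $t_j$,
\[
P_N u(t)=U_\al(t-t_j)P_N u(t_j)+\int_{t_j}^t U_\al(t-t')\p_x P_N f(u(t'))\,dt',\qquad t\in I_j,
\]
and estimate the $L^2_{I_j}L_x^\I$-norm of each term. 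For the free part, H\"older in time followed by \eqref{stri_1} yields the bound $|I_j|^{1/4}N^{-\frac{\al-1}{4}}\|P_N u(t_j)\|_{L_x^2}\lesssim|I_j|^{-1/4}N^{-\frac{\al-1}{4}}\|P_N u\|_{L^2_{I_j}L_x^2}$. For the Duhamel part, Lemma~\ref{lem_ck} removes the retardation, so it suffices to control $U_\al(t)\int_{I_j}U_\al(-t')\p_x P_N f(u)\,dt'$ in $L^2_{I_j}L_x^\I$; H\"older in time and \eqref{stri_1} bound this by $|I_j|^{1/4}N^{-\frac{\al-1}{4}}\|\int_{I_j}U_\al(-t')\p_x P_N f(u)\,dt'\|_{L_x^2}$, and \eqref{tata5} (in the form above, applicable since $|I_j|\sim TN^{-\ga}$) bounds the last factor by $G(K)T^{1/2}N^{-s-\e}\|u\|_{L_T^\I H_x^s}$.

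Finally, square in $t$ and sum over the $\sim N^\ga$ intervals. The free contribution reassembles into $T^{-1/4}N^{\frac{\ga+1-\al}{4}}\|P_N u\|_{L_{T,x}^2}$, since the constant $|I_j|^{-1/4}N^{-\frac{\al-1}{4}}=T^{-1/4}N^{\frac{\ga+1-\al}{4}}$ factors out of the $\ell^2$ sum. The Duhamel contribution, whose per-interval bound $|I_j|^{1/4}N^{-\frac{\al-1}{4}}G(K)T^{1/2}N^{-s-\e}\|u\|_{L_T^\I H_x^s}$ is the same on every $I_j$, acquires an additional factor $(N^\ga)^{1/2}$ from the number of intervals and becomes $N^{\frac{\ga+1-\al}{4}}T^{3/4}N^{-s-\e}G(K)\|u\|_{L_T^\I H_x^s}$. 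Adding the two contributions, and relabeling $G$, yields \eqref{eq_RefStri1}.

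The real obstacle is not this assembly but the input it invokes: the validity of \eqref{tata5} on intervals of length $\sim TN^{-\ga}$ rather than $\sim N^{-1}$, which is precisely Proposition~\ref{prop_bistri3} and rests on the recursive, separated-frequency bilinear estimate \eqref{cds}. A related pitfall inside the Duhamel step is that one must \emph{not} estimate the separated-frequency piece of $\p_x P_N f(u)$ by the crude inhomogeneous Strichartz estimate \eqref{stri_2} as in \eqref{tata66} --- this would force $\ga\ge1$ and destroy the improvement --- so that piece has to be routed through \eqref{tata5} instead. Everything else --- the time splitting, the good-point choice, the Christ--Kiselev step, and the $\ell^2$ summation --- is routine, and the argument runs in close parallel to the proof of Proposition~\ref{prop_bistri}.
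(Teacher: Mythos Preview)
Your approach is essentially the paper's own proof: split $[0,T]$ into $\sim N^{\ga}$ intervals of length $\sim TN^{-\ga}$, pick a good time $\tilde t_j$ on each, apply the homogeneous Strichartz estimate \eqref{stri_1} (with H\"older in time) to the free piece, and control the Duhamel piece through the decomposition \eqref{decom1}--\eqref{decom2} combined with Proposition~\ref{prop_bistri3}; the Christ--Kiselev step and the $\ell^2$ summation are exactly as in the paper.

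One small imprecision is worth flagging. The Duhamel bound you attribute to \eqref{tata5} does \emph{not} yield the clean estimate $\|\int_{I_j}U_\al(-t')\p_x P_N f(u)\,dt'\|_{L_x^2}\le G(K)T^{1/2}N^{-s-\e}\|u\|_{L_T^\infty H_x^s}$ on its own: the $V_1$-contribution, treated via \eqref{eq_sep1}, also produces a term proportional to $\|P_N u\|_{L^2(I_j;L^2)}$, which cannot be bounded by $N^{-s-\e}\|u\|_{L_T^\infty H_x^s}$ (only by $N^{-s}\|u\|_{L_T^\infty H_x^s}$, losing the $\e$). This missing piece is harmless, however: kept separate and summed in $\ell^2$ over $j$, it contributes $G(K)T^{1/4}N^{\frac{\ga+1-\al}{4}}\|P_N u\|_{L_{T,x}^2}$, which is dominated by the free-part term and absorbed upon relabeling $G$ --- exactly what the paper does in the last display of its proof.
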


Finally, we state the improved refined bilinear estimate for separated frequency functions. However, as explained above Proposition \ref{prop_bistri2} is sufficient for our purpose so that we will not make use of this proposition in this paper.

\begin{prop}[Improved refined bilinear estimate II]\label{prop_bistri4}
  Let $0<T<1$, $\al\in[1,2]$, $\ga=\max\{2-\al,1-\al/3\}$, $N_1\gg N_2\ge 1$, $s>1/2$, and $0<\e\ll 1$ so that $s>1/2+\e$.
  Let $a\in L^\I(\R^2)$ be such that $\|a\|_{L^\I}\les 1$.
  Finally, let $u_1,u_2\in L^\I(]0,T[;H^s(\R))$ satisfying \eqref{eq1} on $]0,T[\times \R$, with $f $ satisfying Hypothesis \ref{hyp2}, and for some $K\ge 1$,
  \EQQS{
    \|u_1\|_{L_T^\I H_x^{\frac{1}{2}+\e}}
    +\|u_2\|_{L_T^\I H_x^{\frac{1}{2}+\e}}\le K.
  }
  Then, for sufficiently small $T>0$ satisfying $G(K)T^{\frac{1}{2}}\ll 1$, it holds that
  \EQQS{
    &\| \La_a(P_{N_1} u_1, P_{N_2} u_2)\|_{L_{T,x}^2}\\
      &\le G(K) T^{\frac{\theta-1}{2}}
       N_1^{\frac{-(1-\theta)(\al-\gamma)}{2}} N_2^{\frac{\theta}{2}}N_2^{-s}
       \Big(T^{-\frac{1}{2}}\|P_{N_1}u_1\|_{L_{T,x}^2}
       +N_1^{-s-\e}\|u_1\|_{L_T^\I H_x^s}\Big)
       \|u_2\|_{L_T^\I H_x^s},
  }
  where $G:\R_{\ge 0}\to \R_{\ge 0}$ is a smooth increasing function depending only on $ f$ and on the Fourier symbol $-i p_{\alpha+1} $ of $ L_{\alpha+1}$.
\end{prop}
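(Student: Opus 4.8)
\emph{Proof sketch (plan).} The plan is to obtain Proposition \ref{prop_bistri4} from the single-interval estimate of Proposition \ref{prop_bistri3} by a time-slicing and $\ell^2$-in-time summation, exactly as Proposition \ref{prop_bistri} is obtained from it, the only change being that the free bilinear Strichartz estimate \eqref{bil2} now plays the role of the $L^4_tL^\infty_x$-Strichartz estimate \eqref{stri_2}. Since the two parenthetical factors $\big(T^{-1/2}\|P_{N_1}u_1\|_{L_{T,x}^2}+N_1^{-s-\e}\|u_1\|_{L_T^\I H_x^s}\big)$ on the right-hand side do not depend on $\theta$, I would first reduce matters to the endpoints $\theta=0$ and $\theta=1$: if $\|\La_a(P_{N_1}u_1,P_{N_2}u_2)\|_{L_{T,x}^2}\le A_\theta$ for $\theta\in\{0,1\}$, then the same quantity is $\le A_0^{1-\theta}A_1^{\theta}$, and a direct computation shows that $A_0^{1-\theta}A_1^{\theta}$ equals exactly the bound claimed for the intermediate $\theta$ (the two parenthetical factors coinciding). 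The case $\theta=1$ is immediate: by $\|a\|_{L^\I}\lesssim1$, Hölder in $x$ and Bernstein, $\|\La_a(P_{N_1}u_1,P_{N_2}u_2)\|_{L_{T,x}^2}\lesssim N_2^{1/2-s}\|P_{N_1}u_1\|_{L_{T,x}^2}\|u_2\|_{L_T^\I H_x^s}$, which is dominated by the stated bound because $T^{-1/2}\ge1$ and $G(K)\gtrsim1$.

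For $\theta=0$ I would partition $]0,T[$ into consecutive intervals $I_l$ of length $|I_l|\sim N_1^{-\gamma}$ (or $|I_l|\sim TN_1^{-\gamma}$ with $T$ small, to also cover the endpoint $\gamma=2-\al$, as in Proposition \ref{prop_bistri3}), so that $\#\{I_l\}\sim TN_1^{\gamma}$, and use $\|\La_a(P_{N_1}u_1,P_{N_2}u_2)\|_{L_{T,x}^2}^2=\sum_l\|\La_a(P_{N_1}u_1,P_{N_2}u_2)\|_{L_{I_l,x}^2}^2$. Proposition \ref{prop_bistri3} should provide, on each $I_l$, a bound of the shape (with $G=G(K)$)
\[\|\La_a(P_{N_1}u_1,P_{N_2}u_2)\|_{L_{I_l,x}^2}\lesssim G\,|I_l|^{-1/2}N_1^{-\al/2}N_2^{-s}\Big(\|P_{N_1}u_1\|_{L_{I_l,x}^2}+|I_l|^{1/2}N_1^{-s-\e}\|u_1\|_{L_T^\I H_x^s}\Big)\|u_2\|_{L_T^\I H_x^s}.\]
The crucial feature is the exponent $N_1^{-\al/2}$, to be compared with the $N_1^{-(\al-1)/2}$ that \eqref{bil2} alone would yield on such a short interval: the extra $N_1^{-1/2}$ is precisely the gain \eqref{cds}, obtained by controlling the Duhamel term $\int_{t_l}^{t}U_\al(t-t')\p_x P_{N_1}f(u_1)(t')\,dt'$ recursively, which itself rests on \eqref{tata5}, the commutator estimate \eqref{eq_comm2}, and the analyticity bound \eqref{eq2.2ana}. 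Squaring, summing over $l$ and using $\sum_l\|P_{N_1}u_1\|_{L_{I_l,x}^2}^2=\|P_{N_1}u_1\|_{L_{T,x}^2}^2$ together with $\sum_l|I_l|=T$, one is led to
\[\|\La_a(P_{N_1}u_1,P_{N_2}u_2)\|_{L_{T,x}^2}\lesssim G\,T^{-1/2}N_1^{-(\al-\gamma)/2}N_2^{-s}\Big(T^{-1/2}\|P_{N_1}u_1\|_{L_{T,x}^2}+N_1^{-s-\e}\|u_1\|_{L_T^\I H_x^s}\Big)\|u_2\|_{L_T^\I H_x^s},\]
which is the $\theta=0$ case (in fact one gains an extra power of $T$; I would keep $T^{-1/2}$ only to state all propositions uniformly). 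In other words the per-interval gain $N_1^{-1/2}$ is diluted by the $\sqrt{\#\{I_l\}}\sim N_1^{\gamma/2}$ time slices into the net improvement $N_1^{-(1-\gamma)/2}$ over the bare bilinear Strichartz estimate.

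The main obstacle is entirely concentrated in Proposition \ref{prop_bistri3}: the loss of one derivative in $\p_x P_{N_1}f(u_1)$ can be compensated only because one is allowed to work on intervals as long as $N_1^{-\gamma}$ with $\gamma=\max\{2-\al,1-\al/3\}<1$ for $\al>1$. Its proof is the delicate step and proceeds, as sketched in Subsection \ref{sub13}, by splitting $\p_x P_{N_1}(f(u_1))$ into the four pieces displayed there (treating the commutator piece via Lemma \ref{lem_comm2}) and a bootstrap producing \eqref{cds}, legitimate once $N_1^{1-\al/2-\gamma/2}\ll1$ — whence the restriction $\gamma>2-\al$, the endpoint $\gamma=2-\al$ being recovered by the choice $|I_l|\sim TN_1^{-\gamma}$ with $T$ small. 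Granting this, the time-slicing, the $\ell^2$ summation and the interpolation in $\theta$ above are routine; one should only be mildly careful that, since $u_2$ as well solves \eqref{eq1}, its low-frequency factor $P_{N_2}u_2$ is used merely through its $L_T^\I H_x^s$ bound and, where needed, through \eqref{tata5} at the frequency $N_2$ (admissible because $|I_l|\lesssim N_2^{-\gamma}$).
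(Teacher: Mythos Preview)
Your plan is essentially the paper's: the paper's entire proof reads ``identical to that of Proposition \ref{prop_bistri}, with \eqref{cor1} replaced by \eqref{cor2}'' (the paper has a typo swapping the two labels), i.e., redo the time-slicing and Duhamel expansion of Proposition \ref{prop_bistri} and, wherever the free--free bilinear bound \eqref{cor1} was invoked (e.g.\ in \eqref{eq3.23}, \eqref{eq3.24}), use the separated-frequency version \eqref{cor2} instead. Your interpolation to the endpoints $\theta\in\{0,1\}$ is a harmless variant; the paper proves the estimate for general $\theta$ directly by using \eqref{cor2} with that value of $\theta$.

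Two inaccuracies in your sketch are worth fixing. First, the substitution is \eqref{cor1}$\to$\eqref{cor2} at the level of the free bilinear bound, \emph{not} \eqref{stri_2}$\to$\eqref{bil2}: the dual Strichartz estimate \eqref{stri_2} is still needed, exactly as in \eqref{eq3.24}, to control $\bigl\|\int U_\al(-t')\partial_x P_{\sim N_1}h\,dt'\bigr\|_{L^2_x}$ in the Duhamel pieces. Second, Proposition \ref{prop_bistri3} does \emph{not} directly give your claimed per-interval bound for $\La_a(P_{N_1}u_1,P_{N_2}u_2)$: as stated it only controls $P_N u\,P_{\le 2^{-5}N}u$ for a \emph{single} solution with $a\equiv 1$. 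What actually happens on each $I_l$ is the four-term Duhamel expansion of the proof of Proposition \ref{prop_bistri}; the free--free piece is handled by \eqref{cor2}, and Proposition \ref{prop_bistri3} enters only as an ingredient to bound the $V_1$ piece of the decomposed Duhamel term $\partial_xP_{N_1}\tilde f(u_1)$ (and analogously for $u_2$, applied with $M=N_1\ge N=N_2$). Once you phrase your $\theta=0$ step this way, your argument coincides with the paper's.
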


The rest of this section is devoted to the proof of Propositions \ref{prop_bistri}--\ref{prop_bistri4}.
We begin by recalling the two bilinear estimates \eqref{bil1} and \eqref{bil2} in Proposition \ref{prop_bil} below.
On one hand, \eqref{eq_bistri2} is based on \eqref{bil2}, and actually its proof is essentially done in Proposition 3.2 of \cite{MT3}.
On the other hand, \eqref{eq_bistri1} relies on both \eqref{bil1} and \eqref{bil2}.
Note that, in sharp contrast to \eqref{eq_bistri2}, we need to take the structure of the nonlinearity into account, and thus we restrict ourselves to nonlinearities that satisfy Hypothesis \ref{hyp2} in \eqref{eq_bistri1}.

\begin{prop}\label{prop_bil}
 Let $\al\in[1,2]$ and $a\in L^\I(\R^2)$ with $ \|a\|_{L^\I} \lesssim 1$.
  Then, there exists $C=C(\xi_0)>0$ such that for any real-valued functions $u_1,u_2 \in L^2(\R^2)$, any $N_1,N_2\in 2^\N \cup\{0\}$ and $L_1,L_2\ge 1$, it holds
  \EQS{\label{bil1}
    \begin{aligned}
      &\|\La_a(Q_{L_1} P_{N_1} u_1, Q_{L_2} P_{N_2} u_2)\|_{L_{t,x}^2}\\
      &\le \frac{C(L_1\wedge L_2)^{\frac 12}(L_1\vee L_2)^{\frac 14}}{\LR{N_1\vee N_2}^{\frac{\al-1}{4}}}
        \|Q_{L_1} P_{N_1} u_1\|_{L_{t,x}^2}\|Q_{L_2} P_{N_2} u_2\|_{L_{t,x}^2}.
    \end{aligned}
  }
  Moreover,
  \EQS{\label{bil2}
    \begin{aligned}
      &\|\La_a(Q_{L_1} P_{N_1} u_1,
        Q_{L_2}P_{N_2}u_2)\|_{L_{t,x}^2}\\
      &\le C(L_1\wedge L_2)^{\frac 12}
      \min \left(\frac{(L_1\vee L_2)^{\frac 12}}{(N_1\vee N_2)^{\frac{\al}{2}}}, \LR{N_1\wedge N_2}^{\frac 12} \right)\\
      &\quad\times
        \|Q_{L_1}P_{N_1}u_1\|_{L_{t,x}^2}
        \|Q_{L_2}P_{N_2}u_2\|_{L_{t,x}^2}.
    \end{aligned}
  }
 whenever $ N_1\vee N_2 \ge 8 \LR{N_1\wedge N_2}$.
 \end{prop}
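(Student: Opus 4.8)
\emph{Proof strategy.} The plan is to pass to the space--time Fourier side and reduce both inequalities to measure estimates for the set where the quadratic resonance function is small, following the classical scheme going back to Bourgain and Kenig--Ponce--Vega. Set $g_j := |\F(Q_{L_j}P_{N_j}u_j)|$, so that $g_j\ge 0$, $\|g_j\|_{L^2_{\tau,\xi}} = \|Q_{L_j}P_{N_j}u_j\|_{L^2_{t,x}}$, and $g_j$ is supported in $R_j := \{(\tau_j,\xi_j) : |\xi_j| \in \supp\phi_{N_j},\ \tau_j - p_{\al+1}(\xi_j) \in \supp\phi_{L_j}\}$. Since $\|a\|_{L^\I}\lesssim 1$, Plancherel gives $\|\La_a(Q_{L_1}P_{N_1}u_1,Q_{L_2}P_{N_2}u_2)\|_{L^2_{t,x}} \lesssim \|g_1 * g_2\|_{L^2_{\tau,\xi}}$, where $*$ denotes the full space--time convolution. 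By Cauchy--Schwarz, $|(g_1*g_2)(\tau,\xi)|^2 \leq |A_{\tau,\xi}|\int_{A_{\tau,\xi}} g_1^2\,g_2^2$ with $A_{\tau,\xi} := \{(\tau_1,\xi_1): (\tau_1,\xi_1)\in R_1,\ (\tau-\tau_1,\xi-\xi_1)\in R_2\}$; integrating out $\tau_1$ (an interval of length $\lesssim L_1\wedge L_2$) and using that membership in $R_1$ and $R_2$ forces $|\tau - \Phi(\xi_1)| \lesssim L_1\vee L_2$, where
\[
  \Phi(\xi_1) := p_{\al+1}(\xi_1) + p_{\al+1}(\xi-\xi_1),
\]
one obtains $|A_{\tau,\xi}| \lesssim (L_1\wedge L_2)\,|S_{\tau,\xi}|$ with $S_{\tau,\xi} := \{\xi_1 : |\xi_1|\in\supp\phi_{N_1},\ |\xi-\xi_1|\in\supp\phi_{N_2},\ |\Phi(\xi_1)-\tau|\lesssim L_1\vee L_2\}$. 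Thus everything reduces to estimating (weighted averages of) $|S_{\tau,\xi}|$.

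For \eqref{bil2} we bound $|S_{\tau,\xi}|$ in two ways and take the minimum. First, trivially $|S_{\tau,\xi}| \leq |\{\xi_1 : |\xi_1|\in\supp\phi_{N_1},\ |\xi-\xi_1|\in\supp\phi_{N_2}\}| \lesssim \LR{N_1\wedge N_2}$, which via $\|g_1*g_2\|_2^2 \leq (L_1\wedge L_2)(\sup_{\tau,\xi}|S_{\tau,\xi}|)\|g_1\|_2^2\|g_2\|_2^2$ yields the second term in the minimum. Second, the hypothesis $N_1\vee N_2 \geq 8\LR{N_1\wedge N_2}$ forces $|\xi_1|$ and $|\xi-\xi_1|$ to have very different sizes on $S_{\tau,\xi}$; Hypothesis \ref{hyp1} (that is, $p_{\al+1}'(\zeta)\sim\zeta^\al$ for $\zeta\geq\xi_0$, together with the oddness of $p_{\al+1}$ and a separate, harmless treatment of the bounded--frequency regime, absorbed into $C(\xi_0)$) then gives $|\Phi'(\xi_1)| = |p_{\al+1}'(\xi_1) - p_{\al+1}'(\xi-\xi_1)| \gtrsim (N_1\vee N_2)^\al$ there, and since $p_{\al+1}'$ is monotone on each half-line, $\Phi$ is strictly monotone on each of a bounded number of subintervals of $S_{\tau,\xi}$; hence $|S_{\tau,\xi}| \lesssim (L_1\vee L_2)(N_1\vee N_2)^{-\al}$, which produces the first term.

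For \eqref{bil1} there is no separation hypothesis. If $N_1\vee N_2 \geq 8\LR{N_1\wedge N_2}$, then \eqref{bil1} is already a consequence of \eqref{bil2}: comparing the two right--hand sides amounts to checking $\min\bigl((L_1\vee L_2)^{1/2}(N_1\vee N_2)^{-\al/2},\ \LR{N_1\wedge N_2}^{1/2}\bigr) \lesssim (L_1\vee L_2)^{1/4}\LR{N_1\vee N_2}^{-(\al-1)/4}$, which follows by distinguishing whether $L_1\vee L_2$ is $\leq (N_1\vee N_2)^{\al+1}$ or not. So we may assume $N_1\sim N_2 =: M$, where $\Phi'$ may vanish, and argue via $\Phi''(\xi_1) = p_{\al+1}''(\xi_1) + p_{\al+1}''(\xi-\xi_1)$. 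On the part of $S_{\tau,\xi}$ where $\xi_1$ and $\xi-\xi_1$ have the same sign (which forces $|\xi|\sim M$), the two terms reinforce and Hypothesis \ref{hyp1} gives $|\Phi''|\gtrsim M^{\al-1}$; a standard second--derivative sublevel set estimate then yields $|S_{\tau,\xi}| \lesssim \bigl((L_1\vee L_2)M^{-(\al-1)}\bigr)^{1/2}$, and inserting this above gives exactly the right--hand side of \eqref{bil1}. On the part where $\xi_1$ and $\xi-\xi_1$ have opposite signs we decompose $g_1*g_2$ according to the dyadic size $N_3 := |\xi_1+\xi_2|$ of the output frequency — an exact Littlewood--Paley decomposition in $\xi$, hence $L^2_{\tau,\xi}$--orthogonal. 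On the $N_3$--piece one has $|\Phi'(\xi_1)|\sim N_3 M^{\al-1}$, hence $|S_{\tau,\xi}| \lesssim \min\bigl(M,\ (L_1\vee L_2)N_3^{-1}M^{-(\al-1)}\bigr)$; summing the resulting contributions over $N_3$ against the orthogonal masses $\int g_1^2 g_2^2\,\1_{|\xi_1+\xi_2|\sim N_3}$ (whose sum over $N_3$ is $\lesssim\|g_1\|_2^2\|g_2\|_2^2$), and keeping track of the length of the $\tau$--set on which $S_{\tau,\xi}\neq\emptyset$ (which is also governed by $N_3$ and forces the apparently lossy bound $|S_{\tau,\xi}|\lesssim M$ to act only on a correspondingly short range of $\tau$), one recovers $\|g_1*g_2\|_2^2 \lesssim (L_1\wedge L_2)(L_1\vee L_2)^{1/2}M^{-(\al-1)/2}\|g_1\|_2^2\|g_2\|_2^2$, as required. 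The real--valuedness of $u_1,u_2$ is used here, as in the references, to handle the most resonant configurations.

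The main obstacle is precisely this last sub--case of \eqref{bil1}: comparable frequencies, opposite signs, and output frequency much smaller than $M$. There both $\Phi'$ and $\Phi''$ degenerate, the crude bound $\|g_1*g_2\|_2^2 \lesssim (L_1\wedge L_2)(\sup_{\tau,\xi}|S_{\tau,\xi}|)\|g_1\|_2^2\|g_2\|_2^2$ fails to give the $M^{-(\al-1)/4}$ gain, and one must genuinely exploit the time--space integration — through the output--frequency orthogonality together with a careful accounting of both $|S_{\tau,\xi}|$ and the measure of admissible $\tau$ — in order to close the estimate. The remaining ingredients are routine dyadic bookkeeping based on Hypothesis \ref{hyp1}.
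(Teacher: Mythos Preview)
Your treatment of \eqref{bil2} and the reduction of \eqref{bil1} to the case $N_1\sim N_2=:M$ (via the geometric--mean observation $\min(a,b)\le\sqrt{ab}$ applied to the two terms in \eqref{bil2}) are correct, as is the second--derivative sublevel argument for the same--sign sub--case of \eqref{bil1}. The gap is precisely where you flag it: the opposite--sign sub--case with $N_3\ll M$. There $|\Phi'|\sim N_3 M^{\al-1}$ and $|\Phi''|\sim N_3 M^{\al-2}$ both degenerate, and the bound $|S_{\tau,\xi}|\lesssim\min\bigl(M,(L_1\vee L_2)/(N_3 M^{\al-1})\bigr)$ does \emph{not} give the required $(L_1\vee L_2)^{1/2}M^{-(\al-1)/2}$ uniformly in $N_3$ (take $N_3\sim 1$). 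Your proposed rescue --- output--frequency orthogonality plus a ``$\tau$--range'' accounting --- does not close: once you apply Cauchy--Schwarz in $(\tau_1,\xi_1)$ you have already committed to $\sup_{\tau}|S_{\tau,\xi}|$, and the subsequent $\tau$--integration is constrained only by the $L_2$--modulation window (length $\sim L_2$), not by the range of $\Phi$. Orthogonality over $N_3$ returns $\|g_1\|_2^2\|g_2\|_2^2$ but does nothing to repair the bad constant at small $N_3$. Real--valuedness of $u_1,u_2$ does not help here either.

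The clean way through --- and the one underlying the reference to \cite{MT3} in the paper --- is to abandon the measure estimate for \eqref{bil1} when $N_1\sim N_2$ and instead use the linear Strichartz estimate \eqref{stri_1}. After reducing to $\|F_1F_2\|_{L^2_{t,x}}$ with $\tilde F_j=|\tilde{(Q_{L_j}P_{N_j}u_j)}|$, assume WLOG $L_1\le L_2$ and apply H\"older $L^4_tL^\infty_x\times L^4_tL^2_x$: the transfer--principle argument gives $\|F_1\|_{L^4_tL^\infty_x}\lesssim N_1^{-(\al-1)/4}L_1^{1/2}\|F_1\|_{L^2}$, while Bernstein in the modulation variable gives $\|F_2\|_{L^4_tL^2_x}\lesssim L_2^{1/4}\|F_2\|_{L^2}$. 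This yields exactly $(L_1\wedge L_2)^{1/2}(L_1\vee L_2)^{1/4}M^{-(\al-1)/4}$, with no sign restriction on $\xi_1,\xi_2$ and no need to decompose by $N_3$. Combined with your (correct) reduction via \eqref{bil2} for separated frequencies, this proves \eqref{bil1} in full.
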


\begin{proof}
  The proof is identical with that of Proposition 3.3 in \cite{MT3}.
\end{proof}

\begin{rem}
  In \cite{MT3}, it was crucial to borrow some estimates from the work of Ionescu-Kenig-Tataru \cite{IKT}, i.e., we used the following estimate:
  \EQQS{
   \|\chi(L t)u\|_{L_{t,x}^2}
   \le C L^{-\frac 12}\|u\|_{X^{0,\frac 12,1}}.
  }
  Here, $X^{0,\frac 12,1}$ is the Bourgain space of Besov type, and $\chi$ is defined in \eqref{defchi}.
  Restricting the estimate on a very short time interval (of length $\sim L^{-1}$), one can handle an unfavorable term that arises when working on the torus.
  This difficulty comes from the fact that one single point has positive measure in $\Z$.
  In this work, we do not need such an estimate since we work on the real line.
\end{rem}

\begin{lem}\label{lem_short2}
  Let $\al\in[1,2]$, $N_1,N_2\in 2^\N\cup\{0\}$.
  Assume that $a\in L^\I(\R^2)$ satisfies $\|a\|_{L^\I}\lesssim 1$.
  Then, it holds that for $u_1,u_2\in X^{0,\frac{3}{8}}$
  \EQS{\label{bil3}
    \|\La_a(P_{N_1}u_1, P_{N_2}u_2)\|_{L_{t,x}^2}
    \lesssim \LR{N_1\vee N_2}^{-\frac{\al-1}{4}}
    \|P_{N_1}u_1\|_{X^{0,\frac{3}{8}}}
    \|P_{N_1}u_1\|_{X^{0,\frac{3}{8}}}.
  }
  Moreover, it holds that for $\theta\in(0,1]$ and $u_1,u_2\in X^{0,\frac{1}{2}-\frac{\theta}{4}}$
  \EQS{\label{bil4}
    \begin{aligned}
      &\|\La_a(P_{N_1}u_1, P_{N_2}u_2)\|_{L_{t,x}^2}\\
      &\lesssim (N_1\vee N_2)^{-\frac{(1-\theta)\al}{2}}
      \LR{N_1\wedge N_2}^{\frac{\theta}{2}}
       \|P_{N_1}u_1\|_{X^{0,\frac{1}{2}-\frac{\theta}{4}}}
       \|P_{N_2}u_2\|_{X^{0,\frac{1}{2}-\frac{\theta}{4}}}
    \end{aligned}
  }
  whenever $ N_1\vee N_2 \ge 8\LR{N_1\wedge N_2}$.
  In the above estimates, the implicit constants do not depend on $u_1,u_2,N_1$ and $N_2$.
\end{lem}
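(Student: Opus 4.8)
The plan is to reduce both estimates to the fixed-modulation bilinear bounds of Proposition~\ref{prop_bil} by a Littlewood--Paley decomposition in the modulation variable, followed by a summation of the resulting double series over the modulations via Schur's test. Concretely, I would write $P_{N_j}u_j=\sum_{L_j}Q_{L_j}P_{N_j}u_j$, where $L_j$ runs over non-homogeneous dyadic numbers, and use the triangle inequality to reduce to estimating $\|\La_a(Q_{L_1}P_{N_1}u_1,Q_{L_2}P_{N_2}u_2)\|_{L_{t,x}^2}$ and then summing in $L_1,L_2$.

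For \eqref{bil3} I would bound each piece by \eqref{bil1}. Since $\tfrac12+\tfrac14=2\cdot\tfrac38$, one has, for $L_1,L_2\ge1$, $(L_1\wedge L_2)^{1/2}(L_1\vee L_2)^{1/4}=(L_1L_2)^{3/8}\bigl(\tfrac{L_1\wedge L_2}{L_1\vee L_2}\bigr)^{1/8}$, so with $a_L=\LR{L}^{3/8}\|Q_LP_{N_1}u_1\|_{L_{t,x}^2}$ and $b_L=\LR{L}^{3/8}\|Q_LP_{N_2}u_2\|_{L_{t,x}^2}$ the double sum is dominated by $\sum_{L_1,L_2}\min(L_1/L_2,L_2/L_1)^{1/8}a_{L_1}b_{L_2}$. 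Passing to logarithmic variables, this is a convolution against the $\ell^1$ kernel $2^{-|\cdot|/8}$, so Schur's test (equivalently Young's inequality) yields $\LR{N_1\vee N_2}^{-(\al-1)/4}\|P_{N_1}u_1\|_{X^{0,3/8}}\|P_{N_2}u_2\|_{X^{0,3/8}}$, the frequency factor being exactly the one furnished by \eqref{bil1}.

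For \eqref{bil4}, under the separation hypothesis $N_1\vee N_2\ge8\LR{N_1\wedge N_2}$ I would use \eqref{bil2} and interpolate the minimum with $\min(A,B)\le A^{1-\theta}B^\theta$, $\theta\in(0,1]$:
\EQQS{
\min\Bigl(\frac{(L_1\vee L_2)^{1/2}}{(N_1\vee N_2)^{\al/2}},\LR{N_1\wedge N_2}^{1/2}\Bigr)
\le\frac{(L_1\vee L_2)^{(1-\theta)/2}}{(N_1\vee N_2)^{(1-\theta)\al/2}}\LR{N_1\wedge N_2}^{\theta/2}.
}
This produces the claimed frequency prefactor $(N_1\vee N_2)^{-(1-\theta)\al/2}\LR{N_1\wedge N_2}^{\theta/2}$ times $(L_1\wedge L_2)^{1/2}(L_1\vee L_2)^{(1-\theta)/2}$ in the modulations. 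Comparing with the weight $\LR{L}^{1/2-\theta/4}$ of $X^{0,1/2-\theta/4}$, for $L_1\le L_2$ the residual factor is $(L_1/L_2)^{\theta/4}$, so the modulation kernel is $\min(L_1/L_2,L_2/L_1)^{\theta/4}$, which is summable precisely because $\theta>0$, and Schur's test closes the estimate as before. The small-modulation endpoints $L_j=0$ (where $\LR{L_j}\sim1$) are harmless: the proof of Proposition~\ref{prop_bil} yields the same bounds with $L_j$ replaced by $\LR{L_j}$, or one simply absorbs $Q_0$ into $Q_{\le1}$.

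The only delicate point is the convergence of the double modulation sum, and this is exactly what dictates the exponents appearing in the statement: the value $b=3/8$ in \eqref{bil3} is the midpoint of the exponents $\tfrac12,\tfrac14$ produced by \eqref{bil1}, and the restriction $\theta>0$ in \eqref{bil4} is what guarantees a strictly positive power on the residual factor, making it $\ell^1$ in the difference of logarithms. Apart from this, the whole argument is a direct substitution into Proposition~\ref{prop_bil} together with the elementary inequality $\min(A,B)\le A^{1-\theta}B^\theta$.
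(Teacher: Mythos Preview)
Your argument is correct and is essentially the same as the paper's: both decompose in the modulation variable, apply Proposition~\ref{prop_bil}, rewrite $(L_1\wedge L_2)^{1/2}(L_1\vee L_2)^{1/4}=(L_1L_2)^{3/8}\bigl(\tfrac{L_1\wedge L_2}{L_1\vee L_2}\bigr)^{1/8}$ (and analogously for \eqref{bil4} via $\min(A,B)\le A^{1-\theta}B^\theta$), and then sum the resulting $\ell^1$ kernel in $|\log(L_1/L_2)|$ by Young/Schur.
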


\begin{proof}
  Without loss of generality, we may assume that $N_1\ge N_2$.
  By \eqref{bil1} and decomposing functions $u_1$ and $u_2$ with respect to the modulation, we have
  \EQQS{
    \|\La_a(P_{N_1}u_1, P_{N_2}u_2)\|_{L_{t,x}^2}
    &\le \sum_{L_1,L_2\ge 1}
    \|\La_a(Q_{L_1}P_{N_1}u_1, Q_{L_2} P_{N_2}u_2)\|_{L_{t,x}^2}\\
    &\lesssim \sum_{L_1\ge L_2\ge 1}
    \|\La_a(Q_{L_1}P_{N_1}u_1, Q_{L_2} P_{N_2}u_2)\|_{L_{t,x}^2}\\
    &\lesssim \LR{N_1}^{-\frac{\al-1}{4}}
    \sum_{L_1\ge L_2\ge 1}
    L_1^{\frac{1}{4}}L_2^{\frac{1}{2}}
    \|Q_{L_1}P_{N_1}u_1\|_{L_{t,x}^2}
    \|Q_{L_2}P_{N_2}u_2\|_{L_{t,x}^2}\\
    &\lesssim \LR{N_1}^{-\frac{\al-1}{4}}
    \sum_{L_1\ge L_2\ge 1}\bigg(\frac{L_2}{L_1}\bigg)^{\frac{1}{8}}
    \|Q_{L_1}P_{N_1}u_1\|_{X^{0,\frac{3}{8}}}\|Q_{L_2}P_{N_2}u_2\|_{X^{0,\frac{3}{8}}}\\
    &\lesssim \LR{N_1}^{-\frac{\al-1}{4}}
    \|P_{N_1}u_1\|_{X^{0,\frac{3}{8}}}
    \|P_{N_2}u_2\|_{X^{0,\frac{3}{8}}}.
  }
  Here, in the last inequality, we used the Young inequality.
  Finally the proof of \eqref{bil4}
   is similar to the one of \eqref{bil3} by using \eqref{bil2} instead of \eqref{bil1} and the fact that $\min\{a,b\}\le a^{1-\theta}b^\theta$ for $a,b\ge 0$.
\end{proof}

In the proof of Proposition \ref{prop_bistri}, we would like to take $\theta=0$ in \eqref{bil4}.
For that purpose, we work on Besov type Bourgain spaces $X^{0,\frac{1}{2},1}$ with \eqref{bil2}.

\begin{lem}\label{lem_short3}
  Let $\al\in[1,2]$ and $N_1\vee N_2\ge 8\LR{N_1\wedge N_2}$.
  Assume that $a\in L^\I(\R^2)$ satisfies $\|a\|_{L^\I}\lesssim 1$.
  Then, it holds that for $u_1,u_2\in X^{0,\frac{1}{2},1}$
  \EQS{\label{bil5}
    \|\La_a(P_{N_1}u_1, P_{N_2}u_2)\|_{L_{t,x}^2}
    \lesssim (N_1\vee N_2)^{-\frac{\al}{2}}
    \|P_{N_1}u_1\|_{X^{0,\frac{1}{2},1}}
    \|P_{N_1}u_1\|_{X^{0,\frac{1}{2},1}},
  }
  where the implicit constant does not depend on $u_1,u_2,N_1$ and $N_2$.
\end{lem}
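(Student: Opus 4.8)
The plan is to run the proof of Lemma \ref{lem_short2} almost verbatim, but to work in the Besov-type space $X^{0,\frac12,1}$ instead of an $X^{0,b}$ space: the point is that the $\ell^1$ summation over modulation blocks built into $X^{0,\frac12,1}$ exactly compensates the factor $(L_1\vee L_2)^{\frac12}$ produced by the first term in the minimum of \eqref{bil2}, so that one reaches the endpoint $\theta=0$ of \eqref{bil4} without the logarithmic loss one would otherwise incur. Without loss of generality assume $N_1\ge N_2$, so that $N_1\vee N_2=N_1$ and the hypothesis reads $N_1\ge 8\LR{N_2}$, which is precisely the range in which \eqref{bil2} holds. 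Decompose each factor into modulation blocks, $P_{N_j}u_j=\sum_{L_j\in 2^\N\cup\{0\}}Q_{L_j}P_{N_j}u_j$, and use the triangle inequality to reduce matters to estimating $\sum_{L_1,L_2}\|\La_a(Q_{L_1}P_{N_1}u_1,Q_{L_2}P_{N_2}u_2)\|_{L_{t,x}^2}$.

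For a pair $(L_1,L_2)$ with $L_1,L_2\ge 1$, I would apply \eqref{bil2} and keep only the first term inside the minimum. Since $N_1\vee N_2=N_1$ and $(L_1\wedge L_2)^{\frac12}(L_1\vee L_2)^{\frac12}=(L_1L_2)^{\frac12}$, this yields
\[
  \|\La_a(Q_{L_1}P_{N_1}u_1,Q_{L_2}P_{N_2}u_2)\|_{L_{t,x}^2}
  \les N_1^{-\frac{\al}{2}}(L_1L_2)^{\frac12}\,\|Q_{L_1}P_{N_1}u_1\|_{L_{t,x}^2}\,\|Q_{L_2}P_{N_2}u_2\|_{L_{t,x}^2}.
\]
Summing over $L_1,L_2\ge 1$, the double sum factors, and each single sum is dominated by the $X^{0,\frac12,1}$ norm of the corresponding factor (for $L\ge1$ the Besov weight is $(1\vee L)^{\frac12}=L^{\frac12}$); hence the contribution of these blocks is $\les N_1^{-\frac{\al}{2}}\|P_{N_1}u_1\|_{X^{0,\frac12,1}}\|P_{N_2}u_2\|_{X^{0,\frac12,1}}$, which is the asserted bound with $N_1=N_1\vee N_2$.

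It then remains to incorporate the lowest modulation block $L_j=0$. Since $Q_0$ localizes the space-time Fourier transform to $\{|\ta-p_{\al+1}(\xi)|\le 1\}$, one may treat $Q_0P_{N_j}u_j$ as a modulation-$1$ block: the estimate \eqref{bil2} (whose proof only uses that the modulations involved are $\gtrsim 1$) applies with $L_j$ replaced by $1$, which is consistent with the weight $1\vee L$ in the definition of $X^{0,\frac12,1}$. Combining the (at most four) resulting cases gives the lemma. I expect no genuine obstacle here; the only substantive point — and the reason for passing to $X^{0,\frac12,1}$ — is that the $\ell^1$ summation over modulations renders the factor $(L_1L_2)^{\frac12}$ harmless, whereas in \eqref{bil4} a strictly positive power $\theta$ of $\LR{N_1\wedge N_2}$ had to be spent to make the modulation sum converge.
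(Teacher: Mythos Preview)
Your proposal is correct and is precisely the argument the paper has in mind: the paper's own proof reads ``identical to that of Lemma \ref{lem_short2}'', and your write-up makes explicit that one keeps the first term in the minimum of \eqref{bil2} so that $(L_1\wedge L_2)^{1/2}(L_1\vee L_2)^{1/2}=(L_1L_2)^{1/2}$ is absorbed by the $\ell^1$-summation defining $X^{0,\frac12,1}$. Your handling of the $L_j=0$ block is also fine and matches the conventions of the paper.
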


\begin{proof}
  The proof is identical to that of Lemma \ref{lem_short2}, and is therefore omitted.
\end{proof}

Let us now translate \eqref{bil3}, \eqref{bil4} and \eqref{bil5} in terms of bilinear estimates for free solutions of \eqref{eq1}.

\begin{cor}\label{cor_free1}
 Let $a\in L^\I(\R^2)$ be such that $\|a\|_{L^\I}\lesssim 1$, $\ga\in(0,1]$, $ \alpha\in [1,2] $, $N_1,N_2\in 2^\N\cup\{0\}$, $M\ge 1 $, $0<T<1$ and $\vp_1,\vp_2\in L^2(\R)$.
 Suppose that $I\subset \R$ satisfies $|I|\sim M^{-\ga}T$.
 Then it holds
  \EQS{\label{cor1}
    \begin{aligned}
      &\|\La_a(U_\al(t)P_{N_1}\vp_1,
        U_\al(t)P_{N_2}\vp_2 )\|_{L^2(I;L^2)}\\
      &\lesssim T^{\frac 14} M^{-\frac{\ga}{4}}
       \LR{N_1\vee N_2}^{-\frac{\al-1}{4}}
       \|P_{N_1}\vp_1\|_{L^2}\|P_{N_2}\vp_2\|_{L^2}.
    \end{aligned}
  }
  Moreover, for $\theta\in [0,1]$, if $ N_1\vee N_2 \ge 8\LR{N_1\wedge N_2} $, \eqref{cor1} can be improved to
  \EQS{\label{cor2}
    \begin{aligned}
      &\|\La_a(U_\al(t)P_{N_1}\vp_1,
        U_\al(t)P_{N_2}\vp_2)
        \|_{L^2(I;L^2)}\\
      &\lesssim T^{\frac{\theta}{2}} M^{-\frac{\te\ga}{2}} (N_1\vee N_2)^{-\frac{(1-\theta)\al}{2}}
       \LR{N_1\wedge N_2}^{\frac{\theta}{2}}
       \|P_{N_1}\vp_1\|_{L^2}\|P_{N_2}\vp_2\|_{L^2}.
    \end{aligned}
  }
\end{cor}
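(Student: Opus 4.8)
The statement to prove is Corollary~\ref{cor_free1}, which translates the space-time bilinear estimates \eqref{bil3}, \eqref{bil4}, \eqref{bil5} for general functions into the corresponding estimates for linear solutions $U_\al(t)\vp_j$ restricted to a short time interval $I$ with $|I|\sim M^{-\ga}T$. The natural strategy is a standard localization-in-time argument: replace the free solution on the interval $I$ by a global space-time function obtained by multiplying by a smooth time cutoff adapted to $I$, then apply the fixed-time-independent bilinear estimates from Lemma~\ref{lem_short2} and Lemma~\ref{lem_short3}, and finally compute the $X^{0,b}$-norm (or $X^{0,\frac12,1}$-norm) of the time-localized free solution, which is where the factor $|I|^{\text{power}}\sim (M^{-\ga}T)^{\text{power}}$ appears.

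First I would fix, for the given interval $I$ of length $|I|\sim M^{-\ga}T$, a smooth cutoff $\zeta_I\in C_c^\infty(\R)$ with $\zeta_I\equiv 1$ on $I$, $\supp\zeta_I$ contained in an interval of comparable length, and the standard derivative bounds $\|\zeta_I^{(k)}\|_{L^\infty}\lesssim (M^{-\ga}T)^{-k}$. Set $v_j(t,x):=\zeta_I(t)\,U_\al(t)P_{N_j}\vp_j$. On $I$ one has $\La_a(U_\al(t)P_{N_1}\vp_1,U_\al(t)P_{N_2}\vp_2)=\La_a(P_{N_1}v_1,P_{N_2}v_2)$, so
\[
\|\La_a(U_\al(t)P_{N_1}\vp_1,U_\al(t)P_{N_2}\vp_2)\|_{L^2(I;L^2)}\le \|\La_a(P_{N_1}v_1,P_{N_2}v_2)\|_{L^2_{t,x}}.
\]
For \eqref{cor1} I apply \eqref{bil3}, reducing matters to bounding $\|P_{N_j}v_j\|_{X^{0,3/8}}$; for \eqref{cor2} I apply \eqref{bil4} with the same $\theta\in(0,1]$, needing $\|P_{N_j}v_j\|_{X^{0,\frac12-\frac\theta4}}$, and for the endpoint $\theta=0$ I instead use \eqref{bil5}, needing $\|P_{N_j}v_j\|_{X^{0,\frac12,1}}$.

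The remaining ingredient is the elementary lemma that for $b\in[0,\frac12]$ (with the obvious $X^{0,\frac12,1}$ variant at $b=\frac12$),
\[
\|\zeta_I(t)\,U_\al(t)P_N\vp\|_{X^{0,b}}\lesssim |I|^{\frac12-b}\,\|P_N\vp\|_{L^2},
\]
which follows because $\F_t[\zeta_I(t)e^{it\tau_0}](\tau)=\widehat{\zeta_I}(\tau-\tau_0)$, so after Plancherel in $x$ the $X^{0,b}$-norm becomes $\||\tau-p_{\al+1}(\xi)|^b\widehat{\zeta_I}(\tau-p_{\al+1}(\xi))\|_{L^2_\tau}\|P_N\vp\|_{L^2}$, and scaling $\widehat{\zeta_I}$ gives $\||\sigma|^b\widehat{\zeta_I}(\sigma)\|_{L^2_\sigma}\lesssim |I|^{\frac12-b}$ using $\|\langle\sigma/|I|^{-1}\rangle^b\widehat{\zeta_I}\|_{L^2}\lesssim |I|^{-1/2}|I|^{-b}\cdot|I|$ type bookkeeping; the Besov $X^{0,\frac12,1}$ case needs the standard observation that $\widehat{\zeta_I}$ has enough decay to make the dyadic $L$-sum converge with the same total power $|I|^0$. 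Plugging $b=3/8$ gives $|I|^{1/8}\sim T^{1/8}M^{-\ga/8}$ per factor, hence $T^{1/4}M^{-\ga/4}$ total, yielding \eqref{cor1}; plugging $b=\frac12-\frac\theta4$ gives $|I|^{\theta/4}$ per factor, total $|I|^{\theta/2}\sim T^{\theta/2}M^{-\theta\ga/2}$, and combining with the frequency gain $(N_1\vee N_2)^{-(1-\theta)\al/2}\LR{N_1\wedge N_2}^{\theta/2}$ from \eqref{bil4} (or with $(N_1\vee N_2)^{-\al/2}$ from \eqref{bil5} at $\theta=0$, where $|I|^0$) gives \eqref{cor2}.

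\textbf{Main obstacle.} The computations are routine; the only genuinely delicate point is the endpoint case $\theta=0$ in \eqref{cor2}, where an $L^2$-based $X^{0,1/2}$ bound on $\zeta_I U_\al(t)P_N\vp$ is false (the high-modulation tail of $\widehat{\zeta_I}$ is not in weighted $L^2$ with weight $|\sigma|$), which is precisely why one must pass to the Besov-type space $X^{0,\frac12,1}$ and invoke \eqref{bil5} rather than \eqref{bil4}. One has to check that $\|\zeta_I U_\al(t)P_N\vp\|_{X^{0,\frac12,1}}\lesssim \|P_N\vp\|_{L^2}$ with a constant \emph{uniform in $I$} (no $|I|$-dependence, so that $|I|^0$ is correct): this uses that $\widehat{\zeta_I}(\sigma)=|I|\widehat{\zeta}(|I|\sigma)$ for a fixed profile $\zeta$, so $\sum_{L}L^{1/2}\|\phi_L(\sigma)\widehat{\zeta_I}(\sigma)\|_{L^2_\sigma}=\sum_L L^{1/2}|I|^{1/2}\|\phi_L(\sigma)\widehat{\zeta}(|I|\sigma)\|_{L^2}$, and after rescaling $\sigma\mapsto\sigma/|I|$ this is $\sum_L (L|I|)^{1/2}\|\phi_{L|I|}\widehat\zeta\|_{L^2}\lesssim \sum_{\tilde L} \tilde L^{1/2}\|\phi_{\tilde L}\widehat\zeta\|_{L^2}<\infty$ because $\zeta$ is a fixed Schwartz function — so the sum is a finite constant independent of $I$. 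Once this uniformity is in hand, all three estimates follow by assembling the pieces as above.
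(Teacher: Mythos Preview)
Your proposal is correct and follows essentially the same route as the paper: time-localize the free solutions with a cutoff adapted to $I$, apply \eqref{bil3}, \eqref{bil4} for $\theta>0$, and \eqref{bil5} for the endpoint $\theta=0$, then read off the $|I|^{\frac12-b}$ (resp.\ $|I|^0$) gain from the $X^{0,b}$ (resp.\ $X^{0,\frac12,1}$) norm of the localized free solution. This is exactly what the paper does by citing Lemma~3.4 of \cite{MT22} and Corollary~3.8 of \cite{MT3}; your identification of the $\theta=0$ endpoint as the only delicate point, and the need to pass to the Besov space $X^{0,\frac12,1}$ there, matches the paper's treatment precisely.
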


\begin{proof}
  Following Lemma 3.4 in \cite{MT22}, \eqref{cor1} and \eqref{cor2} with $\theta>0$ follow from \eqref{bil3} and \eqref{bil4}, respectively.
  On the other hand, using the argument of Corollary 3.8 in \cite{MT3}, \eqref{cor2} with $\theta=0$ follows from \eqref{bil5}.
\end{proof}

Note that we always take $ M=N_1\vee N_2 $ when we apply directly  \eqref{cor1}  in the proof of Proposition \ref{prop_bistri}.
On the other hand, in the proof of Proposition \ref{prop_bistri}, we need to apply Proposition \ref{prop_bistri3} with $M\ge N$, which actually uses \eqref{cor2} with $M\ge N_1\vee N_2$.

\begin{rem}
  In the proof of Proposition \ref{prop_bistri3}, we also use a slightly modified version of \eqref{cor2}.
  Under the same assumption as in Corollary \ref{cor_free1}, we have
  \EQS{\label{cor2.2}
    \begin{aligned}
      &\|\La_a(U_\al(t)P_{N_1}\vp_1,
        U_\al(t)P_{\le N_2}\vp_2)
        \|_{L^2(I;L^2)}\\
      &\lesssim T^{\frac{\theta}{2}} M^{-\frac{\te\ga}{2}} (N_1\vee N_2)^{-\frac{(1-\theta)\al}{2}}
      \LR{N_1\wedge N_2}^{\frac{\theta}{2}}
       \|P_{N_1}\vp_1\|_{L^2}\|P_{\le N_2}\vp_2\|_{L^2},
    \end{aligned}
  }
  This estimate follows from the following inequality:
  \EQQS{
    \|\La_a(Q_{L_1} P_{N_1} u_1,
        Q_{L_2}P_{\le N_2}u_2)\|_{L_{t,x}^2}
    &\le C(L_1\wedge L_2)^{\frac 12}
      \min \left(\frac{(L_1\vee L_2)^{\frac 12}}{(N_1\vee N_2)^{\frac{\al}{2}}}, \LR{N_1\wedge N_2}^{\frac 12} \right)\\
    &\quad\times
        \|Q_{L_1}P_{N_1}u_1\|_{L_{t,x}^2}
        \|Q_{L_2}P_{\le N_2}u_2\|_{L_{t,x}^2},
  }
  which can be shown in the same manner as \eqref{bil2}.
\end{rem}

\begin{rem}
  In the proof of Proposition \ref{prop_bistri}, it is essential to use \eqref{cor1} and \eqref{cor2} with $\ga<1$, which allows us to use bilinear estimates iteratively.
  For the proof of Proposition \ref{prop_bistri2}, however, we only use \eqref{cor2} with $\ga=1$, since the gain of regularity is sufficient for our purpose (see also Proposition \ref{prop_bistri4}).
  In contrast, in the previous work \cite{MT3}, we always had $\ga=1$.
  It is also worth mentioning that we have a positive power of $T$ in both \eqref{cor1} and \eqref{cor2}.
  They are helpful in the proof of well-posedness.
\end{rem}

We will rely on Christ-Kiselev type lemmas (\cite{CK01}) in order to apply the Strichartz estimate in the proof of Proposition \ref{prop_bistri}.
For that purpose, we write the integral of a function $f$ with a kernel $K(t,s)$ as
\EQS{\label{def_I}
  \mathcal{I}_K(f;[t_0,t_2])(t)= \int_{t_0}^{t_2} K(t,s)f(s)ds.
}
The following version is taken from the proof of Theorem 1.21 in \cite{Tzv}.

\begin{lem}\label{lem_ck}
  Let $t_0, t_1,t_2\in\R$ with $t_1<t_2$ and $t_1\le t_0\le t_2$, and let $1\le p<q\le \I$.
  Let $X$ and $Y$ be Banach spaces and assume that $K(t,s)$ is a continuous function taking its values in $B(X,Y)$, the space of bounded linear operators from $X$ to $Y$.
  Assume that for some $C_1>0$ it holds for $f\in L^p([t_1,t_2];X)$,
  \EQQS{
    \|\mathcal{I}_K(f;[t_1,t_2])(t)\|_{L^q([t_1,t_2];Y)}
    \le C_1 \|f\|_{L^p([t_1,t_2];X)}.
  }
  Then, there exists $C_2>0$ depending only on $p$ such that for any $f\in L^p([t_1,t_2];X)$,
  \EQQS{
    \|\mathcal{I}_K(f;[t_0,t])(t)\|_{L^q([t_1,t_2];Y)}
    \le C_1C_2 \|f\|_{L^p([t_1,t_2];X)}.
  }
\end{lem}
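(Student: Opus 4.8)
The plan is to run the classical Christ--Kiselev argument, which upgrades the boundedness of the \emph{full} integral operator $f\mapsto\mathcal{I}_K(f;[t_1,t_2])$ to that of the retarded one $f\mapsto\mathcal{I}_K(f;[t_0,t])(t)$, the whole cost being a geometric series that converges precisely because $\tfrac1p-\tfrac1q>0$. First I would reduce to $t_0=t_1$: with the signed convention $\int_a^b=-\int_b^a$, one has, for every $t\in[t_1,t_2]$, the identity
\[
\mathcal{I}_K(f;[t_0,t])(t)=\mathcal{I}_K(f;[t_1,t])(t)-\mathcal{I}_K\big(f\mathbf{1}_{[t_1,t_0]};[t_1,t_2]\big)(t),
\]
in which the last term is the full operator evaluated at $f\mathbf{1}_{[t_1,t_0]}$, so by hypothesis its $L^q([t_1,t_2];Y)$-norm is at most $C_1\|f\mathbf{1}_{[t_1,t_0]}\|_{L^p}\le C_1\|f\|_{L^p([t_1,t_2];X)}$. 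Hence it suffices to bound $\|\mathcal{I}_K(f;[t_1,t])(t)\|_{L^q([t_1,t_2];Y)}$, and after normalizing $\|f\|_{L^p([t_1,t_2];X)}=1$ the goal becomes a bound of the form $C_1C_2$.

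\textbf{A Whitney decomposition of the lower triangle.} I would set $F(t):=\int_{t_1}^t\|f(s)\|_X^p\,ds$, a nondecreasing function with $F(t_1)=0$ and $F(t_2)=1$; after an elementary approximation handling jumps and intervals of constancy of $F$ by a limiting argument, one may assume $F$ is a homeomorphism of $[t_1,t_2]$ onto $[0,1]$. Bisecting $[t_1,t_2]$ recursively with respect to the measure $\|f(s)\|_X^p\,ds$ --- split into two halves of $F$-mass $\tfrac12$, retain the rectangle (left half as an $s$-interval)$\times$(right half as a $t$-interval), and recurse on the two sub-triangles --- produces, at each dyadic level $n\ge0$, a family of $2^n$ rectangles $J_{n,j}\times I_{n,j}$ such that the $I_{n,j}$ are pairwise disjoint, the $J_{n,j}$ are pairwise disjoint, each of $I_{n,j},J_{n,j}$ carries $F$-mass $2^{-n-1}$, the interval $J_{n,j}$ lies entirely to the left of $I_{n,j}$ (so that $s<t$ holds automatically on $J_{n,j}\times I_{n,j}$), and one has the pointwise identity $\mathbf{1}_{\{t_1\le s<t\le t_2\}}=\sum_{n\ge0}\sum_j\mathbf{1}_{J_{n,j}}(s)\,\mathbf{1}_{I_{n,j}}(t)$.

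\textbf{Summation.} Since on $I_{n,j}$ the truncation at $t$ does not cut into $J_{n,j}$, the last identity gives $\mathcal{I}_K(f;[t_1,t])(t)=\sum_{n\ge0}\sum_j\mathbf{1}_{I_{n,j}}(t)\,\mathcal{I}_K(f\mathbf{1}_{J_{n,j}};[t_1,t_2])(t)$ for $t\in[t_1,t_2]$. For fixed $n$, disjointness of the $I_{n,j}$ in $t$ together with the hypothesis applied to each $f\mathbf{1}_{J_{n,j}}$ yields
\begin{align*}
\Big\|\sum_j\mathbf{1}_{I_{n,j}}\,\mathcal{I}_K(f\mathbf{1}_{J_{n,j}};[t_1,t_2])\Big\|_{L^q([t_1,t_2];Y)}
&=\Big(\sum_j\big\|\mathcal{I}_K(f\mathbf{1}_{J_{n,j}};[t_1,t_2])\big\|_{L^q(I_{n,j};Y)}^q\Big)^{1/q}\\
&\le C_1\Big(\sum_j\|f\mathbf{1}_{J_{n,j}}\|_{L^p}^q\Big)^{1/q}.
\end{align*}
Because $\|f\mathbf{1}_{J_{n,j}}\|_{L^p}^p=2^{-n-1}$ and the sum over $j$ has $2^n$ terms, the right-hand side is $\le C_1\,2^{n/q}2^{-n/p}=C_1\,2^{-n(1/p-1/q)}$. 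Summing the triangle inequality over $n\ge0$ and using $p<q$, we obtain a bound by $C_1\big(1-2^{-(1/p-1/q)}\big)^{-1}=:C_1C_2$, with $C_2$ depending only on $p$ and $q$ (and only on $p$ when $q=\infty$, which is the case in our applications). This proves the lemma.

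\textbf{Main obstacle.} The only genuinely delicate point is the Whitney decomposition: arranging the rectangles so that, simultaneously, they tile $\{t_1\le s<t\le t_2\}$, at each level their $t$-slices are pairwise disjoint and their $s$-slices are pairwise disjoint, and every rectangle sits strictly below the diagonal --- together with the routine reduction to the case where $F$ is a homeomorphism. Once this is in place the remaining steps are bookkeeping, and the strict gap $\tfrac1p-\tfrac1q>0$ is exactly what makes the series converge. For the purposes of this paper it of course suffices to invoke the statement as proved in [\cite{Tzv}, proof of Theorem 1.21] or in \cite{CK01}.
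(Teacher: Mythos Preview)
Your argument is correct and is precisely the classical Christ--Kiselev Whitney-decomposition proof; the paper does not supply its own proof of this lemma but simply invokes the argument from \cite{Tzv}, Theorem~1.21, which is exactly what you have reproduced. One minor remark: your constant $C_2=1+\bigl(1-2^{-(1/p-1/q)}\bigr)^{-1}$ depends on both $p$ and $q$, not on $p$ alone as the lemma's statement claims, but this is immaterial for the applications in the paper (where $q$ is fixed once $p$ is).
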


We also state the bilinear versions of Lemma \ref{lem_ck}, which play a fundamental role in our analysis.
The reason we formulate these estimates using a symbol $a$ is that, we intend to apply integration by parts and the bilinear estimates simultaneously; see Case (3) in the proof of Proposition \ref{prop_apri1}.
These two results can be derived by the same argument as used in Lemma \ref{lem_ck}.

\begin{cor}\label{cor_CK1}
  Let $t_0,t_1,t_2\in\R$ with $t_1<t_2$ and $t_1 \le t_0\le t_2$, and let $1\le p<2$ and $1\le q\le \I$.
  Let $a\in L^\I(\R^2)$ satisfy $\|a\|_{L^\I}\les 1$.
  Assume that $K(t,s)$ is a continuous function taking values in $B(L^2)$, the space of bounded linear operators on $L^2$.
  Suppose that for some $C_1>0$, it holds for any $f\in L^{p}([t_1,t_2];L^{q})$ and $g\in L^2$ that
  \EQQS{
    \big\|\La_a\big(\mathcal{I}_K(f;[t_1,t_2])(t),K(t,0) g\big) \big\|_{L^2([t_1,t_2];L^2)}
    \le C_1\|f\|_{L^{p}([t_1,t_2];L^{q})}\|g\|_{L_x^2}.
  }
  Then, there exists $C_2>0$ depending only on $p$ such that it holds for any $f\in L^{p}([t_1,t_2];L^{q})$ and $g\in L^2$,
  \EQQS{
    \big\|\La_a\big(\mathcal{I}_K(f;[t_0,t])(t),K(t,0) g\big) \big\|_{L^2([t_1,t_2];L^2)}
    \le C_1C_2
     \|f\|_{L^{p}([t_1,t_2];L^{q})}\|g\|_{L_x^2}.
  }
\end{cor}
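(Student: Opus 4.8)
The plan is to obtain Corollary \ref{cor_CK1} as a direct consequence of the linear Christ--Kiselev lemma (Lemma \ref{lem_ck}), by freezing the second entry of the bilinear form. First I would fix $g\in L^2(\R)$ and define, for each pair $(t,s)$, the linear operator $K_g(t,s)\colon L^q(\R)\to L^2(\R)$ by $K_g(t,s)h:=\La_a(K(t,s)h,\,K(t,0)g)$. Since $\La_a$ is bilinear and $K(t,0)g$ does not depend on the integration variable $s$, Fubini's theorem gives, for every $t$ and every interval $J$,
\EQQS{
  \La_a\big(\mathcal{I}_K(f;J)(t),\,K(t,0)g\big)
  &=\int_J\La_a\big(K(t,s)f(s),\,K(t,0)g\big)\,ds\\
  &=\mathcal{I}_{K_g}(f;J)(t).
}
Under the standing hypotheses ($\|a\|_{L^\I}\les 1$, continuity of $t\mapsto K(t,s)$ with values in $B(L^2)$, and the fact that in every situation where the corollary is invoked the arguments of $\La_a$ are localized in frequency, so that $\La_a$ maps $L^q\times L^2$ boundedly into $L^2$), $K_g$ is a continuous $B(L^q,L^2)$-valued kernel on $[t_1,t_2]^2$, with operator norm $\les\|g\|_{L_x^2}$ there.

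With this reformulation, $f\mapsto\mathcal{I}_{K_g}(f;[t_1,t_2])$ is precisely an operator of the kind treated in Lemma \ref{lem_ck}, with $X=L^q(\R)$, $Y=L^2(\R)$, source time exponent $p$ and target time exponent $2$; the gap condition $p<q$ of that lemma is here exactly the assumption $1\le p<2$. Moreover, after the identification above, the hypothesis of Corollary \ref{cor_CK1} reads
\EQQS{
  \big\|\mathcal{I}_{K_g}(f;[t_1,t_2])(t)\big\|_{L^2([t_1,t_2];L^2)}\le C_1\|g\|_{L_x^2}\,\|f\|_{L^p([t_1,t_2];L^q)},
}
i.e. Lemma \ref{lem_ck} applies with its constant taken to be $C_1\|g\|_{L_x^2}$. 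It therefore yields a constant $C_2>0$, depending only on $p$, such that $\|\mathcal{I}_{K_g}(f;[t_0,t])(t)\|_{L^2([t_1,t_2];L^2)}\le C_1C_2\|g\|_{L_x^2}\,\|f\|_{L^p([t_1,t_2];L^q)}$, and rewriting the left-hand side through the first display gives the claimed bound, uniformly in $g\in L^2$.

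There is essentially no obstacle here, and the only points deserving a word are routine. The interchange of $\La_a$ with the $s$-integration in the first display is justified by bilinearity together with the absolute integrability in $s$ (with values in $L^2$) of the integrand, and the measurability and continuity of $K_g$ follow from $\|a\|_{L^\I}\les 1$ and the frequency localization as above. One could equally well bypass Lemma \ref{lem_ck} and rerun the Christ--Kiselev argument verbatim on the $f$-linear expression $f\mapsto\La_a(\mathcal{I}_K(f;[t_0,t])(t),K(t,0)g)$: after the time change making $\|f(\cdot)\|_{L^q}^p\,dt$ a probability measure on $[t_1,t_2]$, one decomposes the triangle $\{(t,s):t_1\le s\le t\le t_2\}$ into the usual Whitney squares $A_{j,k}\times B_{j,k}$ of $\mu$-mass $2^{-j}$ (with $A_{j,k}$ preceding $B_{j,k}$), applies the full-interval hypothesis on each square, and uses that for fixed $j$ the sets $B_{j,k}$ are pairwise disjoint in $t$; summing over scales then produces the series $\sum_{j\ge 1}2^{j(1/2-1/p)}$, which converges precisely because $p<2$ and whose value depends only on $p$. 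In either case the content is simply that Christ--Kiselev may be run on the $f$-linear part of the bilinear quantity, with the factor $\|g\|_{L_x^2}$ carried along passively.
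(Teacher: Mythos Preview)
Your proposal is correct and matches the paper's intended argument: the paper simply states that the corollary ``can be derived by the same argument as used in Lemma \ref{lem_ck}'', i.e.\ by rerunning the Christ--Kiselev decomposition on the $f$-linear map with $g$ held fixed, which is exactly your second description (and the content of your first, once one freezes $g$). Your observation that the clean black-box reduction to Lemma \ref{lem_ck} relies on the pointwise kernel $K_g(t,s)$ being bounded $L^q\to L^2$---something supplied in practice by the frequency localizations $P_{\sim N_1}$, $P_{\sim N_2}$ appearing in every application, but not literally part of the corollary's hypotheses---is accurate, and your fallback of running the Whitney-square argument directly on the assumed full-interval bound sidesteps this entirely, since that argument uses only the integrated estimate and linearity in $f$, never a pointwise kernel bound.
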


\begin{cor}\label{cor_CK2}
  Let $t_0,t_1,t_2,p,q\in\R$, $a\in L^\I(\R^2)$, and $K\in B(L^2)$ satisfy the same assumption as in Corollary \ref{cor_CK1}.
  Assume that there exists $C_1>0$ such that for any $f\in L^{p_1}([t_1,t_2];L^{q_1})$ and $g\in L^{p_2}([t_1,t_2];L^{q_2})$, it holds that
  \EQQS{
    &\big\|\La_a\big(\mathcal{I}_K(f;[t_1,t_2])(t),
     \mathcal{I}_K(g;[t_1,t_2])(t)\big) \big\|_{L^2([t_1,t_2];L^2)}\\
    &\le C_1\|f\|_{L^{p_1}([t_1,t_2];L^{q_1})}
     \|g\|_{L^{p_2}([t_1,t_2];L^{q_2})}.
  }
  Then, there exists $C_2>0$ depending only on $p_1,p_2$ such that for any $f\in L^{p_1}([t_1,t_2];L^{q_1})$ and $g\in L^{p_2}([t_1,t_2];L^{q_2})$, it holds that
  \EQQS{
    &\big\|\La_a\big(\mathcal{I}_K(f;[t_0,t])(t),
     \mathcal{I}_K(g;[t_0,t])(t)\big) \big\|_{L^2([t_1,t_2];L^2)}\\
    &\le C_1C_2
    \|f\|_{L^{p_1}([t_1,t_2];L^{q_1})}
    \|g\|_{L^{p_2}([t_1,t_2];L^{q_2})},
  }
  provided that $1\le p_1,p_2<2$ and $1\le q_1,q_2\le \I$.
\end{cor}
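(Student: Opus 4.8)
The strategy is to mimic the proof of Lemma \ref{lem_ck} (the Christ--Kiselev argument as presented in \cite{Tzv}), but carried out in the bilinear setting, exploiting that the bilinear form $\La_a$ is, like an ordinary product, decomposed additively under a decomposition of each of its arguments. First I would introduce, for $j=1,2$, the increasing functions
\EQQS{
  \lambda_j(t):=\int_{t_1}^{t}\|f_j(s)\|_{L^{q_j}}^{p_j}\,ds,
}
where $f_1=f$, $f_2=g$, $p_1,q_1$ and $p_2,q_2$ are as in the statement, and set $\Lambda_j:=\lambda_j(t_2)$. After a standard limiting argument we may assume $\Lambda_1,\Lambda_2\in(0,\I)$, so that $\lambda_j$ gives a (generalized) change of variables sending $[t_1,t_2]$ onto $[0,\Lambda_j]$. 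The point of the dyadic Christ--Kiselev decomposition is to write the truncated region $\{t_1\le s\le t\}$ as an essentially disjoint union, over $n\ge 0$ and $0\le \ell<2^{n-1}$, of product rectangles $A^n_\ell\times B^n_\ell$ in the $(s_1,s_2)$ variables — one rectangle for each argument of $\La_a$ — where $A^n_\ell$ and $B^n_\ell$ are dyadic subintervals of $[t_1,t_2]$ (read off from the level sets of $\lambda_1$ and $\lambda_2$ respectively) on which $\La_a\big(\mathcal{I}_K(f;A^n_\ell)(t),\mathcal{I}_K(g;B^n_\ell)(t)\big)$ can be estimated by the hypothesis applied on the subinterval, and for which the $A^n_\ell$'s (resp. the $B^n_\ell$'s) have pairwise disjoint interiors for each fixed $n$.

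Concretely, I would expand $\mathcal{I}_K(f;[t_0,t])(t)\,\mathcal{I}_K(g;[t_0,t])(t)$ — more precisely $\La_a$ of these two — by splitting each of the two time integrals into its dyadic pieces along the respective $\lambda_j$-scales, using bilinearity of $\La_a$ to distribute the sum, and then grouping terms so that on each resulting piece one can apply the assumed bound
\EQQS{
  \big\|\La_a\big(\mathcal{I}_K(f;A)(t),\mathcal{I}_K(g;B)(t)\big)\big\|_{L^2([t_1,t_2];L^2)}
  \le C_1\|f\|_{L^{p_1}(A;L^{q_1})}\|g\|_{L^{p_2}(B;L^{q_2})}
}
with $A,B\subset[t_1,t_2]$; note the right-hand side only sees $A$ and $B$, not the truncation $[t_0,t]$, which is exactly why the hypothesis on the full interval suffices once we have localized. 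Summing in $n$ and $\ell$, and using that for each fixed $n$ the intervals $\{A^n_\ell\}_\ell$ are almost disjoint so that $\sum_\ell \|f\|_{L^{p_1}(A^n_\ell;L^{q_1})}^{p_1}\le \|f\|_{L^{p_1}([t_1,t_2];L^{q_1})}^{p_1}$ (and likewise for $g$), together with H\"older's inequality in $\ell$, produces a geometric series in $n$ with ratio $2^{-\delta}$ for some $\delta=\delta(p_1,p_2)>0$ precisely because $p_1,p_2<2$; its sum gives the constant $C_2=C_2(p_1,p_2)$. This is the same mechanism as in Lemma \ref{lem_ck}, where the single condition $p<q$ with $p<\I$ yields summability; in the bilinear case one needs $p_1<2$ and $p_2<2$ so that each of the two exponents beats the exponent $2$ coming from the target space $L^2([t_1,t_2];L^2)$.

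The main obstacle, and the only place where genuine care is needed beyond bookkeeping, is the combinatorial organization of the double dyadic decomposition: one must check that the region $\{(s_1,s_2):t_1\le s_1\le t,\ t_1\le s_2\le t\}$ can indeed be covered (up to a null set, after collapsing the diagonal) by rectangles $A^n_\ell\times B^n_\ell$ such that (i) on each rectangle the $t$-dependence of both truncations has been removed — i.e.\ both $\mathcal{I}_K(f;A^n_\ell)$ and $\mathcal{I}_K(g;B^n_\ell)$ no longer depend on $t$ within the relevant $t$-range — and (ii) for fixed $n$ the first factors (resp.\ second factors) have almost disjoint supports in time. The cleanest way to do this is to perform the Christ--Kiselev dyadic splitting in the $\lambda_1$-variable for the first argument and \emph{independently} in the $\lambda_2$-variable for the second argument, and observe that, since $\La_a$ is bilinear and the kernel $K(t,s)$ is the same in both slots, the resulting off-diagonal rectangles are handled exactly as in the scalar case with $p=\max\{p_1,p_2\}$, while the two arguments' scales are decoupled. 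Once this geometric lemma is in place, the summation is routine and yields the claimed inequality with $C_2$ depending only on $p_1,p_2$.
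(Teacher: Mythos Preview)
Your approach is correct and is precisely what the paper intends: it gives no proof beyond the remark that Corollaries~\ref{cor_CK1} and~\ref{cor_CK2} ``can be derived by the same argument as used in Lemma~\ref{lem_ck}'', i.e., the Christ--Kiselev dyadic decomposition from \cite{Tzv}, carried out independently in each bilinear slot with summability guaranteed by $p_1,p_2<2$. One small expository wrinkle: the region to decompose is not $\{(s_1,s_2):s_1\le t,\ s_2\le t\}$ in the $(s_1,s_2)$-plane, but rather the triangle $\{(s,t):s\le t\}$ twice, once with the $\lambda_1$-Whitney partition for $f$ and once with the $\lambda_2$-partition for $g$; you do say this in your final paragraph, so the argument closes as you describe.
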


The following estimate plays a key role in the proof of Proposition \ref{prop_bistri}.

\begin{prop}\label{prop_bistri3}
  Let $0<T<1$, $\al\in[1,2]$, $\ga=\max\{1-\al/3,2-\al\}$,  $M\ge N\ge 1$, $s>1/2$, and $0<\e\ll 1$ so that $s>1/2+\e$.
  Let $I\subset [0,T]$ be a closed interval of length $|I|\sim M^{-\ga}T$.
  Assume that $u\in L^\I (]0,T[;H^s(\R))$ satisfies \eqref{eq1} on $]0,T[\times \R$ and
  $\|u\|_{L_T^\I H_x^{\frac{1}{2}+\e}}\le K$ for some $K\ge 1$.
  Then, for sufficiently small $T$ satisfying $G(K)T^{\frac{1}{2}}\ll 1$, it holds that
  \EQS{\label{eq_sep1}
    \begin{aligned}
      &\|P_{N}u P_{\le 2^{-5} N}u\|_{L^2(I;L^2)}\\
      &\le G(K) N^{-\frac{\al}{2}}
       \left(T^{-\frac{1}{2}}M^{\frac{\ga}{2}}
       \|P_N u\|_{L^2(I;L^2)}
       +N^{-s-\e}\|u\|_{L_T^\I H_x^s}\right),
    \end{aligned}
  }
 where $G:\R_{\ge 0}\to \R_{\ge 0}$ is a smooth increasing function depending only on $ f$ and on the Fourier symbol $-i p_{\alpha+1} $ of $ L_{\alpha+1}$.
\end{prop}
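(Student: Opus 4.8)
The plan is to estimate $\|P_N u\, P_{\le 2^{-5}N}u\|_{L^2(I;L^2)}$ by writing $u$ on the interval $I$ via the Duhamel formula and splitting the nonlinear contribution according to the frequency-localized decomposition of $\p_x f(u)$ indicated in the Introduction. First I would fix the subinterval $I$ with $|I|\sim M^{-\ga}T$ and write, for $t\in I$ with left endpoint $t_I$,
\EQQS{
  P_N u(t) = U_\al(t-t_I) P_N u(t_I) + \int_{t_I}^t U_\al(t-t') \p_x P_N f(u(t'))\,dt',
}
and similarly for $P_{\le 2^{-5}N}u$, but there the low-frequency Duhamel term costs no derivative and is harmless. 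For the free part $U_\al(t-t_I)P_N u(t_I)\cdot U_\al(t-t_I)P_{\le 2^{-5}N}u(t_I)$ I would apply the free bilinear estimate \eqref{cor2} (in fact its low-frequency variant \eqref{cor2.2}) with $\theta=0$ and $M\ge N$, on the interval $I$ of length $|I|\sim M^{-\ga}T$: this yields a factor $T^0 M^{0}(N)^{-\al/2}$ times $\|P_N u(t_I)\|_{L^2}\|P_{\le 2^{-5}N}u(t_I)\|_{L^2}\lesssim K\,\|P_N u(t_I)\|_{L^2}$. Summing over the $\sim M^\ga T^{-1}$ subintervals and using Cauchy--Schwarz in the interval index converts $\sup_I\|P_N u(t_I)\|_{L^2}$ into $T^{-1/2}M^{\ga/2}\|P_N u\|_{L^2(I;L^2)}$ — this is exactly the first term on the right of \eqref{eq_sep1}.

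For the Duhamel contribution the key is to use the decomposition
\EQQS{
  \p_x P_N(f(u)) = \text{(first term: } \p_x P_{\sim N}(P_N u\,g(u)) \text{)} + \text{(commutator term)} + \text{(two high-frequency terms)},
}
where $g$ collects the low-frequency factors. The commutator term is controlled by Lemma \ref{lem_comm2} together with the Strichartz estimate \eqref{stri_2} and a Christ--Kiselev argument (Corollary \ref{cor_CK1}), gaining $N^{-\ga/\text{something}}$ from the short interval; the two terms carrying at least two factors of frequency $\gtrsim N$ are handled by the $L^4_tL^\infty_x$ Strichartz estimate plus Hölder in time on $I$ (as in \eqref{tata6}), again gaining a power of $|I|$. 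All of these are bounded by $G(K)\,N^{-\al/2}N^{-s-\e}\|u\|_{L^\infty_TH^s_x}$ after checking the exponent arithmetic with $\ga=\max\{2-\al,1-\al/3\}$ — giving the second term on the right of \eqref{eq_sep1}. The remaining first term $\p_x P_{\sim N}(P_N u\,g(u))$ is the one that again produces a quantity of the form $\|P_N u\,P_{\le 2^{-5}N}u\|_{L^2(I;L^2)}$: applying \eqref{cor2.2}/\eqref{eq_bistri2.1}-type estimates to its contribution yields a self-referential bound
\EQQS{
  \|P_N u\,P_{\le 2^{-5}N}u\|_{L^2(I;L^2)} \le G(K)\, N^{1-\frac{\al}{2}-\frac{\ga}{2}}\,\|P_N u\,P_{\le 2^{-5}N}u\|_{L^2(I;L^2)} + (\text{good terms}).
}

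The absorption step is where the constraints on $\ga$ enter: I would choose $\ga$ so that $1-\frac{\al}{2}-\frac{\ga}{2}<0$, i.e.\ $\ga>2-\al$, which together with the requirement $\ga\ge 1-\al/3$ coming from the Strichartz/commutator pieces forces $\ga=\max\{2-\al,1-\al/3\}$; then for $N$ large the self-referential factor is $<1/2$ and can be absorbed into the left-hand side. (For the endpoint $\ga=2-\al$, as noted in Remark \ref{rem_gamma}, one instead keeps $|I|\sim M^{-\ga}T$ with $T$ small so that the prefactor $N^{1-\al/2-\ga/2}T^{1/2}$ or the analogous quantity $N^{\frac{2-\al-\ga}{2}}T$ is small, which is why the hypothesis $G(K)T^{1/2}\ll 1$ is imposed.) The main obstacle I anticipate is precisely the bookkeeping in this recursive estimate: one must make sure that the analytic nonlinearity $f$ — handled by summing over the Taylor coefficients via \eqref{eq2.2ana} and \eqref{leibniz2}, which is what produces the smooth increasing function $G$ — does not destroy the smallness needed for the absorption, and that the two-high-frequency terms, which may contain resonances, genuinely carry two factors with $|\xi|\gtrsim N$ so that \eqref{cor2} applies with a gain. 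The rest is careful but routine tracking of powers of $N$, $M$, and $T$, using $|I|\sim M^{-\ga}T$ and summing over the $\sim M^\ga T^{-1}$ subintervals with Cauchy--Schwarz.
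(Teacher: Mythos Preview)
Your overall strategy matches the paper's, but there is one genuine gap in how you obtain the first term $T^{-1/2}M^{\ga/2}\|P_N u\|_{L^2(I;L^2)}$. The proposition is stated for a \emph{single} interval $I$ of length $|I|\sim M^{-\ga}T$; there are no subintervals to sum over here (that happens later, in the proof of Proposition~\ref{prop_bistri}). Writing Duhamel from the left endpoint $t_I$ only gives you $\|P_N u(t_I)\|_{L^2}$, which you cannot convert into $T^{-1/2}M^{\ga/2}\|P_N u\|_{L^2(I;L^2)}$ by any summation argument. The fix, which the paper uses, is to choose instead $t_0\in I$ to be a point at which $t\mapsto\|P_N u(t)\|_{L^2}$ attains its minimum over $I$ (this exists since $u\in C([0,T];H^{s-})$); then averaging gives
\[
  \|P_N u(t_0)\|_{L^2}\le |I|^{-1}\int_I \|P_N u(t')\|_{L^2}\,dt' \le |I|^{-1/2}\|P_N u\|_{L^2(I;L^2)} \sim T^{-1/2}M^{\ga/2}\|P_N u\|_{L^2(I;L^2)},
\]
which is exactly \eqref{eq3.16}. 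With $t_0$ in place of $t_I$, the free\,$\times$\,free piece and all terms carrying $\|P_N u(t_0)\|_{L^2}$ produce the first summand in \eqref{eq_sep1} directly, on the single interval $I$.

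Two smaller points. First, the low-frequency Duhamel term is not entirely ``harmless'': in the paper's accounting (the term $A_2$) it produces a factor $\langle N_1\rangle^{(3-\al-3\ga)/4}$ that must be summable over $N_1\le N(2)$, and this is precisely one source of the constraint $\ga\ge 1-\al/3$. Second, for $\al\in[1,3/2]$ the value $\ga=2-\al$ is the endpoint, so the self-referential prefactor is $N^{1-\al/2-\ga/2}=N^0$; hence the absorption is driven \emph{only} by the factor $G(K)T^{1/2}\ll 1$ (as you note parenthetically), not by $N$ being large. Apart from the $t_0$ issue, your decomposition into the recursive piece, the commutator term (handled via Lemma~\ref{lem_comm2}), the pieces with at least two high-frequency factors (handled via \eqref{stri_2} with $\theta=1$), and the Taylor tail is the same as the paper's.
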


\begin{rem}\label{rem_sep1}
  Let $\{\om_N^{(\de)}\}$ be an acceptable frequency weight.
  Proceeding exactly as in the proof of \eqref{eq_sep1}, we obtain
  \EQQS{
    &\|P_{N}u P_{\le 2^{-5} N }u\|_{L^2(I;L^2)}\\
    &\le G(K) N^{-\frac{\al}{2}}
     \left(T^{-\frac{1}{2}}M^{\frac{\ga}{2}}\|P_N u\|_{L^2(I;L^2)}
     +\om_N^{-1}N^{-s-\e}\|u\|_{L_T^\I H_\om^s}\right).
  }
  For the same reason, we can also write \eqref{eq_bistri1} with frequency envelopes as
  \EQQS{
    &\| \La_a(P_{N_1} u_1, P_{N_2} u_2)\|_{L_{T,x}^2}\\
    &\le G(K) T^{\frac{1}{4}}
     N_1^{\frac{\ga+1-\al}{4}}N_2^{-s}
     \left(\|P_{N_1}u_1\|_{L_{T,x}^2}
     +\om_{N_1}^{-1}N_1^{-s-\e}\|u_1\|_{L_T^\I H_\om^s}\right)
     \om_{N_2}^{-1}\|u_2\|_{L_T^\I H_\om^s}.
  }
\end{rem}

\begin{proof}[Proof of Proposition \ref{prop_bistri3}]
  It suffices to consider the case $N\ge 2^6$.
  We first notice that, since $u\in L^\infty_T H^{\frac12+}$, we have $ P_N u P_{\ll 2^{-5}N} u \in L^2_T L^2_x $, and thus the quantity on the left-hand side of \eqref{eq_sep1} is well-defined.
  Put $N(k):=N/(16k)$ for $k\in \N$.
  First, we rewrite the equation as follows:
  \EQS{\label{eq2}
    \p_t u+ (L_{\al+1}+f'(0)\p_x)u+\p_x (\tilde{f}(u))=0,
  }
  where $\tilde{f}(x)=f(x)-f(0)-f'(0)x$ for $x\in \R$.
  It is important to observe that the operator $L_{\al+1}+f'(0)\p_x$ still satisfies Hypothesis \ref{hyp1}, so that the Strichartz estimates \eqref{stri_1} and \eqref{stri_2} remain applicable.
  For this reason, we continue to use the notaion $U_\al(t)$ for the linear propagator associated with $L_{\al+1}+f'(0)\p_x$.
  We may set $I=[t_1,t_2]$ for $t_1,t_2\in\R$ with $t_1<t_2$.
  It follows from Remark \ref{rem_conti} that $u\in C([0,T];H^{s-}(\R))$.
  By this continuity, there exists $t_0\in I$ at which $\|P_N u(t)\|_{L_x^2}$ attains its minimum over $I$, so that
  \EQS{\label{eq3.16}
    \|P_N u(t_0)\|_{L_x^2}
    \le \frac{1}{|I|}\int_I\|P_N u(t')\|_{L_x^2}dt'
    \le T^{-\frac{1}{2}}M^{\frac{\ga}{2}}
     \|P_N u\|_{L^2(I;L_x^2)}.
  }
  By the Duhamel principle, we have
  \EQQS{
    P_N u(t)
    = U_\al(t-t_0) P_N u(t_0)
     -\int_{t_0}^t U_\al(t-t')\p_x P_N \tilde{f}(u) dt'
  }
  on $I$.
  The triangle inequality shows that
  \EQQS{
    &\|P_N u P_{\le N(2)} u \|_{L^2(I;L^2)}\\
    &\le \|U_\al(t-t_0) P_N u(t_0) U_\al(t-t_0) P_{\le N(2)} u(t_0) \|_{L^2(I;L^2)}\\
    &\quad+\bigg\| U_\al(t-t_0) P_N u(t_0) \left(\int_{t_0}^t U_\al(t-t')\p_x P_{\le N(2)} \tilde{f}(u) dt'\right)  \bigg\|_{L^2(I;L^2)}\\
    &\quad+\bigg\| \bigg(\int_{t_0}^t U_\al(t-t')\p_x P_N \tilde{f}(u) dt'\bigg)
     U_\al(t-t_0) P_{\le N(2)} u(t_0) \bigg\|_{L^2(I;L^2)}\\
    &\quad+\bigg\| \bigg(\int_{t_0}^t U_\al(t-t')\p_x P_N \tilde{f}(u) dt'\bigg)
     \int_{t_0}^t U_\al(t-t')\p_x P_{\le N(2)} \tilde{f}(u) dt'  \bigg\|_{L^2(I;L^2)}\\
    &=:A_1+A_2+A_3+A_4.
  }
  By \eqref{cor2.2} and \eqref{eq3.16}, we have
  \EQQS{
    A_1
    \les N^{-\frac{\al}{2}}\|P_N u(t_0)\|_{L_x^2}
     \|P_{\le N(2)} u(t_0)\|_{L_x^2}
    \les KT^{-\frac{1}{2}}N^{-\frac{\al}{2}}M^\frac{\ga}{2}
    \|P_N u\|_{L^2(I;L^2)}.
  }
  As for $A_2$, we see from \eqref{cor2} and \eqref{stri_2} with $\te=1$ that
  \EQS{\label{eq3.17XX}
    \begin{aligned}
      &\bigg\| U_\al(t-t_0) P_{N} h_1
       \bigg(\int_{t_0}^{t_2} U_\al(t-t')\p_x P_{N_1} h_2 dt'\bigg)  \bigg\|_{L^2(I;L^2)}\\
      &\lesssim N^{-\frac{\al}{2}} \|P_{N} h_1\|_{L_x^2}
       \bigg\| \int_{t_0}^{t_2} U_\al(-t')\p_x P_{N_1} h_2 dt'  \bigg\|_{L_x^2}\\
      &\lesssim N^{-\frac{\al}{2}} \LR{N_1}^{\frac{5-\al}{4}} \|h_1\|_{L_x^2}
       \|h_2\|_{L^{\frac{4}{3}}(I;L^1)}
    \end{aligned}
  }
  for $N_1\ll N$.
  Moreover, we see from the Leibniz rule \eqref{leibniz2} that
  \EQS{\label{eq3.17}
    \begin{aligned}
      \LR{N_1}^{\frac{5-\al}{4}}
       \|P_{N_1} \tilde{f}(u)\|_{L^{\frac{4}{3}}(I;L^1)}
      &\les T^\frac{3}{4} M^{-\frac{3}{4}\ga}
        \LR{N_1}^{\frac{5-\al}{4}} \LR{N_1}^{-\frac12-\e}
       \|J_x^{\frac{1}{2}+\e}\tilde{f}(u)\|_{L_T^\I L_x^1} \\
      &\les T^{\frac{3}{4}}
       \LR{N_1}^\frac{3-\alpha-3\gamma}{4}
       N^{-\e} G(K),
    \end{aligned}
  }
  since  $|I|\sim M^{-\ga}T$, $M\ge N\ge 1$, and $N\gg N_1$.
  Then, \eqref{eq3.16}, \eqref{eq3.17}, Corollary \ref{cor_CK1} and \eqref{eq3.17XX} with $h_1= u(t_0)$, $h_2=\tilde{f}(u)$ and $ N_1=N(2)$  imply that
  \EQQS{
    A_2
    &\le  \bigg\| U_\al(t-t_0) P_N u(t_0)
     \bigg(\int_{t_0}^{t} U_\al(t-t')
      \p_x P_{\le N(2)} \tilde{f}(u) dt'\bigg)  \bigg\|_{L^2(I;L^2)}\\
    &\lesssim N^{-\frac{\al}{2}} \|P_N u(t_0)\|_{L_x^2}
      \sum_{N_1\le N(2)}
      \LR{N_1}^{\frac{5-\alpha}{4}}
     \|P_{N_1}\tilde{f}(u)\|_{L^{\frac{4}{3}}(I;L^1)}\\
    &\lesssim G(K) T^{\frac{1}{4}}N^{-\frac{\al}{2}-\e}
     M^\frac{\ga}{2}
     \|P_N u\|_{L^2(I;L^2)},
  }
  assuming that $3-\alpha-3\gamma \le 0 $.
  Let us now tackle the estimate of   $A_3$.
  We first notice that, in the same way as for \eqref{eq3.17}, applying \eqref{cor2.2} and \eqref{stri_2} with $\te\in \{0,1\} $ it holds  for $ N\gg N_1\ge 1$,
   \EQS{\label{toto}
    \begin{aligned}
      &\bigg\|
       \bigg(\int_{t_0}^{t_2}  U_\al(t-t')\p_x P_{N} h_1 dt'\bigg)
       U_\al(t-t_0) P_{\le N_1} h_2\bigg\|_{L^2(I;L^2)}\\
      &\lesssim  N^{-\frac{\al}{2}} N
       N^{\frac{1-\alpha}{4} \theta}
       \|h_1\|_{L^{\frac{4}{4-\theta}}(I;L^{\frac{2}{\theta+1}})}\|h_2\|_{L_x^2}.
    \end{aligned}
  }

  Now we decompose $P_N\tilde{f}(u)$.
  By Hypothesis \ref{hyp2}  we have
  \EQS{\label{decom1}
    \begin{aligned}
      P_N\tilde{f}(u)
      &=\sum_{k=2}^{N(2)} \frac{f^{(k)}(0)}{k!}P_N(u^k)
       +P_N f_{>N(2)}(u),
    \end{aligned}
  }
  where $f_{>N(2)}(x) = \tilde{f}(x)-\sum_{k=2}^{N(2)}\frac{f^{(k)}(0)}{k!}x^k$ and by the  Littlewood-Paley decomposition, we can rewrite, for $ k\in \{2,\dots,N(2)\}$,
  \begin{equation}\label{trz}
     P_N(u^k)
     =\sum_{j=1}^k\binom{k}{j}
      P_N\left((P_{> N(k)}u)^j (P_{\le N(k)}u)^{k-j}\right) \, .
  \end{equation}
  Recall that $N(k)=N/(16k)$.
  Here, note that the term corresponding to $j=0$ in \eqref{trz} vanishes automatically due to an impossible interaction.
  We aim to apply \eqref{toto} with either $\theta=0 $ or $\theta=1 $ to estimate the contribution of the different  terms in \eqref{trz}.
  A key point in our argument is that, on the one hand, for $ j\ge 2$, we can place half a derivative on one of the $u$'s, since at least two functions have frequencies of modulus $\gtrsim N(k)$.
  On the other hand, for $j=1$, we can extract the term $\|P_N u P_{\le N(2)} u\|_{L^2_I L^2_x} $ on the right hand side of \eqref{eq_sep1}, along with some commutator terms for which we can also recover at least half a derivative.
  This will lead to a suitable estimate in which we succeed in obtaining a small factor in front of $\|P_N u P_{\le N(2)} u\|_{L^2_I L^2_x} $.
  For that purpose, we observe the following: first notice that $N(k)\ge 2$ when $k=2,\dots, N(2)$.
  For the term corresponding to $j=1$ of the first term in \eqref{decom1}, we have
  \EQQS{
    |\xi_1|\ge |\xi|-|\xi_2|-\cdots-|\xi_k|
    \ge \frac{N}{2}-\frac{(k-1)N}{8k}\ge \frac{N}{4},
  }
  where $\xi=\sum_{j=1}^k \xi_k$, $N/2\le |\xi|\le 2N$ and $|\xi_j|\le N/8k$ for $j=2,\dots,k$.
  Similarly, it is easy to see that $|\xi_1|\le 4N$.
  This implies that
  \EQS{\label{decom2}
    \begin{aligned}
      &P_N(u^k)\\
      &=kP_{\sim N}(P_{N}u(P_{\le N(k)}u)^{k-1})+kP_{\sim N}\left( [P_N,(P_{\le N(k)}u)^{k-1}]P_{\sim N} u\right)\\
      &\quad+ \sum_{j=2}^k\binom{k}{j}
       P_N((P_{> N(k)}u)^j (P_{\le N(k)}u)^{k-j})\\
      &=kP_{\sim N}(P_{N}uP_{\le N(2)}u(P_{\le N(k)}u)^{k-2})
       -kP_{\sim N}(P_{N}uP_{[N(2),N(k)]}u(P_{\le N(k)}u)^{k-2})\\
      &\quad+k P_{\sim N} \left( [P_N,(P_{\le N(k)}u)^{k-1}]P_{\sim N}u\right)
       +\sum_{j=2}^k\binom{k}{j}
       P_N((P_{> N(k)}u)^j (P_{\le N(k)}u)^{k-j})\\
      &=:P_{\sim N}V_1+P_{\sim N}V_2+P_{\sim N}V_3
       +\sum_{j=2}^k P_{ N} V_{j+2},
    \end{aligned}
  }
  where we set $P_{[N(2),N(k)]}u=(P_{\le N(2)}-P_{\le N(k)})u$ and  we used $P_{\sim N}P_N=P_N$.
  Coming back to $A_3$, we further decompose $A_3$ as
  \EQQS{
    A_3
    &\le \sum_{k=2}^{N(2)}\sum_{j=1}^{k+2} \frac{|f^{(k)}(0)|}{k!}
     \bigg\| \bigg(\int_{t_0}^t U_\al(t-t')\p_xP_{\sim N}
     V_j dt'\bigg)
     U_\al(t-t_0) P_{\le N(2)} u(t_0) \bigg\|_{L^2(I;L^2)}\\
    &\quad+
     \bigg\| \bigg(\int_{t_0}^t U_\al(t-t')\p_x
      P_N f_{>N(2)}(u) dt'\bigg)
      U_\al(t-t_0) P_{\le N(2)} u(t_0) \bigg\|_{L^2(I;L^2)}\\
   &=:\sum_{k=2}^{N(2)}\sum_{j=1}^{k+2} \frac{|f^{(k)}(0)|}{k!}A_{3,1,j}+A_{3,2}.
  }
    Applying \eqref{toto} with $ \theta=0 $ together with  the Christ-Kiselev lemma (Corollary \ref{cor_CK1}), we eventually get
  \EQQS{
    &\sum_{k=2}^{N(2)}\frac{|f^{(k)}(0)|}{k!}A_{3,1,1}\\
    &\les \sum_{k=2}^{N(2)}\frac{|f^{(k)}(0)|}{(k-1)!}
     N^{1-\frac{\al}{2}}
     \|P_N u P_{\le N(2)} u (P_{\le N(k)}u)^{k-2}\|_{L^1(I;L^2)}
     \|P_{\le N(2)}u(t_0)\|_{L_x^2}\\
    &\le N^{1-\frac{\al}{2}}M^{-\frac{\ga}{2}}
     G(K)T^{\frac{1}{2}}
     \|P_N u P_{\le N(2)}u\|_{L^2(I;L^2)},
  }
  where $G:\R_{\ge 0}\to\R_{\ge 0}$ is a smooth increasing function.
  Here, it is important to observe that $N^{1-\frac{\al}{2}}M^{-\frac{\ga}{2}}\le N^{1-\frac{\al}{2}-\frac{\ga}{2}}\le1$, since $M\ge N$ and $\ga\ge 2-\al$.
  Next, we consider $A_{3,1,2}$, where we can place the derivative on $P_{[N(2),N(k)]}u=(P_{N(2)}-P_{N(k)})u$.
  Indeed, it is easy to see that
  \EQS{\label{eq3.27}
    \begin{aligned}
      \|V_2\|_{L^{\frac{4}{3}}(I ;L^1)}
      &\les k K^{k-2} |I|^{\frac{3}{4}}
       \|P_N u\|_{L_T^\I L_x^2}
       \|P_{[N(2),N(k)]}u\|_{L_T^\I L_x^2}\\
      &\les k(k-1) K^{k-1} T^{\frac{3}{4}}M^{-\frac{3}{4}\ga}
       N^{-\frac{1}{2}-s-\e}
       \|u\|_{L_T^\I H_x^s},
    \end{aligned}
  }
  where we used $N(k)^{-\frac{1}{2}-\e}\le 2^5(k-1)N^{-\frac{1}{2}-\e}$.
  Therefore, we see from \eqref{eq3.27} and  \eqref{toto} with $\te=1$ and the Christ-Kiselev lemma (Corollary \ref{cor_CK1}) that
  \EQQS{
    \sum_{k=2}^{N(2)}\frac{|f^{(k)}(0)|}{k!}A_{3,1,2}
    &\les \sum_{k=2}^{N(2)}\frac{|f^{(k)}(0)|}{k!}
     N^{-\frac{\al}{2}}
     N^{1-\frac{\al-1}{4}}
     \|V_2\|_{L^{\frac{4}{3}}(I ;L^1)}
     \|P_{\le N(2)}u(t_0)\|_{L_x^2}\\
    &\le N^{-\frac{\al}{2}}
     N^{-s-\e} N^\frac{3-\alpha-3\gamma}{4} G(K) T^{\frac{3}{4}}\|u\|_{L_T^\I H_x^s} \\
       &\lesssim G(K) T^{\frac{3}{4}}  N^{-\frac{\al}{2}-s-\e} \|u\|_{L_T^\I H_x^s},
  }
  since $M\ge N$ and $3-\al-3\ga\le 0$.
  As for $A_{3,1,3}$, we use \eqref{eq_comm2} to handle the commutator, so that
  \EQS{\label{eq3.21}
    \begin{aligned}
      \|V_3\|_{L^{\frac{4}{3}}(I;L^1)}
      &\lesssim k |I|^{\frac{3}{4}}
       \|[P_{N},(P_{\le N(k)}u)^{k-1}]P_{\sim N}u\|_{L_T^\I L_x^1}\\
      &\lesssim k T^{\frac{3}{4}}M^{-\frac{3}{4}\ga}N^{-1+\frac{1}{2}-\e}
       \|P_{\sim N}u\|_{L_T^\I L_x^2}
       \|J_x^{\frac{1}{2}+\e} (P_{\le N(k)}u)^{k-1} \|_{L_T^\I L_x^2}\\
      &\lesssim k K^{k-1} T^{\frac{3}{4}}
       N^{-\frac{1}{2}-\frac{3}{4}\ga-s-\e}
       \|u\|_{L_T^\I H_x^s}\, .
    \end{aligned}
  }
Again   \eqref{toto} with $\te=1$ and Christ-Kiselev lemma (Corollary \ref{cor_CK1}) then lead to
  \EQQS{
    \sum_{k=2}^{N(2)}\frac{|f^{(k)}(0)|}{k!}A_{3,1,3}
    &\lesssim \sum_{k=2}^{N(2)}\frac{|f^{(k)}(0)|}{k!}
     N^{-\frac{\al}{2}} N^{\frac{5-\al}{4}}
     \|V_3\|_{L^{\frac{4}{3}}(I;L^1)}
     \|P_{\le N(2)}u(t_0)\|_{L_x^2}\\
    &\le N^{-\frac{\al}{2}} N^{-s-\e} G(K)T^{\frac{3}{4}}
     \|u\|_{L_T^\I H_x^s},
  }
  since $3-\al-3\ga\le 0$.
  As for $A_{3,1,j+2}$ with $j=2,\dots,k$, we can apply the argument in $A_{3,1,2}$, i.e., allowing us to distribute half a derivative to one of them.
  That is, it holds that
  \EQS{\label{eq3.22}
    \begin{aligned}
      &\|P_{N}((P_{> N(k)}u)^{j}
       (P_{\le N(k)}u)^{k-j})\|_{L^\frac{4}{3}(I;L^1)}\\
      &\les T^{\frac{3}{4}}M^{-\frac{3}{4}\ga}K^{k-j}
       \|(P_{>N(k)}u)^{j-1}\|_{L_T^\I L_x^2}
       \sum_{M_1\ge N(k)}
       \|P_{M_1}u\|_{L_T^\I L_x^2}\\
      &\les k^{s+1} T^{\frac{3}{4}}
       N^{-\frac{1}{2}-\frac{3}{4}\ga-s-\e}
       K^{k-1}\|u\|_{L_T^\I H_x^s},
    \end{aligned}
  }
  since $N(k)^{-\frac{1}{2}-s-\e}\les k^{s+1} N^{-\frac{1}{2}-s-\e}$.
  Again, \eqref{toto} with $\te=1$ and Corollary \ref{cor_CK1} together with \eqref{eq3.22} lead to
  \EQQS{
    \sum_{k=2}^{N(2)}\sum_{j=2}^{k}\frac{|f^{(k)}(0)|}{k!}A_{3,1,j+2}
    &\lesssim \sum_{k=2}^{N(2)}\sum_{j=2}^{k}\frac{|f^{(k)}(0)|}{k!}
      N^{-\frac{\al}{2}}
      N^{\frac{5-\al}{4}}
     \|V_{j+2}\|_{L^{\frac{4}{3}}(I;L^1)}
     \|P_{\le N(2)}u(t_0)\|_{L_x^2}\\
    &\lesssim \sum_{k=2}^{N(2)}\frac{|f^{(k)}(0)|}{(k-1)!}
     N^{-\frac{\al}{2}} N^{-s-\e} N^\frac{3-\alpha-3\gamma}{4}
     2^k K^k T^{\frac{3}{4}}\|u\|_{L_T^\I H_x^s}\\
    &\les N^{-\frac{\al}{2}} N^{-s-\e}
     G(2K)T^{\frac{3}{4}}\|u\|_{L_T^\I H_x^s}.
  }
  Finally, we estimate $A_{3,2}$.
  By the product estimate, it is easy to see that
  \EQS{\label{eq3.26}
    \|f_{>N(2)}(u)\|_{H_x^s}
    \les \sum_{k=N(2)+1}^\I \frac{|f^{(k)}(0)|}{k!}
     \|u\|_{L_x^\I}^{k-1}\|u\|_{H_x^s}
    \les N^{-1}G(K)\|u\|_{H_x^s},
  }
  since $N(2)=2^{-5}N$ and $s>1/2$.
  This together with \eqref{toto} with $\te=0$ implies that
  \EQQS{
    A_{3,2}
    &\les N^{1-\frac{\al}{2}}\|P_N f_{>N(2)}(u)\|_{L^1(I;L^2)}
     \|P_{\le N(2)}u(t_0)\|_{L_x^2}\\
    &\les K TN^{1-\frac{\al}{2}} N^{-s} M^{-\ga}
     \|f_{>N(2)}(u)\|_{L_T^\I H_x^s}
    \les TN^{-\frac{\al}{2}} N^{-s-\ga}G(K)\|u\|_{L_T^\I H_x^s}
  }
  In the same way, the estimates for $A_4$ follows from those for $A_3$, using \eqref{eq3.17} and Corollary \ref{cor_CK2} instead of Corollary \ref{cor_CK1}.
  Therefore, collecting the estimates above, we get
  \EQQS{
    &\|P_N u P_{\le N(2)}u\|_{L^2(I;L^2)}\\
    &\le CN^{-\frac{\al}{2}} (K+T^{\frac{3}{4}}G(K))
    \left(M^{\frac{\ga}{2}} T^{-\frac{1}{2}}\|P_N u\|_{L^2(I;L^2)}
     +N^{-s-\e}G(K)T^{\frac{3}{4}}\|u\|_{L_T^\I H_x^s}\right)\\
    &\quad+CN^{1-\frac{\al}{2}-\frac{\ga}{2}}G(K)T^{\frac{1}{2}}
     \|P_N u P_{\le N(2)}u\|_{L^2(I;L^2)}.
  }
  Since $2-\al-\ga\le 0$, by choosing $T$ such that $CG(K)T^{\frac{1}{2}}\ll 1$, we obtain \eqref{eq_sep1}.
  This concludes the proof.
\end{proof}

\begin{proof}[Proof of Proposition \ref{prop_bistri}]
  When $N_1\lesssim 2^6$, the Young inequality with $\|a\|_{L^\I}\les 1$ and the Bernstein inequality show that
  \EQS{\label{eq3.28}
    \begin{aligned}
        \|\La_a(P_{N_1}u_1,P_{N_2}u_2)\|_{L_{T,x}^2}
      &\les N_2^{\frac{1}{2}} \|P_{N_1}u_1\|_{L_{T,x}^2}
        \|P_{N_2}u_2\|_{L_T^\I L_x^2}\\
      &\les N_1^{\frac{\ga+1-\al}{4}} \|P_{N_1}u_1\|_{L_{T,x}^2}
        \|P_{N_2}u_2\|_{L_T^\I L_x^2}.
    \end{aligned}
  }
  Therefore, we only consider the case $N_1\gg 2^6$.
  We divide the time interval $[0,T]$ into closed subintervals of length $\sim N_1^{-\ga} T $;
  that is, we define $\{I_{j,N_1}\}_{j\in J_{N_1}}$ with $  \# J_{N_1}\sim N_1^\ga $ such that $\bigcup_{j\in J_{N_1}}I_{j,N_1}=[0,T]$ and $|I_{j,N_1}|\sim N_1^{-\ga} T$.
  Without loss of generality, we may set $I_{j,N_1}=[t_j,t_{j+1}]$ for $j\in J_{N_1}$.
  Remark \ref{rem_conti} implies that $u\in C([0,T];H^{s-}(\R))$.
  By this continuity, there exists $\tilde{t}_{j,N_1}\in I_{j,N_1}$ at which $\|P_{N_1}u_1(t)\|_{L_x^2}$ attains its minimum on $I_{j,N_1}$.
  For simplicity, we write $t_j=t_{j,N_1}, \tilde{t}_j=\tilde{t}_{j,N_1}$, and $N_m(k)=N_m/(16k)$ for $m=1,2$ and $k\in\N$.
  Since $u_1$ and $u_2$ satisfy \eqref{eq2} on $ I_{j,N_1} $, it holds for $m=1,2$
  \EQQS{
    P_{N_m}u_m(t)
    &=U_\al(t-\tilde{t}_j)P_{N_m} u_m(\tilde{t}_j)+\int_{\tilde{t}_j}^t U_\al(t-\ta) \p_x P_{N_m} \tilde{f}(u_m) d\ta\\
    &=:U_\al(t-\tilde{t}_j)P_{N_m} u_m(\tilde{t}_j) +F_{m,j}.
  }
  Therefore
  \EQQS{
    &\|\La_a (P_{N_1} u_1, P_{N_2} u_2)\|_{L_{T,x}^2}^2\\
    &=\sum_{j\in J_{N_1}}
     \|\La_a (P_{N_1} u_1, P_{N_2} u_2)\|_{L^2(I_{j,N_1};L^2)}^2\\
    &\le \sum_{j\in J_{N_1}}
     \|\La_a(P_{N_1}U_\al(t-\tilde{t}_j)u_1(\tilde{t}_j),
     P_{N_2}U_\al(t-\tilde{t}_j)u_2(\tilde{t}_j))\|_{L^2(I_{j,N_1};L^2)}^2\\
    &\quad+\sum_{j\in J_{N_1}}A_{j}^2
     +\sum_{j\in J_{N_1}}B_{j}^2
     +\sum_{j\in J_{N_1}} D_{j}^2,
  }
  where
  \EQQS{
    A_{j}&=\|\La_a(F_{1,j},
     P_{N_2}U_\al(t-\tilde{t}_j)u_2(\tilde{t}_j))\|_{L^2(I_{j,N_1};L^2)},\\
    B_{j}&=\|\La_a(P_{N_1}U_\al(t-\tilde{t}_j)u_1(\tilde{t}_j),
     F_{2,j})\|_{L^2(I_{j,N_1};L^2)},\\
    D_{j}&=\|\La_a(F_{1,j},F_{2,j})\|_{L^2(I_{j,N_1};L^2)}.
  }
  We see from \eqref{cor1} and \eqref{eq3.16} that
  \EQS{\label{eq3.23}
    \begin{aligned}
      &\sum_{j\in J_{N_1}} \|\La_a(P_{N_1}U_\al(t-\tilde{t}_j)u_1(\tilde{t}_j),
       P_{N_2}U_\al(t-\tilde{t}_j)u_2(\tilde{t}_j))\|_{L^2(I_{j,N_1};L^2)}^2\\
      &\lesssim \sum_{j\in J_{N_1}}  T^{\frac 12} N_1^{-\frac{\al+\ga-1}{2}}
        \|P_{N_1} u_1(\tilde{t}_j)\|_{L_x^2}^2
        \|P_{N_2} u_2(\tilde{t}_j)\|_{L_x^2}^2\\
      &\lesssim T^{-\frac 12}N_1^{\frac{\ga+1-\al}{2}}
      \|P_{N_1} u_1\|_{L^2_{T,x}}^2\|P_{N_2} u_2\|_{L_T^\I L^2_x}^2.
    \end{aligned}
  }
  For the contribution of $A_{j}$ for $j\in J_{N_1}$, first we notice that \eqref{cor1} implies that for $\theta\in \{0,1\} $, $h_1\in L^{\frac{4}{4-\theta}}(I_{j,N_1};L^{\frac{2}{\theta+1}})$ and $h_2\in L_x^2$,
  \EQS{\label{eq3.24}
    \begin{aligned}
      &\bigg\|\La_a\bigg(\int_{a_{j}}^{b_{j}}
       U_\al(t-\ta) \p_x P_{\sim N_1} h_1 d\ta,
       U_\al(t)P_{\sim N_2} h_2\bigg)\bigg\|_{L^2(I_{j,N_1};L^2)}\\
      &\lesssim T^{\frac{1}{4}}N_1^{-\frac{\al+\ga-1}{4}}
       \bigg\|\int_{a_{j}}^{b_{j}}
        U_\al(-\ta)\p_x P_{\sim N_1}h_1d\ta\bigg\|_{L_x^2}
       \|h_2\|_{L_x^2}\\
      &\lesssim T^{\frac{1}{4}}N_1^{-\frac{\al+\ga-1}{4}}
       N_1^{1-\frac{\al-1}{4}\te}
       \|h_1\|_{L^{\frac{4}{4-\theta}}(I_{j,N_1};L^{\frac{2}{\theta+1}})}
       \|h_2\|_{L_x^2},
    \end{aligned}
  }
  where   $I_{j,N_1}=[t_j,t_{j+1}]$.
  Then, in light of the decompositions \eqref{decom1} and \eqref{decom2}, we see from \eqref{eq3.24} with $h_1=V_m$ or $P_{N_1}f_{>N_1(2)}(u_1)$ and $h_2=U_\al(-\tilde{t}_j)P_{N_2}u_2(\tilde{t}_j)$ that
  \EQS{\label{eq3.25}
    \begin{aligned}
    A_j
    &\les \sum_{k=2}^{N_1(2)}
     \frac{|f^{(k)}(0)|}{k!} \sum_{m=1}^{k+2} T^{\frac{1}{4}}
     N_1^{-\frac{\al+\ga-1}{4}} N_1^{1-\frac{\al-1}{4}\te_m}
     \|V_m\|_{L^{\frac{4}{4-\theta_m}}(I_{j,N_1};L^{\frac{2}{\theta_m+1}})}\|P_{N_2}u_2(\tilde{t}_j)\|_{L_x^2}\\
    &\quad+T^{\frac{1}{4}}
    N_1^{-\frac{\al+\ga-1}{4}+1}
    \|P_{N_1}f_{>N_1(2)}(u_1)\|_{L^1(I_{j,N_1};L^2)}
    \|P_{N_2}u_2(\tilde{t}_j)\|_{L_x^2},
    \end{aligned}
  }
  where $V_m$ for $m=1,\dots,k+2$ is defined in \eqref{decom2} and we take $\te_1=0$ and $\te_m=1$ for $m\ge 2$.
  By \eqref{eq3.26}, it is easy to see that
  \EQQS{
    &T^{\frac{1}{4}}
     N_1^{-\frac{\al+\ga-1}{4}+1}
     \|P_{N_1}f_{>N_1(2)}(u_1)\|_{L^1(I_{j,N_1};L^2)}
     \|P_{N_2}u_2(\tilde{t}_j)\|_{L_x^2}\\
    &\le T^{\frac{1}{4}}
     N_1^{-\frac{\al+\ga-1}{4}+1}
     T N_1^{-s-\ga-1}
     \|f_{>N_1(2)}(u_1)\|_{L_T^\I H_x^s}
     \|P_{N_2}u_2\|_{L_T^\I L_x^2}\\
    &\le G(K) T^{\frac{5}{4}}
     N_1^{-\frac{\al+\ga-1}{4}-s-\ga}
     \|u_1\|_{L_T^\I H_x^s}
     \|P_{N_2}u_2\|_{L_T^\I L_x^2}.
  }
  Next, for $V_1$, by \eqref{eq_sep1}, it holds that for sufficiently small $T>0$,
  \EQQS{
    &\sum_{k=2}^{N_1(2)} \frac{|f^{(k)}(0)|}{k!}
     N_1 \|V_1\|_{L^1(I_{j,N_1};L^2)}\\
    &\les \sum_{k=2}^{N_1(2)} \frac{|f^{(k)}(0)|}{(k-1)!}
     N_1^{1-\frac{\ga}{2}}T^{\frac{1}{2}} K^{k-2}
     \|P_{N_1}u_1 P_{\le N_1(2)}u_1\|_{L^2(I_{j,N_1};L^2)}\\
    &\le G(K)(N_1^{\frac{\ga}{2}}
     \|P_{N_1}u_1\|_{L^2(I_{j,N_1};L^2)}
     +T^{\frac{1}{2}} N_1^{-s-\e}
     \|u_1\|_{L_T^\I H_x^s}).
  }
  On the other hand, for $V_m$ with $m\ge 2$, we see from \eqref{eq3.27}, \eqref{eq3.21}, and \eqref{eq3.22} that
  \EQQS{
    &\sum_{k=2}^{N_1(2)}
     \frac{|f^{(k)}(0)|}{k!} \sum_{m=2}^{k+2} T^{\frac{1}{4}}
     N_1^{-\frac{\al+\ga-1}{4}} N_1^{1-\frac{\al-1}{4}}
     \|V_m\|_{L^{\frac{4}{3}}(I_{j,N_1};L^1)}
     \|P_{N_2}u_2(\tilde{t}_j)\|_{L_x^2}\\
    &\les \sum_{k=2}^{N_1(2)}
     \frac{|f^{(k)}(0)|}{(k-2)!}K^{k-1} T N_1^{-\frac{\al+\ga-1}{4}}
     N_1^{-s-\e} \|u_1\|_{L_T^\I H_x^s}
     \|P_{N_2}u_2\|_{L_T^\I L_x^2}
     \sum_{m=2}^{k+2}\binom{k}{m-2}\\
    &\le G(2K) T N_1^{-\frac{\al+\ga-1}{4}-s-\e}
     \|u_1\|_{L_T^\I H_x^s}
     \|P_{N_2}u_2\|_{L_T^\I L_x^2}.
  }
  Collecting the above estimates, we obtain
  \EQQS{
    \sum_{j\in J_{N_1}}A_j^2
    &\les G(K)T^{\frac{1}{2}} N_1^{-\frac{\al+\ga-1}{2}}
     \|P_{N_2}u_2\|_{L_T^\I L_x^2}^2\\
    &\quad\quad\times\sum_{j\in J_{N_1}}
     (N_1^{\frac{\ga}{2}}\|P_{N_1}u_1\|_{L^2(I_{j,N_1};L^2)}
     +T^{\frac{1}{2}}N_1^{-s-\e}\|u_1\|_{L_T^\I H_x^s})^2\\
    &\le G(K)T^{\frac{1}{2}} N_1^{\frac{\ga+1-\al}{2}}
     (\|P_{N_1}u_1\|_{L_{T,x}^2}
     +N_1^{-s-\e}\|u_1\|_{L_T^\I H_x^s})^2
     \|P_{N_2}u_2\|_{L_T^\I L_x^2}^2,
  }
  which completes the estimates of $A_j$.
  Next, we consider the contribution of $B_j$.
  The contribution of $B_j$ can be estimated in a similar (and even simpler) manner, since $N_1\ge N_2$.
  When $N_2\lesssim 2^6$, as a direct consequence of \eqref{cor1}, \eqref{eq3.16} and \eqref{eq3.24} (with $h_1$ and $h_2$ switched), we obtain
  \EQS{\label{eq3.29}
    \begin{aligned}
    B_j^2
    &\les T^{\frac{1}{2}} N_1^{-\frac{\al+\ga-1}{2}}N_2^2
      \|P_{N_1}u_1(\tilde{t}_j)\|_{L_x^2}^2
      \|P_{N_2}\tilde{f}(u_2)\|_{L^1(I_{j,N_1};L_x^2)}^2\\
    &\les G(K)T^{\frac{3}{2}} N_1^{\frac{\ga+1-\al}{2}-2\gamma}N_2^{-2s}
      \|P_{N_1}u_1\|_{L^2(I_{j,N_1};L_x^2)}^2
      \|u_2\|_{L_T^\I H_x^s}^2,
    \end{aligned}
  }
  which is safely summable over small time intervals $j\in J_{N_1}$.
  On the other hand, when $N_2\gg 2^6$, we apply the decompositions \eqref{decom1} and \eqref{decom2} to $P_{N_2}\tilde{f}(u_2)$.
  Similarly to \eqref{eq3.25}, we have
  \EQQS{
    B_j
    &\les \sum_{k=2}^{N_2(2)}
     \frac{|f^{(k)}(0)|}{k!} \sum_{m=1}^{k+2} T^{\frac{1}{4}}
     N_1^{-\frac{\al+\ga-1}{4}} N_2^{1-\frac{\al-1}{4}\te_m}
     \|P_{N_1}u_1(\tilde{t}_j)\|_{L_x^2}
     \|\ti{V}_m\|_{L^{\frac{4}{4-\theta_m}}(I_{j,N_1};L^{\frac{2}{\theta_m+1}})}\\
    &\quad+T^{\frac{1}{4}}
    N_1^{-\frac{\al+\ga-1}{4}}N_2
    \|P_{N_1}u_1(\tilde{t}_j)\|_{L_x^2}
    \|P_{N_2}f_{>N_2(2)}(u_2)\|_{L^1(I_{j,N_1};L^2)},
  }
  where $\ti{V}_m$ denotes the $m$-th term in the decomposition of $P_{N_2}(u_2^k)$ given by \eqref{decom2}.
  By \eqref{eq_sep1}, \eqref{eq3.27}, \eqref{eq3.21}, \eqref{eq3.22}, and \eqref{eq3.26} with $u=u_2$, $M=N_1$, and $N=N_2$, we arrive at
  \EQQS{
    B_j
    &\les T^{\frac{1}{4}} G(2K) \bigg(\frac{N_2}{N_1}\bigg)^{\frac{\ga}{2}}
      N_1^{\frac{\ga+1-\al}{4}}N_2^{-s}
      \|P_{N_1}u_1\|_{L^2(I_{j,N_1};L^2)}
      \|u_2\|_{L_T^\infty H_x^s}\\
    &\quad+T^{\frac{1}{4}} G(K)N_1^{\frac{\ga+1-\al}{4}-\ga}N_2^{-s}
      \|P_{N_1}u_1\|_{L^2(I_{j,N_1};L^2)}
      \|u_2\|_{L_T^\infty H_x^s},
  }
  where we also used \eqref{eq3.16}. Note that here we applied Proposition \ref{prop_bistri3}  with $  M=N_1\ge N=N_2$.
  These bounds are summable over $j\in J_{N_1}$ as well.
  Finally, we consider the contribution of $D_j$.
  We see from \eqref{cor1} and Corollary \ref{cor_CK2} that
  \EQQS{
    &\bigg\|\La_a\bigg(\int_{a_{j}}^{b_{j}}
     U_\al(t-\ta) \p_x P_{\sim N_1} h_1 d\ta,
     \int_{a_{j}}^{b_{j}}
      U_\al(t-\ta) \p_x P_{\sim N_2} h_2 d\ta\bigg)\bigg\|_{L^2(I_{j,N_1};L^2)}\\
    &\lesssim T^{\frac{1}{4}}N_1^{-\frac{\al+\ga-1}{4}}
     \bigg\|\int_{a_{j}}^{b_{j}}
      U_\al(-\ta)\p_x P_{\sim N_1}h_1d\ta\bigg\|_{L_x^2}
      \bigg\|\int_{a_{j}}^{b_{j}}
       U_\al(-\ta)\p_x P_{\sim N_2}h_2d\ta\bigg\|_{L_x^2}\\
    &\lesssim T^{\frac{1}{4}}N_1^{-\frac{\al+\ga-1}{4}}
     N_1^{1-\frac{\al-1}{4}\te_1}N_2^{1-\frac{\al-1}{4}\te_2}
     \|h_1\|_{L^{\frac{4}{4-\theta_1}}(I_{j,N_1};L^{\frac{2}{\theta_1+1}})}
     \|h_2\|_{L^{\frac{4}{4-\theta_2}}(I_{j,N_1};L^{\frac{2}{\theta_2+1}})},
  }
  where $\te_m\in\{0,1\}$ for $m=1,2$.
  We then apply the same argument as in the estimates for $A_j$ and $B_j$.
  This completes the proof.
\end{proof}

\begin{proof}[Proof of Proposition \ref{prop_RefStri}]
  The proof closely follows that of Proposition \ref{prop_bistri}.
  It suffices to consider the case $N\gg 2^6$, since the case $N\les 2^6$ follows directly from the Bernstein inequality.
  As in the proof of Proposition \ref{prop_bistri}, we define small time intervals $\{I_{j,N}\}_{j\in J_N}$ such that $\bigcup_{j\in J_N} I_j=[0,T]$, $|I_j|\sim N^{-\ga}T$, and $\# J_N \sim N^\ga$.
  Since $u\in C([0,T];H^{s-}(\R))$ by Remark \ref{rem_conti}, for each $j\in J_N$ there exists $\tilde{t}_{j,N}\in I_{j,N}$ at which $\|P_N u(t)\|_{L_x^2}$ attains its minimum on $I_{j,N}$.
  In what follows, we simply write $t_j=t_{j,N}$, $\tilde{t}_j=\tilde{t}_{j,N}$, and $N(k)=N/16k$ for $k\in\N$.
  On each $I_{j,N}$, we rewrite the equation \eqref{eq2} in the Duhamel formula to obtain
  \EQQS{
    P_N u(t)
    =U_\al(t-\tilde{t}_j) P_N u(\tilde{t}_j)
     +\int_{\tilde{t}_j}^t U_\al(t-t')\p_x P_N \tilde{f}(u)dt'.
  }
  The Strichartz estimate \eqref{stri_1}, together with the H\"older inequality in time and \eqref{eq3.16} shows that
  \EQQS{
    &\|P_N u\|_{L^2(I_j;L^\I)}\\
    &\les N^{\frac{\ga+1-\al}{4}}T^{-\frac{1}{4}}
      \|P_N u\|_{L^2(I_j;L^2)}
      +T^{\frac{1}{4}}N^{-\frac{\ga}{4}}
      \left\|\int_{\tilde{t}_j}^t U_\al(t-t')\p_x P_N \tilde{f}(u)dt'\right\|_{L^4(I_j;L^\I)}.
  }
  By the Strichartz estimate \eqref{stri_2}, we have, for any function $g\in L^{\frac{4}{4-\te}}(I_j;L^{\frac{2}{1+\te}})$,
  \EQQS{
    \left\|\int_{t_{j,\min}}^{t_{j,\max}}
      U_\al(t-t')P_{\sim N} g(t',x)dt'\right\|_{L^4(I_j;L^\I)}
    \le CN^{-\frac{\al-1}{4}\te}
      \|g\|_{L^{\frac{4}{4-\te}}(I_j;L^{\frac{2}{1+\te}})},
  }
  where $\te=0,1$, $t_{j,\min}=\min I_{j,N}$, and $t_{j,\max}=\max I_{j,N}$.
  Now, we take the decompositions \eqref{decom1} and \eqref{decom2} into account, and apply the Christ-Kiselev lemma (Lemma \ref{lem_ck}) to get
  \EQQS{
    &T^{\frac{1}{4}}N^{-\frac{\ga}{4}}
     \left\|\int_{\tilde{t}_j}^t U_\al(t-t')\p_x P_N \tilde{f}(u)dt'\right\|_{L^4(I_j;L^\I)}\\
    &\les \sum_{k=2}^{N(2)}\frac{|f^{(k}(0)|}{k!}
     \sum_{l=1}^{k+2} T^{\frac{1}{4}}
     N^{-\frac{\al-1}{4}(\te_l+1)+1-\frac{\ga}{4}}
     \|V_l\|_{L^{\frac{4}{4-\te_l}}(I_j;L^{\frac{2}{1+\te_l}})}\\
    &\quad+T^{\frac{1}{4}}N^{-\frac{\al-1}{4}+1-\frac{\ga}{4}}
     \|P_{\sim N}f_{>N(2)}(u)\|_{L^{1}(I_j;L^{2})},
  }
  where $\te_1=0$, and $\te_l=1$ for $l\ge 2$.
  For $V_1$, by taking $T>0$ sufficiently small so that $G(K)T^{\frac{1}{2}}\ll 1$, we see from Proposition \ref{prop_bistri3} that
  \EQQS{
    &T^{\frac{1}{4}}
    N^{-\frac{\al-1}{4}+1-\frac{\ga}{4}}
     \|V_1\|_{L^{1}(I_j;L^{2})}\\
    &\les k K^{k-2} T^{\frac{3}{4}}
    N^{-\frac{\al-1}{4}+1-\frac{3\ga}{4}}
     \|P_N u P_{\le N(2)} u\|_{L^{2}(I_j;L^{2})}\\
    &\les k K^{k-2}G(K) T^{\frac{3}{4}} N^{\frac{\ga+1-\al}{4}+1-\frac{\al}{2}-\frac{\ga}{2}}
    (T^{-\frac{1}{2}}\|P_N u\|_{L^2(I_j;L^2)}
    +N^{-\frac{\ga}{2}-s-\e}\|u\|_{L_T^\I H_x^s})\\
    &\les k K^{k-2}G(K) T^{\frac{3}{4}}
     N^{\frac{\ga+1-\al}{4}}
    (T^{-\frac{1}{2}}\|P_N u\|_{L^2(I_j;L^2)}
    +N^{-\frac{\ga}{2}-s-\e}\|u\|_{L_T^\I H_x^s}),
  }
  since $\ga\ge 2-\al$.
  By \eqref{eq3.27}, \eqref{eq3.21}, \eqref{eq3.22} and \eqref{eq3.26}, we can close the estimate for $V_l$ with $l\ge 2$ and $f_{>N(2)}(u)$, and obtain
  \EQQS{
    &T^{\frac{1}{4}}N^{-\frac{\ga}{4}}
     \left\|\int_{\tilde{t}_j}^t U_\al(t-t')\p_x P_N \tilde{f}(u)dt'\right\|_{L^4(I_j;L^\I)}\\
    &\le CG(K)^2 T^{\frac{3}{4}}
      N^{\frac{\ga+1-\al}{4}}
      \left(T^{-\frac{1}{2}}\|P_N u\|_{L^2(I_j; L^2)}
       +N^{-\frac{\ga}{2}-s-\e}\|u\|_{L_T^\I H_x^s}\right).
  }
  Finally, collecting the above estimates, we have
  \EQQS{
    \| P_{N} u\|_{L_T^2 L_x^\I}
    &=\left[\sum_{j\in J_N} \|P_N u\|_{L^2(I_j;L^\I)}^2
      \right]^{\frac{1}{2}}\\
    &\le CT^{-\frac{1}{4}}N^{\frac{\ga+1-\al}{4}}
      \|P_N u\|_{L_{T,x}^2}
      +CG(K)^2 T^{\frac{1}{4}}
      N^{\frac{\ga+1-\al}{4}}\|P_N u\|_{L_{T,x}^2}\\
    &\quad+CG(K)^2 T^{\frac{3}{4}}
      N^{\frac{\ga+1-\al}{4}-s-\e}\|u\|_{L_T^\I H_x^s}.
  }
  Rewriting the implicit constant simply as $G(K)$, we conclude the proof.
\end{proof}

\begin{proof}[Proof of Proposition \ref{prop_bistri4}]
  The proof is identical to that of Proposition  \ref{prop_bistri}, with \eqref{cor2} replaced by \eqref{cor1}.
\end{proof}

\section{A priori estimate}\label{sec_apri}

In this section, we state the a priori estimate for the solution of \eqref{eq1}--\eqref{initial}.

\begin{prop}\label{prop_apri1}
  Let $\{\om_N^{(\de)}\}$ be an acceptable frequency weight.
  Let $0<T<1$, $\al\in[1,2]$, $s(\al)\le s\le 2$ with $s>1/2$.
  Let $s\ge s_0:=s(\al)\vee(1/2+)$.
  Let $u\in L_T^\I H_\om^s$ be a solution to \eqref{eq1} emanating from $u_0\in H_\om^s (\R)$ on $[0,T]$ and satisfies $\|u\|_{L_T^\I H_x^{s_0}}\le K$ for some $K\ge 1$.
  Then, for sufficiently small $T>0$ satisfying $G(K)T^{\frac{1}{2}}\ll 1$, there exist constants $C=C(\|u\|_{L_T^\I H_x^{s_0}})>0$ and $\te>0$
  such that
  \EQS{\label{eq_apri}
    \|u\|_{L_T^\I H_\om^s}^2
    \le \|u_0\|_{H_\om^s}^2
     +CT^\theta \|u\|_{Z_{\om,T}^s}\|u\|_{L_T^\I H_\om^s},
  }
  where $G:\R_{\ge 0}\to\R_{\ge 0}$ is a smooth increasing function.
\end{prop}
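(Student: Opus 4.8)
The plan is to run a frequency-localized energy estimate at the $H_\om^s$ level and to classify the resulting multilinear interactions exactly as in \cite{MT3}, but with the refined bilinear and linear Strichartz estimates used there replaced by the improved ones of Section \ref{sec_boot}. First I would apply $P_N$ to \eqref{eq1}, pair the result with $\om_N^2(1\vee N)^{2s}P_N u$ and integrate over $]0,t[\times\R$; since the Fourier symbol $-ip_{\al+1}(\xi)$ of $L_{\al+1}$ is purely imaginary, $L_{\al+1}$ is skew-adjoint and its contribution vanishes, leaving
\[
  \|P_N u(t)\|_{L_x^2}^2=\|P_N u_0\|_{L_x^2}^2-2\int_0^t\int_\R P_N u\,\p_x P_N f(u)\,dx\,dt'.
\]
Multiplying by $\om_N^2(1\vee N)^{2s}$ and summing over dyadic $N$, the proposition reduces to proving
\[
  \sum_N \om_N^2(1\vee N)^{2s}\Big|\int_0^t\int_\R P_N u\,\p_x P_N f(u)\,dx\,dt'\Big|
  \les G(K)\,T^{\te}\,\|u\|_{Z_{\om,T}^s}\,\|u\|_{L_T^\I H_\om^s}
\]
for some $\te>0$. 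Using Hypothesis \ref{hyp2} I would expand $f$ in its everywhere-convergent Taylor series; as in \eqref{eq2} it is convenient first to absorb $f(0)$ and $f'(0)\p_x$ into the linear part and work with $\tilde f$, and then for each monomial $u^k$ to Littlewood--Paley decompose $P_N(u^k)=\sum P_N(P_{N_1}u\cdots P_{N_k}u)$ with $N_1\ge\cdots\ge N_k$. The analyticity of $f$, together with the product rule \eqref{eq2.2}, the Leibniz estimate \eqref{leibniz2} and the usual bookkeeping of binomial coefficients, will let all combinatorial constants be collected into a single smooth increasing $G$ depending only on $f$ and on $p_{\al+1}$.

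Next I would split the nonlinear contribution into the standard families of interactions. (i) When the output frequency is much smaller than the two largest inputs (so $N\ll N_1\sim N_2$), one distributes $\p_x$, applies a bilinear estimate to the pair $P_{N_1}u\,P_{N_2}u$ and uses $H^{1/2+}\hookrightarrow L^\I$ on the remaining factors; this is harmless. (ii) For genuine high--low interactions $N_1\sim N\gg N_2$, I would integrate by parts via Lemma \ref{lem_comm1}, rewriting the pairing through the structure $\Pi$ of \eqref{def_pi} so that $\p_x$ falls on the low-frequency factor, whose $W^{1,\I}$-norm is controlled by $\|u\|_{L_T^\I H_x^{1/2+}}\le K$; the commutator thus produced is absorbed by Lemma \ref{lem_comm2}. (iii) Non-resonant interactions are handled by Bourgain-type estimates based on \eqref{bil2}, after extending $u$ off $[0,T]$ by the operator $\rho_T$ of Lemma \ref{extensionlem}; this is where the $X_\om^{s-1,1}$ part of $\|u\|_{Z_{\om,T}^s}$ enters the right-hand side. (iv) The separated-frequency interactions that remain are controlled by the refined bilinear estimate \eqref{eq_bistri2} of Proposition \ref{prop_bistri2}. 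In each of (i)--(iv) the resulting bound carries a positive power of $T$ and at least one factor of $\|u\|_{Z_{\om,T}^s}$ and one of $\|u\|_{L_T^\I H_\om^s}$, all other factors being bounded by $G(K)$; these are precisely the contributions that can already be treated at the level $s>1/2$.

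The main obstacle is the family of \emph{bad} interactions: essentially $(P_{\sim N}u)^4(P_{\ll N}u)^{k-3}$ with two of the high-frequency factors carrying one sign and one the opposite, so that modulation gains are unavailable. After pairing against $\om_N^2N^{2s}P_N u$, the relevant quantity is schematically
\[
  \om_N^2\,N^{2s+1}\Big|\int_0^t\int_\R (P_{\sim N}u)^4(P_{\ll N}u)^{k-3}\,dx\,dt'\Big|,
\]
and I would estimate it by grouping the four factors $P_{\sim N}u$ into two bilinear expressions and applying the improved refined bilinear estimate \eqref{eq_bistri1} of Proposition \ref{prop_bistri} to each (equivalently, one may feed the improved refined linear estimate \eqref{eq_RefStri1} into the $L_T^4L_x^\I$ Strichartz inequality \eqref{stri_1}). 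Each application converts two $L_x^\I$ factors into $L^2$-in-time factors at the cost $T^{1/4}N^{(\ga+1-\al)/4}$ with $\ga=\max\{2-\al,1-\al/3\}$, in exchange for the $N^{-s}$ decay of the Sobolev norms; a careful bookkeeping of the exponents then shows that the resulting bound is summable in $N$ with a remaining positive power of $T$ precisely when $s\ge s(\al)$ as defined in \eqref{def_salpha}. Here the smallness hypothesis $G(K)T^{1/2}\ll1$ plays a double role: it is exactly what makes Proposition \ref{prop_bistri3}, hence \eqref{eq_bistri1}, applicable on each of the $\sim N^{\ga}$ subintervals of length $\sim N^{-\ga}T$ into which $[0,T]$ is partitioned, and re-summing the squares of the per-interval bounds (as in the proof of Proposition \ref{prop_bistri}) restores control on all of $[0,T]$. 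Collecting the contributions of every interaction type, each with its positive power $T^{\te}$, and summing the Taylor series in $k$ using the analyticity of $f$, yields \eqref{eq_apri}.
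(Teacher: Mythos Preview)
Your outline handles the easy interactions and the worst case of four comparable high frequencies (the paper's Case (2)) correctly, but there is a genuine gap in the intermediate configuration $N_1\sim N_2\sim N\gg N_3$ with $N_3\lesssim kN_4$ (the paper's Case (3)). You seem to file this under your item (iv) and treat it with \eqref{eq_bistri2}, but that cannot be done directly: the operator $\Pi(P_{N_1}u,P_{N_2}u)$ has symbol $\phi_N^2(\xi_1)\xi_1+\phi_N^2(\xi_2)\xi_2$, which genuinely couples $\xi_1$ and $\xi_2$, so one cannot simultaneously keep the commutator gain of $\Pi$ and re-pair $P_{N_1}u$ with the separated factor $P_{N_3}u$ inside a $\Lambda_a$. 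Nor does your ``bad case'' recipe work here: applying \eqref{eq_bistri1} to $\Pi(P_{N_1}u,P_{N_2}u)$ and to $P_{N_3}uP_{N_4}u$ uses only the first-order commutator gain $|\xi_1+\xi_2|\lesssim N_3$, and for $\alpha\in[1,\tfrac32)$ the loss $N^{(\gamma+1-\alpha)/4}=N^{(3-2\alpha)/4}>1$ in \eqref{eq_bistri1} leaves an uncontrolled positive power of $N$ that does not close at $s=s(\alpha)$.

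The missing ingredient is a \emph{second-order} Taylor expansion of the symbol, as in \eqref{defai}: one writes
\[
  \phi_N^2(\xi_1)\xi_1+\phi_N^2(\xi_2)\xi_2
  =a_1(\xi_2)(\xi_1+\xi_2)+a_2(\xi_1,\xi_2)\,\frac{(\xi_1+\xi_2)^2}{N}.
\]
Because $a_1$ depends on $\xi_2$ alone, after passing $\xi_1+\xi_2=-\sum_{l\ge3}\xi_l$ to the low-frequency factors one may legitimately apply the separated estimate \eqref{eq_bistri2} to the pairs $(P_{N_1}u,\partial_xP_{N_3}u)$ and $(P_{N_2}(\check a_1*u),P_{N_4}u)$, and this closes for any $s>\tfrac12$. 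The $a_2$ piece carries an \emph{extra} factor $(\xi_1+\xi_2)/N\lesssim kN_3/N$---a double commutator gain---and it is precisely this additional power of $N^{-1}$ that compensates the loss of \eqref{eq_bistri1} and lets the estimate close at $s\ge s(\alpha)$ for all $\alpha\in[1,2]$. Without this decomposition your argument only goes through for $\alpha>\tfrac32$ (cf.\ the Remark following the paper's proof).
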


\begin{rem}
  Recall that $s(\al)$ is defined in \eqref{def_salpha}.
\end{rem}

\begin{proof}
  The strategy of the proof is the same as that of Proposition 4.6 in \cite{MT3}.
  On the other hand, we will use better estimates \eqref{eq_bistri1} and \eqref{eq_bistri2} than those were used in \cite{MT3}.
  The key point is how to recover the derivative loss.
  We recall that the exact same proof as in \cite{MT3} for the nonresonant interaction works in our case.
  Therefore, we mainly concentrate on the resonant interactions.
  Let $N\in 2^\N \cup\{0\}$ and $\e:=\min\{\frac{1}{4},\frac{1}{10}(s_0-\frac{1}{2})\}$.
  By the equation, we have
  \EQQS{
    \frac{d}{dt}\|P_N u(t)\|_{L_x^2}^2
    =-2\int_\R \p_x P_N \tilde{f}(u) P_N u dx
  }
  for $t\in [0,T]$, where $\tilde{f}(x)=f(x)-f(0)-f'(0)x$.
  Integrating in time from $0$ to $t$, and taking the $H_\om^s$-norm, we obtain
  \EQS{\label{eq4.1}
    \begin{aligned}
      &\|u(t)\|_{H_\om^s}^2\\
      &=\|u_0\|_{H_\om^s}^2
       +2 \sum_{N\in 2^\N\cup\{0\}}\sum_{k=2}^\I \om_N^2  (1\vee N)^{2s}
        \frac{f^{(k)}(0)}{k!}
       \iint_{[0,t]\times\R} u^k \p_x P_N^2 u dx dt'.
    \end{aligned}
  }
  Here, we used Hypothesis \ref{hyp2}.
  The summation over $N\les 1$ can be closed easily.
  Indeed, by a product estimate, it holds that
  \EQQS{
    \bigg|\sum_{N\les 1}\om_N^2  (1\vee N)^{2s}
    \iint_{[0,t]\times\R} \p_x P_N \tilde{f}(u) P_N u dx dt'\bigg|
    &\les \|\tilde{f}(u)\|_{L_T^2 H_\om^s}\|u\|_{L_T^2 H_\om^s}\\
    &\le TG(K)\|u\|_{L_T^\I H_\om^s}^2.
  }
  In what follows, we consider the summation over $N\gg 1$ in \eqref{eq4.1}.
  As usual, first we use a symmetrization argument, which allows us to apply integration by parts.
  By the Plancherel theorem and the Littlewood-Paley decompositioin, we obtain
  \EQQS{
    &\int_\R u^{k}\p_x P_N^2 u dx\\
    &=\frac{1}{k+1}\sum_{N_1,\dots,N_{k+1}}
     \int_{\xi_1+\cdots+\xi_{k+1}=0}
     \bigg(i\sum_{j=1}^{k+1} \phi_{N}^2(\xi_j)\xi_j\bigg)
     \prod_{j=1}^{k+1}\phi_{N_j}(\xi_j)\hat{u}(\xi_j)d\xi_1\cdots d\xi_{k+1}.
  }
  By symmetry, we may assume the following ordering of frequencies: $N_1\ge N_2\ge N_3$ if $k=2$, $N_1\ge N_2\ge N_3\ge N_4$ if $k=3$, and $N_1\ge N_2\ge N_3\ge N_4\ge \max_{j\ge 5}N_j$ if $k\ge 4$.
  We denote this condition by $(*)$ for simplicity.
  Notice that the cost of imposing this reduction is at most $(k+1)^4$.
  Moreover, it holds that $N_2\gts N_1/k$.
  See also the proof of Proposition 4.6 in \cite{MT22} for details.
  Under this observation, we are reduced to estimating the followings:
  \EQQS{
    I_{k,0}:
    &=\sum_{N\gg 1}
     \sum_{N_1,\dots, N_{k+1}:(*)}\om_N^2 N^{2s}
     \iint_{[0,t]\times\R} \Pi(P_{N_1}u,P_{N_2}u) \prod_{j=3}^{k+1} P_{N_j} u dx dt',\\
    I_{k,l}:
    &=\sum_{N\gg 1}
     \sum_{N_1,\dots, N_{k+1}:(*)}\om_N^2 N^{2s}
     \iint_{[0,t]\times\R} \p_x P_N^2 P_{N_l} u \prod_{\substack{j=1,\\j\neq l}}^{k+1} P_{N_j} u dx dt'
  }
  for $l=3,\dots,k+1$, where $\Pi(\cdot,\cdot)$ is defined in \eqref{def_pi}.
  Below, we consider only the terms $ I_{k,0}$ that are the most difficult to estimate.
  When $N_3\les 1$, Lemma \ref{lem_comm1} and the Sobolev embedding imply that
  \EQS{\label{eq4.2}
    \begin{aligned}
    |I_{k,0}|
    &\les K^{k-1} \sum_{N_1\sim N_2\gtrsim 1\gts N_3}
      \om_N^2 N_1^{2s}\LR{N_3}
      \|P_{N_1} u\|_{L_{T,x}^2}\|P_{N_2} u\|_{L_{T,x}^2}\\
    &\les T K^{k-1}\|u\|_{L_T^\I H_\om^s}^2.
    \end{aligned}
  }
  Therefore, in what follows, we always assume $N_3\gg 1$.
  It is well-known that the case $k=2$ in both terms $I_{2,0}$ and $I_{2,3}$ are nonresonant, and can be estimated at the regularity $s>1/2$ by applying the Bourgain's type estimate developed in \cite{MV15}.
  So, it suffices to consider $k\ge 3$.
  First, we estimate $I_{k,0}$ by a case by case analysis.
  More precisely, we consider three cases (1) $N_3\gg kN_4$, (2) $N_2\les N_3\les kN_4$ and (3) $N_2\gg N_3$ and $N_3\les kN_4$.
  Notice that we always have $N/4\le N_1\les kN_2$ due to  impossible interactions.
  When $N_3\gg kN_4$, the nonlinear interaction is nonresonant, and this case has already been treated in  previous studies.
  Therefore, we omit the details here.
  For a complete argument, see Case 2 in the proof of Proposition 4.8 in \cite{MT22}.
  Note that for $ \alpha>1 $ we have a gain of a negative power of the highest frequency $N_1 $ when estimating this term.
  In the two other cases, we have to apply the bilinear refined estimate for non-separated frequency functions and thus we will take advantage of Proposition \ref{prop_bistri}.
  In Case (2), where $N_2\les N_3\les kN_4$, we notice that $N_3\gts N_1/k$ since $N_2\gts N_1/k$.
  We apply Proposition \ref{prop_bistri} to the terms $\|\Pi(P_{N_1}u,P_{N_2}u)\|_{L_{T,x}^2}$ and $\|P_{N_3}u P_{N_4}u\|_{L_{T,x}^2}$, and then use the Young inequality to get
  for sufficiently small $T>0$,
  \EQQS{
    I_{k,0}
    &\les K^{k-3} \sum_{N\gg1}
      \sum_{\substack{N_1\ge N_2\ge N_3\ge N_4,\\  N_1\les kN_2\les kN_3\les k^2 N_4}}
      \om_N^2 N^{2s}
      \|\Pi(P_{N_1}u,P_{N_2}u)\|_{L_{T,x}^2}
      \|P_{N_3}uP_{N_4}u\|_{L_{T,x}^2}\\
    &\les T^{\frac{1}{2}} K^{k-2} G(K)^2
      \|u\|_{L_T^\I H_\om^s}
      \sum_{\substack{N_1\ge N_2\ge N_3\ge N_4,\\  N_1\les kN_2\les kN_3\les k^2 N_4, N_3\gg1}}
      \om_{N_1}^2\om_{N_2}^{-1} N_1^{2s+1+\frac{\ga+1-\al}{4}}N_2^{-s}\\
    &\quad\quad\quad\quad\times(T^{-\frac{1}{2}}\|P_{N_1} u\|_{L_{T,x}^2}
       + \om_{N_1}^{-1} N_1^{-s-\e}\|u\|_{L_T^\I H_\om^s})\\
    &\quad\quad\quad\quad\quad\quad\times
     N_3^{\frac{\ga+1-\al}{4}}N_4^{-s_0}
     (T^{-\frac{1}{2}}\|P_{N_3} u\|_{L_{T,x}^2}
       + N_3^{-s_0-\e}\|u\|_{L_T^\I H_x^{s_0}})\\
    &\les T^{\frac{1}{2}} k^6 2^k K^{k-2} G(K)^2
      \|u\|_{L_T^\I H_\om^s}
      \sum_{1\ll N_1\les kN_3}
      \frac{N_1}{kN_3}\\
    &\quad\quad\quad\quad\times(T^{-\frac{1}{2}}\om_{N_1}N_1^s
      \|P_{N_1} u\|_{L_{T,x}^2}
       +N_1^{-\e}\|u\|_{L_T^\I H_\om^s})\\
    &\quad\quad\quad\quad\quad\quad\quad\quad\times(T^{-\frac{1}{2}}N_3^{s_0}\|P_{N_3} u\|_{L_{T,x}^2}
       + N_3^{-\e}\|u\|_{L_T^\I H_x^{s_0}})\\
    &\les T^{\frac{1}{2}} k^6 2^k K^{k-2} G(K)^2
    \|u\|_{L_T^\I H_\om^s}^2,
  }
  since $s_0=s(\al)\vee(1/2+)$ and $k^s\le k^2$.
  Here, we used the properties of an acceptable frequency weight $\om_N^{(\de)}$ (see Definition \ref{def_AFW}).
  In particular, we used the condition $1<\de \le 2$ to obtain $\om_{N_2}^{-1}\les \de^k \om_{N_1}^{-1}\les 2^k \om_{N_1}^{-1}$ as $N_1\les kN_2$.

  Finally, we consider Case (3) where $N_2\gg N_3$ and $N_3\les kN_4$.
  In this configuration, we would like to employ both integration by parts (in the form of Lemma \ref{lem_comm1}) and the bilinear estimate \eqref{eq_bistri2} simultaneously, thereby avoiding the use of Proposition \ref{prop_bistri}.
  However, we do not know whether this argument can be rigorously justified, and in fact, we will also make use of Proposition \ref{prop_bistri} in that case.
  Following the approach taken in Case 3 in the proof of Proposition 4.6 in \cite{MT3}, we decompose the symbol $\phi_N^2(\xi_1)\xi_1+\phi_N^2(\xi_2)\xi_2$ into two parts.
  The first part behaves as a commutator term and allows us to apply \eqref{eq_bistri2}, which closes the estimate.
  The second part does not allow us to apply \eqref{eq_bistri2}, but \eqref{eq_bistri1} is still applicable.
  If this term behaved as a simple commutator estimate, we could not close the estimates for  $ \alpha \le 3/2 $, since \eqref{eq_bistri1} has some loss of regularity.
  The crucial point is that this term  actually behaves as a double commutator estimate, which enables us to close the estimates even for $ \alpha\in [1,3/2] $.

  More precisely, by decomposing the symbol $\phi_N^2(\xi_1)\xi_1+\phi_N^2(\xi_2)\xi_2$ using a Taylor expansion, we infer that for any $(\xi_1,\xi_2)\in \R^2 $ with $ |\xi_1|\sim |\xi_2|\sim N $,
  there exists $\psi(\xi_1,\xi_2)\in\R$ such that $|\psi(\xi_1,\xi_2)|\sim N$ and
  \EQS{\label{defai}
    \phi_N^2(\xi_1)\xi_1+\phi_N^2(\xi_2)\xi_2
    =a_1(\xi_2)(\xi_1+\xi_2)+
     a_2(\xi_1,\xi_2)\frac{(\xi_1+\xi_2)^2}{N},
  }
  where
  \EQS{\label{def_a1}
    \begin{aligned}
      a_1(\xi_2)
      &= \phi_N^2(\xi_2)+2\phi_N(\xi_2)\phi'(\xi_2/N)\frac{\xi_2}{N},\\
      a_2(\xi_1,\xi_2)
      &= 2\phi_N(\psi(\xi_1,\xi_2))\phi'(\psi(\xi_1,\xi_2)/N)
       + (\phi'(\psi(\xi_1,\xi_2)/N))^2 \frac{\psi(\xi_1,\xi_2)}{N}\\
      &\quad\quad+ \phi_N(\psi(\xi_1,\xi_2)) \phi''(\psi(\xi_1,\xi_2)/N)
       \frac{\psi(\xi_1,\xi_2)}{N}.
    \end{aligned}
  }
  Note that the above identities imply the estimate
  $\|a_1\|_{L^\infty(\R)}+\|a_2\|_{L^\infty(\R^2)}\les 1$.
  Moreover, we observe that
  \EQQS{
    &\int_{\xi_1+\cdots+\xi_{k+1}=0}
     a_1(\xi_2)(\xi_1+\xi_2)
     \prod_{j=1}^{k+1}\phi_{N_j}(\xi_j)\hat{u}(\xi_j)d\xi_1\cdots d\xi_{k+1}\\
    &=-\int_{\xi_1+\cdots+\xi_{k+1}=0}
     a_1(\xi_2)\bigg(\sum_{l=3}^{k+1}\xi_l\bigg)
     \prod_{j=1}^{k+1}\phi_{N_j}(\xi_j)\hat{u}(\xi_j)d\xi_1\cdots d\xi_{k+1}.
  }
  Notice that $a_1$ is a function of one variable, which allows us to apply the bilinear estimate \eqref{eq_bistri2} to $P_{N_1}u \p_x P_{N_j}u$ for $j\ge 3$ and $P_{N_2}(\check{a}_1*u) P_{N_4}u$, where $\check{a}_1$ is the Fourier inversion of $a_1$.
  On the other hand, we gain a factor $\frac{\xi_1+\xi_2}{N}$ for the contribution of $a_2$, and we make use of the bilinear estimate \eqref{eq_bistri1} to $\La_{a_2}(P_{N_1}u,P_{N_2}u)$ and $P_{N_3}uP_{N_4}u$.
  For the term involved with $a_1$, we see from the H\"older inequality, the Sobolev embedding, \eqref{eq_bistri2} and \eqref{eq2.2} that
  \EQQS{
    &\sum_{\substack{N_1\sim N_2\gg N_3\ge N_4,\\ N_1,N_3\gg 1, N_3\les kN_4 }}
      \om_{N_1}^2 N_1^{2s}
      \bigg|\iint_{[0,t]\times \R}
      P_{N_1}u P_{N_2}(\check{a}_1 * u) \p_x \bigg(\prod_{j=3}^{k+1}P_{N_j}u\bigg) dxdt'\bigg|\\
    &\les k K^{k-3} \sum_{\substack{N_1\sim N_2\gg N_3\ge N_4,\\ N_1,N_3\gg 1, N_3\les kN_4 }}
     \om_{N_1}^2 N_1^{2s}N_3\|P_{N_1}u P_{N_3}u\|_{L_{T,x}^2}
     \|P_{N_2}(\check{a}_2*u) P_{N_4}u\|_{L_{T,x}^2}\\
    &\lesssim k K^{k-3} T^{\theta-1}
     \sum_{\substack{N_1\sim N_2\gg N_3\ge N_4,\\ N_1,N_3\gg 1, N_3\les kN_4 }}
     \om_{N_1}^2 N_1^{2s-(1-\theta)(\al-1)}N_3^{1+\theta}
     U_{N_1}^{0,2}U_{N_2}^{0,2}U_{N_3}^{0,\I}U_{N_4}^{0,\I}\\
    &\lesssim k^2 K^{k-1} T^\theta \|u\|_{L_T^\I H_\om^s}^2,
  }
  where $\te\in(0,1)$ satisfies $s_0>1/2+2\te$, and
  \EQS{\label{def_U}
    U_{M}^{s,p}
    :=\|P_M u\|_{L_T^p H^s}
      +\|P_M f(u)\|_{L_T^p H^s}
  }
  for $s\ge 0$, $2\le p\le \I$.
  Next, we consider the contribution of the term involved with $a_2$ in \eqref{defai}.
  Noticing that $|\xi_1+\xi_2|=|\xi_3+\cdots+\xi_{k+1}|\les kN_3$,
  the H\"older inequality, the Sobolev embedding, \eqref{eq_bistri1}, and Remark \ref{rem_sep1} show that this contribution can be bounded by
  \EQQS{
    &\sum_{\substack{N_1\sim N_2\gg N_3\ge N_4\\ N_1,N_3\gg 1, N_3\les k N_4}}
    \sum_{M\les kN_3}
      \om_{N_1}^2 N_1^{2s-1}
      \bigg|\iint_{[0,t]\times \R}\p_x^2 P_M \La_{a_2}(P_{N_1}u,P_{N_2}u)
      \prod_{j=3}^{k+1}P_{N_j}udxdt'\bigg|\\
    &\les K^{k-3}
    \sum_{\substack{N_1\sim N_2\gg N_3\ge N_4\\ N_1,N_3\gg 1, N_3\les k N_4}}
    \sum_{M\les kN_3}
      \om_{N_1}^2 N_1^{2s-1} M^2
      \|\La_{a_2}(P_{N_1}u,P_{N_2}u)\|_{L_{T,x}^2}
      \|P_{N_3}uP_{N_4}u\|_{L_{T,x}^2}\\
    &\les k K^{k-2} T^{\frac{1}{2}} G(K) \|u\|_{L_T^\I H_\om^s}
     \sum_{N_1\gg N_3}
     \om_{N_1} N_1^{s-1}N_3^2 N_1^{\frac{\ga+1-\al}{4}}
     N_3^{\frac{\ga+1-\al}{4}-s_0}\\
    &\quad\quad\quad\quad\times
     \Big(\|P_{N_1} u\|_{L_{T,x}^2}
     +\om_{N_1}^{-1} N_1^{-s-\e}\|u\|_{L_T^\I H_\om^s}\Big)
     \Big(\|P_{N_3} u\|_{L_{T,x}^2}
      +N_3^{-s_0-\e}\|u\|_{L_T^\I H_x^{s_0}}\Big)\\
    &\les k K^{k-2} T^{\frac{1}{2}} G(K) \|u\|_{L_T^\I H_\om^s}
     \sum_{N_1\gg N_3}
     \bigg(\frac{N_3}{N_1}\bigg)^{\frac{3}{4}}
     \om_{N_1} N_1^s N_3^{s_0}\\
    &\quad\quad\quad\quad\times
     \Big(\|P_{N_1} u\|_{L_{T,x}^2}
     +\om_{N_1}^{-1} N_1^{-s-\e}\|u\|_{L_T^\I H_\om^s}\Big)
     \Big(\|P_{N_3} u\|_{L_{T,x}^2}
      +N_3^{-s_0-\e}\|u\|_{L_T^\I H_x^{s_0}}\Big)\\
    &\le k K^{k-1} T^{\frac{1}{2}} G(K) \|u\|_{L_T^\I H_\om^s}^2,
  }
  where we used that $2s_0\ge 1+\frac{\gamma+1-\alpha}{2}$, $T>0$ is sufficiently small, and $\e>0$ satisfies $s_0>1/2+2\te+\e$.
  In conclusion, collecting the above estimates and using Hypothesis \ref{hyp2}, we have \eqref{eq_apri}.
  For more details, see the proof of Proposition 4.6 in \cite{MT3}.
  One only has to replace the the bilinear estimates in \cite{MT3} by the bilinear estimates \eqref{eq_bistri1} and \eqref{eq_bistri2}.
  This completes the proof.
\end{proof}

\begin{rem}
For $ \alpha\in ]\frac32,2] $, we can simplify the treatment of Case (3) in the above proof.
Indeed, since \eqref{eq_bistri1} does not exhibit any loss of regularity, we can directly apply \eqref{eq_bistri1} to both $ \|\Pi(P_{N_1}u, P_{N_2} u)\|_{L_{T,x}^2}$ and $\|P_{N_3}u P_{N_4}u\|_{L_{T,x}^2}$.
More precisely, it suffices to consider only the first-order term in the Taylor expansion, unlike in \eqref{defai}:
\EQQS{
  \phi_N^2(\xi_1)\xi_1+\phi_N^2(\xi_2)\xi_2
  =\left(\phi_N(\nu)^2
    +2\phi_N(\nu)
     \phi'\left(\frac{\nu}{N}\right)\frac{\nu}{N}\right)
   (\xi_1+\xi_2),
}
for some $\nu\in (|\xi_1|,|\xi_2|)$ or $\nu\in (|\xi_2|,|\xi_1|)$.
This reflects that $\Pi(P_{N_1}u, P_{N_2} u) $ behaves as a commutator term.
\end{rem}

\section{Estimate for the difference}
\label{sec_diff}

In this section, we discuss estimates for the difference of two solutions.
To this end, we prove several key estimates in the spirit of Proposition \ref{prop_bistri}.
However, when dealing with the difference, the equation exhibits less symmetry than in the case of a single solution, which requires a more delicate analysis.
We begin by stating the bilinear estimate for non-separated frequency functions, which corresponds to Proposition \ref{prop_bistri}.

\begin{prop}\label{prop_bistri_d1}
  Let $0<T<1$, $\al\in[1,2]$, $\ga=\max\{2-\al,1-\al/3\}$, $N_1\ge N_2\ge 1$, $s(\al)\le s\le 2$ with $s>1/2$, and $0<\e\ll 1$ so that $s>1/2+\e$.
  Let $a\in L^\I(\R^2)$ be such that $\|a\|_{L^\I}\lesssim 1$.
  Finally, let $u_1,u_2\in L^\I(]0,T[;H^s(\R))$ satisfy \eqref{eq1} on $]0,T[\times \R$ and
  \EQQS{
    \|u_1\|_{L_T^\I H_x^s}+\|u_2\|_{L_T^\I H_x^s}\le K
  }
  for some $K\ge 1$.
  Then, there exists a smooth increasing function $G:\R_{\ge 0}\to \R_{\ge 0}$ such that for sufficiently small $T>0$ satisfying $G(K)T^{\frac{1}{2}}\ll 1$, it holds that
  \EQS{\label{eq_bistri_d1}
    \begin{aligned}
      &\|\La_a(P_{N_1}w,P_{N_2}w)\|_{L_{T,x}^2}\\
      &\les T^{\frac{1}{4}} G(K) N_1^{\frac{\ga+1-\al}{4}}
      N_2^{1-s}
      \Big(T^{-\frac{1}{2}}\|P_{N_1}w\|_{L_{T,x}^2}
      +N_1^{1-s-\e}\|w\|_{L_T^\I H_x^{s-1}}\Big)
      \|w\|_{L_T^\I H_x^{s-1}},
    \end{aligned}
  }
  where $w=u_1-u_2$.
\end{prop}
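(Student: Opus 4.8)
The plan is to run the proof of Proposition~\ref{prop_bistri} with the single solution replaced throughout by the difference $w=u_1-u_2$, the only genuinely new ingredient being a difference version of Proposition~\ref{prop_bistri3}. First I would write down the equation for $w$: subtracting \eqref{eq1} for $u_1$ and $u_2$ and peeling off the linear part of the nonlinearity gives
\[
  \p_t w + (L_{\al+1}+f'(0)\p_x)w + \p_x\bigl(\tilde f(u_1)-\tilde f(u_2)\bigr)=0
  \quad\text{on }]0,T[\times\R ,
\]
with $\tilde f(x)=f(x)-f(0)-f'(0)x$; since $L_{\al+1}+f'(0)\p_x$ still satisfies Hypothesis~\ref{hyp1}, the Strichartz estimates \eqref{stri_1}--\eqref{stri_2} remain available. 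The point is then to use the factorization, legitimate by Hypothesis~\ref{hyp2},
\[
  \tilde f(u_1)-\tilde f(u_2)
  =\sum_{k\ge 2}\frac{f^{(k)}(0)}{k!}\bigl(u_1^{\,k}-u_2^{\,k}\bigr)
  =\sum_{k\ge 2}\frac{f^{(k)}(0)}{k!}\,w\sum_{j=0}^{k-1}u_1^{\,j}u_2^{\,k-1-j},
\]
so that, after Littlewood--Paley localization, every piece of the Duhamel term for $P_N w$ is a frequency-localized product carrying exactly one factor of $w$ and $k-1$ factors among $u_1,u_2$.

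The preliminary step is the difference analogue of Proposition~\ref{prop_bistri3}: for $M\ge N\ge 1$, a closed interval $I\subset[0,T]$ with $|I|\sim M^{-\ga}T$, and $v$ equal to $u_1$ or $u_2$,
\[
  \|P_N w\,P_{\le 2^{-5}N}v\|_{L^2(I;L^2)}
  \lesssim G(K)\,N^{-\frac{\al}{2}}\Bigl(T^{-\frac12}M^{\frac\ga2}\|P_N w\|_{L^2(I;L^2)}
  +N^{1-s-\e}\|w\|_{L_T^\I H_x^{s-1}}\Bigr).
\]
This is proved by the same Duhamel-plus-commutator scheme as Proposition~\ref{prop_bistri3}: write $P_N w$ in Duhamel form on $I$ about a point where $\|P_N w(t)\|_{L^2_x}$ is minimal --- available because $w\in C([0,T];H^{s-}(\R))$ by Remark~\ref{rem_conti} --- and expand each product $P_N(w\,u_1^{\,j}u_2^{\,l})$. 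The new feature is that the top frequency may sit either on the $w$-factor or on a $u$-factor. In the first case one extracts, exactly as in \eqref{decom2}, a term $P_{\sim N}\bigl(P_N w\,P_{\le 2^{-5}N}v\,(\text{lower-frequency factors})\bigr)$ together with a commutator controlled by Lemma~\ref{lem_comm2}, so that $\|P_N w\,P_{\le 2^{-5}N}v\|_{L^2_IL^2_x}$ reappears and the recursion closes for $\ga\ge 2-\al$. In the second case there is nothing to recurse on: one pairs the Duhamel term with the free low-frequency factor through \eqref{cor2} --- which supplies the gain $N^{-\al/2}$ --- and through the $L^4_tL^\infty_x$ inhomogeneous Strichartz estimate \eqref{stri_2} --- which supplies an extra $N^{-(\al-1)/4}$ at the cost of placing the nonlinearity in $L^{4/3}_IL^1_x$ --- and then closes the bound directly by H\"older, exploiting the decay $\|P_{\sim N}u_j\|_{L_T^\I L_x^2}\lesssim N^{-s}K$ and the $H^{s-1}$-bound on the low-frequency $w$-factor; a short computation shows the frequency budget balances precisely when $s\ge s(\al)$ and $s>1/2+\e$. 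The tail of the Taylor series is absorbed by a product estimate as in \eqref{eq3.26}.

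With \eqref{cor1} and this difference version of \eqref{eq_sep1} at hand, the rest is a verbatim adaptation of the proof of Proposition~\ref{prop_bistri}. The range $N_1\lesssim 2^6$ is dealt with by Bernstein and Young as in \eqref{eq3.28}. For $N_1\gg 2^6$ one partitions $[0,T]$ into $\sim N_1^{\ga}$ intervals $I_{j,N_1}$ of length $\sim N_1^{-\ga}T$, picks on each a point $\tilde t_j$ minimizing $\|P_{N_1}w(t)\|_{L^2_x}$, expresses $P_{N_1}w$ and $P_{N_2}w$ in Duhamel form there, and splits $\|\La_a(P_{N_1}w,P_{N_2}w)\|_{L^2(I_{j,N_1};L^2)}$ into the free$\times$free, free$\times$Duhamel, Duhamel$\times$free, and Duhamel$\times$Duhamel contributions. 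The free$\times$free term is bounded by \eqref{cor1}; the terms with a Duhamel factor are bounded by combining \eqref{cor1} with the inhomogeneous Strichartz estimate \eqref{stri_2}, the Leibniz rule \eqref{leibniz2} applied to $\tilde f(u_1)-\tilde f(u_2)$, the Christ--Kiselev lemmas (Corollaries~\ref{cor_CK1}--\ref{cor_CK2}), and the difference version of Proposition~\ref{prop_bistri3} to absorb the sub-terms where the top frequency lands on the $w$-factor. Squaring and summing over $j\in J_{N_1}$ gives \eqref{eq_bistri_d1}; the shift from $H^s$ to $H^{s-1}$ in the $w$-slots simply moves all frequency exponents by one.

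The main obstacle is the loss of symmetry. In the single-solution setting every factor in the expansion of $P_N(u^k)$ is the same $u$, so the top frequency is always an extractable $P_N u$ and the recursion through Proposition~\ref{prop_bistri3} is automatic; for $w=u_1-u_2$ the top frequency may instead sit on a $u_j$-factor, in which case the recursion is unavailable and the bound must be closed directly. This is exactly why the hypothesis here is the stronger $\|u_1\|_{L_T^\I H_x^s}+\|u_2\|_{L_T^\I H_x^s}\le K$ rather than a bound in $H^{1/2+}$: one needs the full $H^s$-regularity of the solutions $u_j$ (through $\|P_{\sim N}u_j\|_{L^2}\lesssim N^{-s}K$) to control the high-frequency $u$-factors. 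The remaining work is bookkeeping: tracking which factors are $w$ and which are $u_j$ across the Littlewood--Paley sub-cases, and re-verifying that the frequency budget closes after the uniform shift $s\mapsto s-1$ in the $w$-slots, which it does because $s\ge s(\al)$.
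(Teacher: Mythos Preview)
Your proposal is correct and follows essentially the same route as the paper: the paper first establishes exactly the difference analogue of Proposition~\ref{prop_bistri3} you describe (there stated as a separate proposition, with $z\in\{u_1,u_2\}$), via the same Duhamel/commutator decomposition in which the high frequency lands either on the $w$-factor (recursion) or on a $u_j$-factor (closed directly by \eqref{toto} with $\theta=1$), and then reruns the proof of Proposition~\ref{prop_bistri} verbatim with this input, handling the free$\times$free, $A_j$, $B_j$, $D_j$ pieces and the Christ--Kiselev lemmas just as you outline. Your identification of the loss of symmetry as the only genuine new difficulty, and of the need for the full $H^s$ bound on $u_1,u_2$ to control the high-frequency $u$-factors, matches the paper's reasoning exactly.
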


As in Section \ref{sec_boot}, the following estimate on a short time interval is the key ingredient in the proof of Proposition \ref{prop_bistri_d1}.

\begin{prop}\label{prop_sep2}
  Let $0<T<1$, $\al\in[1,2]$, $\ga=\max\{2-\al,1-\al/3\}$, $M\ge N\ge 1$, $s>1/2$, and $0<\e\ll 1$ so that $s>1/2+\e$.
  Let $I\subset \R$ be a closed interval of length $|I|\sim M^{-\ga}T$.
  Assume that $u_1,u_2\in L^\I(]0,T[;H^s(\R))$ satisfies \eqref{eq1} on $]0,T[\times \R$ and $\|u_1\|_{L_T^\I H_x^s}+\|u_2\|_{L_T^\I H_x^s}\le K$ for some $K\ge 1$.
  Then, there exists a smooth increasing function $G:\R_{\ge 0}\to \R_{\ge 0}$ such that for sufficiently small $T>0$ satisfying $G(K)T^{\frac{1}{2}}\ll 1$, it holds that
  \EQS{\label{eq_sep2}
    \begin{aligned}
      &\|P_{N}w P_{\le 2^{-5}N}z\|_{L^2(I;L^2)}\\
      &\les N^{-\frac{\al}{2}}G(K)
       \Big(T^{-\frac{1}{2}}M^{\frac{\ga}{2}}
        \|P_{N}w\|_{L^2(I;L^2)}
        +N^{1-s-\e}\|w\|_{L_T^\I H_x^{s-1}}\Big),
    \end{aligned}
  }
  where $w=u_1-u_2$ and $z=u_1$ or $u_2$.
\end{prop}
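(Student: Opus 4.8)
The plan is to mimic the proof of Proposition \ref{prop_bistri3}, but tracking carefully the asymmetry introduced by working with the difference $w=u_1-u_2$ and the "external" factor $z$, which is a full solution (either $u_1$ or $u_2$). As before, we may assume $N\ge 2^6$, we pass to the equation \eqref{eq2} for each of $u_1,u_2$ (and hence derive an equation for $w$), and we exploit that $w$ satisfies
\EQQS{
  \p_t w + (L_{\al+1}+f'(0)\p_x)w + \p_x\big(\tilde f(u_1)-\tilde f(u_2)\big)=0 ,
}
where $\tilde f(u_1)-\tilde f(u_2)$ can be written, via a Taylor expansion and Hypothesis \ref{hyp2}, as $w$ times a smooth function of $(u_1,u_2)$; schematically it is a sum of terms $w\cdot(\text{product of }u_1,u_2)$. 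We set $I=[t_1,t_2]$, pick $t_0\in I$ at which $\|P_N w(t)\|_{L_x^2}$ attains its minimum (using $w\in C([0,T];H^{(s-1)-})$, which follows from Remark \ref{rem_conti} applied to each $u_j$ and linearity), so that $\|P_N w(t_0)\|_{L_x^2}\le T^{-1/2}M^{\ga/2}\|P_N w\|_{L^2(I;L^2)}$ as in \eqref{eq3.16}. Writing the Duhamel formula for $P_N w$ and $P_{\le 2^{-5}N}z$ on $I$ around the base point $t_0$, we split into the four pieces $A_1,\dots,A_4$ exactly as before: free$\times$free, free$\times$Duhamel, Duhamel$\times$free, Duhamel$\times$Duhamel.

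The free$\times$free term $A_1$ is handled by \eqref{cor2.2}, giving $N^{-\al/2}\|P_N w(t_0)\|_{L_x^2}\|P_{\le 2^{-5}N}z(t_0)\|_{L_x^2}$ and hence the desired bound via $\|z\|_{L^\I_T H^s_x}\le K$. The terms where a Duhamel factor appears are treated with the Strichartz estimates \eqref{stri_1}--\eqref{stri_2} on the short interval $I$ together with the Christ--Kiselev lemmas (Corollaries \ref{cor_CK1}--\ref{cor_CK2}); this is where the structure of the nonlinearity must be used. For the Duhamel factor coming from $z$ (i.e.\ from $\tilde f(u_j)$, a genuine nonlinear term in $u_j$) we reuse the decompositions \eqref{decom1}--\eqref{decom2} and the estimates \eqref{eq3.17}, \eqref{eq3.27}, \eqref{eq3.21}, \eqref{eq3.22}, \eqref{eq3.26} essentially verbatim, since $z$ is a full solution with $\|z\|_{L^\I_T H^s_x}\le K$ and $s>1/2$. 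The genuinely new point is the Duhamel factor coming from $w$, namely $\int_{t_0}^t U_\al(t-t')\p_x P_N(\tilde f(u_1)-\tilde f(u_2))\,dt'$: here one expands $\tilde f(u_1)-\tilde f(u_2)=w\,g(u_1,u_2)$ with $g$ smooth, performs a Littlewood--Paley decomposition of this product as in \eqref{decom2}, and identifies the analogue of the term $V_1$, which now reads $P_{\sim N}\big(P_N w\,(P_{\le N(k)}(\text{products}))^{\cdots}\big)$ — i.e.\ it contains precisely a factor of the form $P_N w\,P_{\le N(2)}(\text{full solutions})$, to which one applies \eqref{eq_sep2} recursively (self-referentially), exactly mirroring the $V_1$-to-$\|P_N u P_{\le N(2)}u\|$ bootstrap in Proposition \ref{prop_bistri3}. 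The remaining pieces of this decomposition carry at least two high-frequency factors or a commutator, and are controlled by \eqref{toto}, \eqref{eq_comm2} and the Sobolev algebra property, with the loss of a full derivative on $w$ reflected by the factor $N^{1-s-\e}\|w\|_{L^\I_T H^{s-1}_x}$ in \eqref{eq_sep2} (compared with $N^{-s-\e}\|u\|_{L^\I_T H^s_x}$ in \eqref{eq_sep1}), because $w$ is measured in $H^{s-1}$ rather than $H^s$.

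Collecting everything, one obtains, for $2-\al-\ga\le 0$ and for $T$ small enough that $CG(K)T^{1/2}\ll 1$, an inequality of the form
\EQQS{
  \|P_N w\,P_{\le 2^{-5}N}z\|_{L^2(I;L^2)}
  &\le C N^{-\al/2}(K+T^{3/4}G(K))\Big(M^{\ga/2}T^{-1/2}\|P_N w\|_{L^2(I;L^2)}+N^{1-s-\e}G(K)\|w\|_{L^\I_T H^{s-1}_x}\Big)\\
  &\quad+ C N^{1-\al/2-\ga/2}G(K)T^{1/2}\|P_N w\,P_{\le 2^{-5}N}z\|_{L^2(I;L^2)},
}
and absorbing the last term into the left-hand side (using $N^{1-\al/2-\ga/2}\le 1$ since $\ga\ge 2-\al$) yields \eqref{eq_sep2}. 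The frequency-envelope variant follows by the same bookkeeping as in Remark \ref{rem_sep1}.

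\textbf{Main obstacle.} The delicate point is the bookkeeping of the derivative that must be placed on $w$: in the decomposition of $\p_x P_N(\tilde f(u_1)-\tilde f(u_2))$, whenever $w$ is the high-frequency factor one must be sure that either (i) the extracted term is exactly of the recursive type $P_N w\cdot P_{\le N(2)}(\text{full solutions})$, so that \eqref{eq_sep2} applies to it, or (ii) the $x$-derivative falls on a \emph{frequency-localized} factor so that no net derivative loss beyond $N^{1-s}$ occurs, or (iii) it is a commutator, handled by \eqref{eq_comm2}. Unlike in Proposition \ref{prop_bistri3}, the symmetrization is not available ($w$ and $z$ play different roles), so one cannot freely move the derivative between the two high factors; one must verify case by case — exactly as in Case (3) of Proposition \ref{prop_apri1} — that in every configuration the derivative can be routed to an admissible place. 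Making this routing rigorous, while keeping the powers of $T$, $M$ and $N$ matching those in \eqref{eq_sep2}, is the technical heart of the argument.
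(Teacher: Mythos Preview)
Your proposal is correct and follows essentially the same route as the paper: Duhamel split into four pieces on $I$, decomposition of $P_N(\tilde f(u_1)-\tilde f(u_2))$ paralleling \eqref{decom2}, identification of the recursive term $P_N w\cdot P_{\le N(2)}z$, and absorption under $G(K)T^{1/2}\ll 1$.

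Two small points worth tightening. First, since $u_1^k-u_2^k=w\sum_{j=0}^{k-1}u_1^{j}u_2^{k-1-j}$, the recursive piece emerging from the $w$-Duhamel factor involves $P_N w\cdot P_{\le N(2)}z'$ with $z'$ ranging over \emph{both} $u_1$ and $u_2$, not just the external $z$ on the left-hand side; to close, the paper sums the two inequalities (for $z=u_1$ and $z=u_2$) and then absorbs. Second, the ``main obstacle'' you describe is somewhat overstated: there is no genuine derivative-routing difficulty beyond Proposition~\ref{prop_bistri3}. The paper simply writes down the explicit analogue of \eqref{decom2} for $P_N(wz^{k-1})$, splitting according to whether $w$ or the $z$-factors sit at high frequency (their decomposition \eqref{decom3}), and estimates each piece directly; the only new case is $P_{>N(k)}w\,(P_{>N(k)}z)^j$, handled by a further dyadic split on the $w$-frequency, yielding $N^{3/2-2s}\le N^{1-s-\e}$ since $s>1/2$.
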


\begin{proof}
  The proof proceeds essentially in parallel with that of Proposition \ref{prop_bistri3}.
  However, one needs to be more careful since the equation satisfied by $w$ has less symmetry.
  It suffices to consider the case $N\ge 2^6$.
  By \eqref{eq2}, $w=u_1-u_2$ satisfies
  \EQS{\label{eq3}
    \p_t w+(L_{\al+1}+f'(0)\p_x)w
     +\p_x \de(u_1,u_2)=0,
  }
  where $\de (u_1,u_2):=\tilde{f}(u_1)-\tilde{f}(u_2)$.
  For simplicity, we also put $N(k):=N/(16k)$.
  We may set $I=[t_1,t_2]\subset\R$ with $t_1<t_2$.
  Let $t_0\in I$ be the point at which $\|P_N w\|_{L_x^2}$ attains its minimum over $I$, so that \eqref{eq3.16} holds.
  By applying the Duhamel principle to $w$ and $z$, we observe that
  \EQQS{
    &\|P_N w P_{\le N(2)} z \|_{L^2(I;L^2)}\\
    &\le \|U_\al(t-t_0) P_N w(t_0) U_\al(t-t_0) P_{\le N(2)} z(t_0) \|_{L^2(I;L^2)}\\
    &\quad+\bigg\| U_\al(t-t_0) P_N w(t_0) \bigg(\int_{t_0}^t U_\al(t-t')\p_x P_{\le N(2)} \tilde{f}(z) dt'\bigg)  \bigg\|_{L^2(I;L^2)}\\
    &\quad+\bigg\| \bigg(\int_{t_0}^t U_\al(t-t')\p_x P_N \delta(u_1,u_2) dt'\bigg)
     U_\al(t-t_0) P_{\le N(2)} z(t_0) \bigg\|_{L^2(I;L^2)}\\
    &\quad+\bigg\| \bigg(\int_{t_0}^t U_\al(t-t')\p_x P_N
    \delta(u_1,u_2) dt'\bigg)
     \int_{t_0}^t U_\al(t-t')\p_x P_{\le N(2)} \tilde{f}(z) dt'  \bigg\|_{L^2(I;L^2)}\\
    &=:A_1+A_2+A_3+A_4.
  }
  Similarly to the proof of Proposition \ref{prop_bistri3}, we see from \eqref{stri_2}, \eqref{cor2.2} and the Christ-Kiselev lemma (Corollary \ref{cor_CK1}) that
  \EQQS{
    A_1+A_2
    \les N^{-\frac{\al}{2}}
     T^{-\frac{1}{2}}M^{\frac{\ga}{2}}
     (1+G(K))\|P_N w\|_{L^2(I;L^2)}.
  }
  We only consider the estimate for $A_3$, since $A_4$ can be estimated in the same way as $A_3$, thanks to \eqref{eq3.17}.
  For that purpose, we need to take the decompositions \eqref{decom1} and \eqref{decom2} into account.
  First, note that $u_1^k-u_2^k=w(u_1^{k-1}+u_1^{k-2}u_2+\cdots +u_1u_2^{k-2}+u_2^{k-1})$.
  By a slight abuse of notation, we write either $u_1$ or $u_2$ as $z$ for simplicity, so that $u_1^k-u_2^k=kwz^{k-1}$, where $w=u_1-u_2$.
  Moreover, we decompose $P_N (wz^{k-1})$ as follows:
  \EQS{\label{decom3}
    \begin{aligned}
      P_N (wz^{k-1})
      &=P_{\sim N}(P_{N}w
       P_{\le N(2)}z (P_{\le N(k)}z)^{k-2})\\
      &\quad+P_{\sim N}(P_{N}w
       P_{[N(2),N(k)]}z (P_{\le N(k)}z)^{k-2})\\
      &\quad+P_{\sim N}
        ([P_{N}, (P_{\le N(k)}z)^{k-1}]
        P_{\sim N}w)\\
      &\quad +\sum_{j=1}^{k-1}\binom{k-1}{j}
      P_N(P_{\le N(k)}w
      (P_{> N(k)}z)^j
      (P_{\le N(k)}z)^{k-1-j})\\
      &\quad+\sum_{j=1}^{k-1}\binom{k-1}{j}
      P_N(P_{> N(k)}w
      (P_{> N(k)}z)^j
      (P_{\le N(k)}z)^{k-1-j})\\
      &=:P_{\sim N}\bigg(V_1+V_2+V_3+\sum_{j=1}^{k-1}V_{j+3}
       +\sum_{j=1}^{k-1}V_{j+k+2}\bigg).
    \end{aligned}
  }
  Recall that $P_{[N(2),N(k)]}z=P_{\le N(2)}z-P_{\le N(k)}z$.
  With this notation, we have
  \EQQS{
    A_3
    &\le \sum_{k=2}^{N(2)}\sum_{j=1}^{2k+1} \frac{|f^{(k)}(0)|}{(k-1)!}
     \bigg\| \bigg(\int_{t_0}^t U_\al(t-t')\p_xP_{\sim N}
     V_j dt'\bigg)
     U_\al(t-t_0) P_{\le N(2)} z(t_0) \bigg\|_{L^2(I;L^2)}\\
    &\quad+
     \bigg\| \bigg(\int_{t_0}^t U_\al(t-t')\p_x
      P_N \de_{>N(2)}(u_1,u_2) dt'\bigg)
      U_\al(t-t_0) P_{\le N(2)} z(t_0) \bigg\|_{L^2(I;L^2)}\\
   &=:\sum_{k=2}^{N(2)}\sum_{j=1}^{2k+1}
    \frac{|f^{(k)}(0)|}{(k-1)!}A_{3,1,j}+A_{3,2},
  }
  where $\de_{>N(2)}(u_1,u_2):= \de(u_1,u_2)- \sum_{k=2}^{N(2)} \frac{f^{(k)}(0)}{(k-1)!}wz^{k-1}$.
  We now consider $A_{3,1,j}$.
  By \eqref{toto} with $\te=0$ and the Christ-Kiselev lemma (Corollary \ref{cor_CK1}), we also obtain
  \EQS{\label{eq5.7}
    \begin{aligned}
      A_{3,1,1}
      &\les N^{1-\frac{\al}{2}}
      \|P_{N}w(P_{\le N(k)}z)^{k-1}\|_{L^1(I;L^2)}
      \|P_{\le N(k)}z\|_{L_T^\I L_x^2}\\
      &\les T^{\frac{1}{2}} K^{k-1}N^{1-\frac{\al}{2}}M^{-\frac{\ga}{2}}
       \|P_{N}wP_{\le N(k)}z\|_{L^2(I;L^2)}\\
      &\les  T^{\frac{1}{2}} K^{k-1}
       \|P_{N}wP_{\le N(k)}z\|_{L^2(I;L^2)},
    \end{aligned}
  }
  since $M\ge N$.
  Similarly, as in \eqref{eq3.27}, \eqref{toto} with $\te=1$ and the Christ-Kiselev lemma (Corollary \ref{cor_CK1}) show that
  \EQS{\label{eq5.9}
    \begin{aligned}
      A_{3,1,2}
      &\les N^{-\frac{\al}{2}}
      \|P_{N}w
       P_{[N(2),N(k)]}z (P_{\le N(k)}z)^{k-2}\|_{L^{\frac{4}{3}}(I;L^1)}
      \|P_{\le N(2)}z\|_{L_T^\I L_x^2}\\
      &\les T^{\frac{3}{4}} (k-1)K^k N^{-\frac{\al}{2}} N^{1-s-\e}
       \|w\|_{L_T^\I H_x^{s-1}},
    \end{aligned}
  }
  since $3-\al-3\ga\le 0$.
  In the same way, $A_{3,1,3}$ can be bounded by the right-hand side of \eqref{eq5.9}.
  For $A_{3,1,j}$ with $j=4,\dots,k+2$, by \eqref{toto} with $\te=1$ and the Christ-Kiselev lemma (Corollary \ref{cor_CK1}), we obtain
  \EQS{\label{eq5.5}
    \begin{aligned}
      \sum_{j=1}^{k-1}A_{3,1,j+3}
      &\les \sum_{j=1}^{k-1}
       N^{-\frac{\al}{2}}N^{1-\frac{\al-1}{4}}
       \|V_{j+3}\|_{L^{\frac{4}{3}}(I;L^1)}
       \|P_{\le N(k)}z\|_{L_T^\I L_x^2}\\
      &\les 2^k K^{k-1} N^{-\frac{\al}{2}}N^{\frac{5-\al-3\ga}{4}}
       T^{\frac{3}{4}}
       \|P_{\le N(k)}w\|_{L_T^\I L_x^2}
       \|P_{> N(k)}z\|_{L_T^\I L_x^2}\\
      &\les T^{\frac{3}{4}} k^2 (2K)^k N^{-\frac{\al}{2}}
       N^{1-s-\e}
       \|w\|_{L_T^\I H_x^{s-1}}.
    \end{aligned}
  }
  Here, we used $3-\al-3\ga\le 0$ and $N(k)^{-s}\le 2^{10} k^2 N^{-s}$.
  Next, we consider $A_{3,2,j}$.
  First, we observe that
  \EQQS{
    &\|P_N(P_{> N(k)}w (P_{> N(k)}z)^j
     (P_{\le N(k)}z)^{k-1-j})\|_{L^{\frac{4}{3}}(I;L^1)}\\
    &\les T^{\frac{3}{4}}M^{-\frac{3}{4}\ga}
     \sum_{N_1> N(k)}\|P_N(P_{N_1}w (P_{> N(k)}z)^j
      (P_{\le N(k)}z)^{k-1-j})\|_{L_T^\I L_x^1}.
  }
  We divide this summation into two cases: $N_1\ge 2^3 N$ and $N(k)<N_1\le 2^2 N$.
  For the first case, by the impossible frequency interaction, we have
  \EQQS{
    &\sum_{N_1\ge 2^3 N}\|P_N(P_{N_1}w (P_{> N(k)}z)^j
     (P_{\le N(k)}z)^{k-1-j})\|_{L_T^\I L_x^1}\\
    &\les K^{k-1-j} \sum_{N_1\ge 2^3 N} \|P_{N_1} w\|_{L_T^\I L_x^2}
     \|P_{\sim N_1}((P_{> N(k)}z)^j)\|_{L_T^\I L_x^2}\\
    &\les K^{k-1-j} \|w\|_{L_T^\I H_x^{s-1}}
     \|(P_{> N(k)}z)^j\|_{L_T^\I H_x^s} \sum_{N_1> 2^3 N}N_1^{1-2s}
    \les K^{k-1} N^{1-2s} \|w\|_{L_T^\I H_x^{s-1}}.
  }
  On the other hand, for the second case, it holds that
  \EQQS{
    &\sum_{N(k)<N_1\le 2^2 N}\|P_N(P_{N_1}w (P_{> N(k)}z)^j
     (P_{\le N(k)}z)^{k-1-j})\|_{L_T^\I L_x^1}\\
    &\les K^{k-1} N(k)^{-s}
     \sum_{N(k)<N_1\le 2^2 N} \|P_{N_1}w\|_{L_T^\I L_x^2}
    \les K^{k-1}k^3N^{1-2s}\|w\|_{L_T^\I H_x^{s-1}}.
  }
  Therefore, \eqref{toto} with $\te=1$ and the Christ-Kiselev lemma (Corollary \ref{cor_CK1}) give
  \EQS{\label{eq5.10}
    \begin{aligned}
      \sum_{j=1}^{k-1}A_{3,1,j+k+2}
      &\les \sum_{j=1}^{k-1}
       N^{-\frac{\al}{2}}N^{1-\frac{\al-1}{4}}
       \|V_{j+3}\|_{L^{\frac{4}{3}}(I;L^1)}
       \|P_{\le N(2)}z\|_{L_x^2}\\
      &\les T^{\frac{3}{4}} k^3 (2K)^k
       N^{-\frac{\al}{2}} N^{\frac{3}{2}-2s}
       \|w\|_{L_T^\I H_x^{s-1}}.
    \end{aligned}
  }
  It is worth mentioning that $3/2-2s<1-s$ since $s>1/2$.
  Finally, we consider $A_{3,2}$.
  By the product estimate \eqref{eq2.3}, we have
  \EQS{\label{eq5.11}
    \begin{aligned}
      \|P_N \de_{> N(2)}(u_1,u_2)\|_{L_x^2}
      &\les N^{1-s}\sum_{l=N(2)+1}^\I
       \frac{|f^{(l)}(0)|}{(l-1)!} \|wz^{l-1}\|_{H_x^{s-1}}\\
      &\les N^{1-s}N(2)^{-1}G(K)\|w\|_{H_x^{s-1}}
      \les N^{-s}G(K)\|w\|_{H_x^{s-1}},
    \end{aligned}
  }
  which together with \eqref{toto} and the Christ-Kiselev lemma (Corollary \ref{cor_CK1}) imply that
  \EQS{\label{eq5.6}
    \begin{aligned}
      A_{3,2}
      &\les N^{1-\frac{\al}{2}}
       \|P_N \de_{> N(2)}(u_1,u_2)\|_{L^1(I;L^2)}
       \|P_{\le N(2)}z\|_{L_T^\I L_x^2}\\
      &\les T N^{-\frac{\al}{2}}N^{1-s-\ga}KG(K)
       \|w\|_{H_x^{s-1}}.
    \end{aligned}
  }
  In conclusion, we obtain
  \EQQS{
    A_3
    \les T^{\frac{1}{2}}G(K)\|P_{N}wP_{\le N(k)}z\|_{L^2(I;L^2)}
     +T^{\frac{3}{4}} G(K) N^{-\frac{\al}{2}} N^{1-s-\e}
      \|w\|_{L_T^\I H_x^{s-1}}
  }
  since $2-\al-\ga\le 0$.
  This completes the estimates for $A_3$.
  The estimates for $A_4$ follows from the above argument with \eqref{eq3.17} and the Christ-Kiselev lemma (Corollary \ref{cor_CK2}) instead of Corollary \ref{cor_CK1}.
  Collecting the above estimats, we obtain
  \EQQS{
    \|P_{N}wP_{\le N(2)}z\|_{L^2(I;L^2)}
    &\les N^{-\frac{\al}{2}}G(K)
     \Big(T^{-\frac{1}{2}}M^{\frac{\ga}{2}}
      \|P_{N}w\|_{L^2(I;L^2)}
      +N^{1-s-\e}\|w\|_{L_T^\I H_x^{s-1}}\Big)\\
    &\quad+T^{\frac{1}{2}}G(K)\|P_{N}wP_{\le N(2)}z\|_{L^2(I;L^2)}.
  }
  Recalling that $z$ denotes either $u_1$ or $u_2$, this leads to
  \EQQS{
    &\|P_{N}wP_{\le N(2)}u_1\|_{L^2(I;L^2)}
    +\|P_{N}wP_{\le N(2)}u_2\|_{L^2(I;L^2)}\\
    &\les N^{-\frac{\al}{2}}G(K)
     \Big(T^{-\frac{1}{2}}M^{\frac{\ga}{2}}
      \|P_{N}w\|_{L^2(I;L^2)}
      +N^{1-s-\e}\|w\|_{L_T^\I H_x^{s-1}}\Big)\\
    &\quad+T^{\frac{1}{2}}G(K)\Big(\|P_{N}wP_{\le N(2)}u_1\|_{L^2(I;L^2)}
     +\|P_{N}wP_{\le N(2)}u_2\|_{L^2(I;L^2)}\Big).
  }
  By choosing $T>0$ sufficiently small so that $T^{\frac{1}{2}}G(K)\ll1$, we obtain \eqref{eq_sep2}, which completes the proof.
\end{proof}

Now we are ready to show Proposition \ref{prop_bistri_d1}.

\begin{proof}[Proof of Proposition \ref{prop_bistri_d1}]
  The proof proceeds essentially in parallel with the argument of Proposition \ref{prop_bistri}.
  First, by an argument similar to that of \eqref{eq3.28}, it suffices to consider the case $N_1\gg 2^6$.
  The main difference is that we use \eqref{eq_sep2} instead of \eqref{eq_sep1}.
  We divide the time interval $[0,T]$ into closed subintervals of length $\sim N_1^{-\ga} T $; that is, we define $\{I_{j,N_1}\}_{j\in J_{N_1}}$ with $  \# J_{N_1}\sim N_1^\ga $ such that $\bigcup_{j\in J_{N_1}}I_{j,N_1}=[0,T]$ and $|I_{j,N_1}|\sim N_1^{-\ga} T$.
  Without loss of generality, we may set $I_{j,N_1}=[t_{j,N_1},t_{j+1,N_1}]$ for $j\in J_{N_1}$.
  By continuity, there exists $\tilde{t}_{j,N_1}\in I_{j,N_1}$ at which $\|P_{N_1}w(t)\|_{L_x^2}$ attains its minimum on $I_{j,N_1}$.
  For simplicity, we write $t_j=t_{j,N_1}$, $\tilde{t}_j=\tilde{t}_{j,N_1}$ and $N_m(k):=N_m/(16k)$ for $m=1,2$ and $k\in \N$.
  On each $ I_{j,N_1} $, by the Duhamel principle for \eqref{eq3}, it holds that for $m=1,2$
  \EQQS{
    P_{N_m}w(t)
    &=U_\al(t-\tilde{t}_j)P_{N_m} w(\tilde{t}_j)
     +\int_{\tilde{t}_j}^t U_\al (t-\ta)\p_x P_{N_m} \de(u_1,u_2)d\ta\\
    &=:U_\al(t-\tilde{t}_j)P_{N_m} w(\tilde{t}_j)
     +F_{m,j},
  }
  where $\de(u_1,u_2)=\tilde{f}(u_1)-\tilde{f}(u_2)$.
  Therefore
  \EQQS{
    &\|\La_a (P_{N_1} w, P_{N_2} w)\|_{L_{T,x}^2}^2\\
    &\le \sum_{j\in J_{N_1}}
     \|\La_a(U_\al(t-\tilde{t}_j)P_{N_1}w(\tilde{t}_j),
      U_\al(t-\tilde{t}_j)P_{N_2}w(\tilde{t}_j))\|_{L^2(I_{j,N_1};L^2)}^2\\
    &\quad+\sum_{j\in J_{N_1}}A_{j}^2
     +\sum_{j\in J_{N_1}} B_{j}^2
     +\sum_{j\in J_{N_1}} D_{j}^2,
  }
  where
  \EQQS{
    A_{j}&=\|\La_a(F_{1,j},
     U_\al(t-\tilde{t}_j)P_{N_2}w(\tilde{t}_j))\|_{L^2(I_{j,N_1};L^2)},\\
    B_{j}&=\|\La_a(U_\al(t-\tilde{t}_j)P_{N_1}w(\tilde{t}_j),
     F_{2,j})\|_{L^2(I_{j,N_1};L^2)},\\
    D_{j}&=\|\La_a(F_{1,j},
     F_{2,j})\|_{L^2(I_{j,N_1};L^2)}.
  }
  In the same way as in \eqref{eq3.23}, we obtain
  \EQQS{
    &\sum_{j\in J_{N_1}}
     \|\La_a(U_\al(t-\tilde{t}_j)P_{N_1}w(\tilde{t}_j),
      U_\al(t-\tilde{t}_j)P_{N_2}w(\tilde{t}_j))\|_{L^2(I_{j,N_1};L^2)}^2\\
    &\les T^{-\frac 12}N_1^{\frac{\ga+1-\al}{2}}
    \|P_{N_1} w\|_{L^2_{T,x}}^2\|P_{N_2} w\|_{L_T^\I L^2_x}^2.
  }
  Now we focus on the estimates for $A_{j}, B_{j}$ and $D_{j}$.
  We begin with $A_{j}$.
  For that purpose, we take the decomposition \eqref{decom3} into account:
  \EQQS{
    A_j
    &\le \sum_{k=2}^{N_1(2)}\sum_{l=1}^{2k+1} \frac{|f^{(k)}(0)|}{(k-1)!}
     \bigg\| \bigg(\int_{\tilde{t}_j}^t U_\al(t-t')\p_xP_{\sim N_1}
     V_l dt'\bigg)
     U_\al(t-\tilde{t}_j) P_{N_2} w(\tilde{t}_j) \bigg\|_{L^2(I_{j,N_1};L^2)}\\
    &\quad+
     \bigg\| \bigg(\int_{\tilde{t}_j}^t U_\al(t-t')\p_x
      P_{N_1} \de_{>N_1(2)}(u_1,u_2) dt'\bigg)
      U_\al(t-\tilde{t}_j) P_{N_2} w(\tilde{t}_j) \bigg\|_{L^2(I_{j,N_1};L^2)}\\
   &=:\sum_{k=2}^{N(2)}\sum_{l=1}^{2k+1}
    \frac{|f^{(k)}(0)|}{(k-1)!}A_{j,1,l}+A_{j,2},
  }
  where $V_l$ for $l=1,\dots,2k+1$ is defined in \eqref{decom3} with $N=N_1$, and $\de_{>N_1(2)}(u_1,u_2):= \de(u_1,u_2)- \sum_{k=2}^{N_1(2)} \frac{f^{(k)}(0)}{(k-1)!}wz^{k-1}$.
  Here, we denote either $u_1$ or $u_2$ by $z$.
  By a slight abuse of notation, we write $u_1^k-u_2^k=kwz^{k-1}$.
  As for $A_{j,1,1}$, \eqref{eq3.24} with $\te=0$ shows that
  for sufficiently small $T>0$ satisfying $G(K)T^{\frac{1}{2}}\ll 1$,
  \EQQS{
    A_{j,1,1}
    &\les T^{\frac{1}{4}}N_1^{1-\frac{\al+\ga-1}{4}}
     \|P_{N_1}wP_{\le N_1(2)}z
      (P_{\le N_1(k)}z)^{k-2}\|_{L^1(I_{j,N_1};L^2)}
     \|P_{N_2}w(\tilde{t}_j)\|_{L_x^2}\\
    &\les T^{\frac{3}{4}}N_1^{-\frac{\al+\ga-1}{4}}
     K^{k-2}G(K)
     (T^{-\frac{1}{2}}N_1^{\frac{\ga}{2}}
     \|P_{N_1}w\|_{L^2(I_{j,N_1};L^2)}
     +N_1^{1-s-\e}\|w\|_{L_T^\I H_x^{s-1}})\\
    &\quad\times\|P_{N_2}w\|_{L_T^\I L_x^2},
  }
  since $2-\al-\ga\le 0$.
  In order to estimate $A_{j,1,l}$ with $l\ge 2$ and $A_{j,2}$, we follow the arguments in \eqref{eq5.9}, \eqref{eq5.5}, \eqref{eq5.10} and \eqref{eq5.6}, using \eqref{cor1} instead of \eqref{cor2.2}.
  As a result, all the estimates hold with $N^{-\frac{\al}{2}}$ replaced by $T^{\frac{1}{4}}N_1^{-\frac{\al+\ga-1}{4}}$.
  Therefore, we obtain
  \EQQS{
    A_j
    &\les \sum_{k=2}^{N_1(2)}\frac{|f^{(k)}(0)|}{(k-1)!}
     T^{\frac{3}{4}} N_1^{-\frac{\al+\ga-1}{4}}K^{k-1}G(K)
     \|P_{N_2}w\|_{L_T^\I L_x^2}\\
    &\quad\quad\quad \times(N_1^{\frac{\ga}{2}}
     \|P_{N_1}w\|_{L^2(I_{j,N_1};L^2)}
     +N_1^{1-s-\e}\|w\|_{L_T^\I H_x^{s-1}})\\
    &\quad+T^{\frac{5}{4}}N_1^{-\frac{\al+\ga-1}{4}}N_1^{1-s-\ga}
     \|w\|_{L_T^\I H_x^{s-1}}\|P_{N_2}w\|_{L_T^\I L_x^2}\\
    &\les T^{\frac{1}{4}}N_1^{\frac{\ga+1-\al}{4}}G(K)
     (\|P_{N_1}w\|_{L^2(I_{j,N_1};L^2)}
     +N_1^{1-s-\frac{\ga}{2}-\e}\|w\|_{L_T^\I H_x^{s-1}})
     \|P_{N_2}w\|_{L_T^\I L_x^2},
  }
  which implies that
  \EQQS{
    \sum_{j\in J_{N_1}} A_j^2
    \les T^{\frac{1}{2}}N_1^{\frac{\ga+1-\al}{2}} G(K)^2
    (\|P_{N_1}w\|_{L_{T,x}^2}+N_1^{1-s-\e}\|w\|_{L_T^\I H_x^{s-1}})^2
    \|P_{N_2}w\|_{L_T^\I L_x^2}^2.
  }
  Next, we consider the contributions of $B_j$ and $D_j$.
  As discussed in \eqref{eq3.29}, it suffices to treat the case $N_2\gg 2^6$.
  For $B_j$, we take the decomposition \eqref{decom3} into account and apply the same argument as for $A_j$, using $N_1\ge N_2$.
  It is worth mentioning that we do not need to retain the $L^2$-norm in time for $P_{N_2}w$ when considering the term corresponding to $V_1$ in \eqref{decom3}.
  Specifically, we apply \eqref{eq_sep2} in the following form:
  \EQQS{
    \|P_{N_2}w P_{\le N_2(2)}z\|_{L^2(I_{j,N_1};L^2)}
    \les N_2^{-\frac{\al}{2}+1-s}G(K) \|w\|_{L_T^\I H_x^{s-1}}.
  }
  The remaining terms can be estimated as in the case of $A_j$, applying \eqref{eq3.16} (in fact, these terms have slightly better regularity).
  Moreover, we can similarly estimate $D_j$, using Corollary \ref{cor_CK2} as a replacement for Corollary \ref{cor_CK1}.
  Combining all the estimates, we obtain \eqref{eq_bistri_d1}, which completes the proof.
\end{proof}

\begin{prop}\label{prop_diff1}
  Let $0<T<1$, $\al\in[1,2]$ and $s(\al)\le s\le 2$ with $s>1/2$.
  Let $u_1,u_2\in L_T^\I H_x^s$ be solutions to \eqref{eq1} on $[0,T]$  emanating respectively from initial data $u_{0,1},u_{0,2}\in H^s (\R)$, and satisfying $\|u_1\|_{L_T^\I H_x^{s}}+\|u_2\|_{L_T^\I H_x^{s}}\le K$ for some $K\ge 1$.
  Then, there exist a smooth increasing function $G:\R_{\ge 0}\to \R_{\ge 0}$ and constants $C>1$ and $\te>0$ such that for sufficiently small $T>0$ satisfying $G(K)T^{\frac{1}{2}}\ll 1$, we have
  \EQQS{
    \|w\|_{L_T^\I H_x^{s-1}}^2
    \le \|u_{0,1}-u_{0,2}\|_{H_x^{s-1}}^2
     +CT^\te \|w\|_{Z_{T}^{s-1}}\|w\|_{L_T^\I H_x^{s-1}},
  }
  where $w=u_1-u_2$.
\end{prop}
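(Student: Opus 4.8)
The plan is to run the energy method used for Proposition \ref{prop_apri1} on the difference equation \eqref{eq3}
\EQQS{
  \p_t w+(L_{\al+1}+f'(0)\p_x)w+\p_x\de(u_1,u_2)=0,\qquad
  \de(u_1,u_2)=\tilde f(u_1)-\tilde f(u_2),
}
now at the regularity level $H_x^{s-1}$, replacing the refined bilinear estimates by their difference counterparts, Propositions \ref{prop_bistri_d1} and \ref{prop_sep2}. First I would multiply \eqref{eq3} by $P_N^2w$ and integrate in $x$ (the skew-adjoint part drops), then integrate in time, multiply by $(1\vee N)^{2(s-1)}$ and sum over $N$. Expanding $\tilde f$ by Hypothesis \ref{hyp2} and using $u_1^k-u_2^k=w\sum_{i=0}^{k-1}u_1^iu_2^{k-1-i}$, the nonlinear contribution becomes a sum over $k\ge2$ of terms $\iint_{[0,t]\times\R}\p_x P_N^2\big(w\,z_1\cdots z_{k-1}\big)\,w\,dx\,dt'$ with each $z_j\in\{u_1,u_2\}$; every such term carries exactly two copies of $w$ (the outer factor and one inner factor). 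The part $N\les1$ is disposed of directly by the product estimate \eqref{eq2.3ana} in $H^{s-1}$, with the bound $\les TG(K)\|w\|_{L_T^\I H_x^{s-1}}^2$.

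For $N\gg1$ I would symmetrize the two Fourier variables carried by the two copies of $w$, which, as in the proof of Proposition \ref{prop_apri1}, produces a $\Pi(P_{N_1}w,P_{N_2}w)$-type term — with the commutator/double-commutator splitting \eqref{defai}--\eqref{def_a1} of the symbol $\phi_N^2(\xi_1)\xi_1+\phi_N^2(\xi_2)\xi_2$ when both copies of $w$ sit at frequency $\sim N$ — plus residual terms in which the derivative falls on a $z$-factor. I would then run the same case-by-case analysis as in Proposition \ref{prop_apri1}: order the $k+1$ frequencies, note the largest is $\sim N$, and split into a nonresonant region — handled by the same Bourgain-type ($X^{s-2,1}$) estimates as in \cite{MT3}, which is where the $Z_T^{s-1}$-norm on the right-hand side originates — and the resonant regions. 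In the resonant regions I would estimate every non-separated pair of two copies of $w$ by Proposition \ref{prop_bistri_d1}, every pair of two $z$-factors by Proposition \ref{prop_bistri}, and every separated pair involving $w$ by \eqref{eq_bistri2} in its general-solution form (with $f_1=\de(u_1,u_2)+f'(0)w$ and $f_2=\tilde f(z)+f'(0)z$), the short-interval bound \eqref{eq_sep2} playing the role that \eqref{eq_sep1} played for Proposition \ref{prop_bistri}; one always keeps the two copies of $w$ inside $L^2_{t,x}$-type norms at regularity $s-1$ (the $z$-factors being taken at $H_x^s$, bounded by $K$), since $H_x^{s-1}$ need not embed in $L_x^\I$. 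Summing the resulting dyadic series — convergent for the same reason as in Proposition \ref{prop_apri1}, thanks to $s>1/2$ and $2s\ge1+\frac{\ga+1-\al}{2}$ — and renaming the implicit constants as a smooth increasing function $G$ yields the asserted inequality for $T$ small enough that $G(K)T^{1/2}\ll1$.

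The step I expect to be the main obstacle is precisely the loss of symmetry of \eqref{eq3}: in Proposition \ref{prop_apri1} all $k+1$ factors were equal and one could freely assume any ordering, whereas here only two of them are $w$, so one must track whether each of the two top frequencies is carried by $w$ or by a $z$. The genuinely new configuration is the one in which only one copy of $w$ sits at the top frequency $\sim N$, the other top frequency being carried by a $z$, while the second copy of $w$ sits at a much lower frequency: no symmetrization is then possible, and this ``mixed'' high--high interaction of $w$ with $z$ is not covered by Proposition \ref{prop_bistri_d1} as stated. I would deal with it either by checking that such a configuration carries a favorable modulation of size $\gts N^{\al}$ — as soon as the low-frequency factors do not sum to something too small, in which case it is absorbed by the Bourgain-type bound — or, in the remaining resonant case, by observing that the proof of Proposition \ref{prop_bistri_d1} (through Proposition \ref{prop_sep2}) only uses that each of the two factors solves an equation of the form $\p_t v+L v+\p_x(\cdot)=0$ with nonlinearity controlled in the appropriate Sobolev space — $\de(u_1,u_2)$ in $H^{s-1}$ for $w$, $\tilde f(z)$ in $H^s$ for $z$ — so that the same argument produces the required mixed non-separated bilinear estimate $\|\La_a(P_{N_1}w,P_{N_2}z)\|_{L^2_{T,x}}$, after which the integration by parts moving the derivative onto the $z$-factor is absorbed by its half-derivative gain. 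Once this point is settled, all remaining details parallel the proof of Proposition 4.6 in \cite{MT3}, with the bilinear estimates there replaced by \eqref{eq_bistri_d1}, \eqref{eq_sep2} and \eqref{eq_bistri2}.
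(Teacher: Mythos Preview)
Your overall strategy is correct and matches the paper's: energy method on \eqref{eq3} at level $H^{s-1}$, symmetrize in the two copies of $w$ to produce $\int\Pi(P_{N_1}w,P_{N_2}w)\prod_{j\ge3}P_{N_j}z\,dx$ (this symmetrization is exact---there are no residual terms with the derivative on a $z$-factor), then a case analysis. The paper orders the two $w$-frequencies $N_1\ge N_2$ and the top two $z$-frequencies $N_3\ge N_4$ separately and splits into: Case~1, $N_4\gtrsim N_1/k$ (handled by Propositions~\ref{prop_bistri_d1} and~\ref{prop_bistri} on the pairs $(w,w)$ and $(z,z)$); Case~2, $N_1\gg kN_4$, $N_2\gtrsim N_3$ (forcing $N_1\sim N_2$, then the $a_1/a_2$ splitting \eqref{defai} with Proposition~\ref{prop_bistri2} on separated $(w,z)$ pairs for $a_1$ and Propositions~\ref{prop_bistri_d1},~\ref{prop_bistri} for $a_2$); Case~3, $N_1\gg kN_4$, $N_2\ll N_3$.

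Your identified ``obstacle''---a mixed non-separated pair $(P_{N_1}w,P_{N}z)$ with comparable frequencies---is not a real obstacle, and your proposed workaround (proving a new mixed non-separated bilinear estimate) is unnecessary. That configuration is exactly Case~3, where one checks $N_1\sim N_3\sim N\gg N_2,N_4$; since $N_1\gg N_2$ the $\Pi$-structure gives nothing, but every pairing one needs---e.g.\ $(P_{N_1}w,P_{N_4}z)$ and $(P_{N_3}z,P_{N_2}w)$---is high--low and hence covered by Proposition~\ref{prop_bistri2} in its general-solution form (with $f_1=\de(u_1,u_2)$, $f_2=\tilde f(z)$), after subdividing into $kN_4\ll N_2$ (nonresonant, Bourgain-type) and $kN_4\gtrsim N_2$. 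So only separated bilinear estimates are used here, and Proposition~\ref{prop_sep2} enters solely through the proof of Proposition~\ref{prop_bistri_d1}, not independently.
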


\begin{proof}
  The proof relies on the argument of Proposition 5 in \cite{MT3}.
  For simplicity, we denote $u_1$ or $u_2$ by $z$, so that $u_1^k-u_2^k=kwz^{k-1}$.
  By using the equation \eqref{eq3}, a symmetrization argument, and the product estimate \eqref{eq2.3ana} for low frequencies $N\les 1$ (see \eqref{eq5.11} for instance), it follows that for $t\in[0,T]$
  \EQQS{
    \|w(t)\|_{H_x^{s-1}}^2
    \le \|u_{0,1}-u_{0,2}\|_{H_x^{s-1}}^2
     + CT\|w\|_{L_T^\I H_x^{s-1}}^2
     +\sum_{k=2}^\I \frac{|f^{(k)}(0)|}{(k-1)!}|I_{t,k}|,
  }
  where $I_{t,k}$ is defined by
  \EQQS{
    I_{t,k}
    :=C(k)\sum_{N\gg 1} \sum_{\substack{N_1\ge N_2,\\ N_3\ge N_4\ge \max_{j\ge 5} N_j}} N^{2(s-1)}
    \int_0^t \int_\R
    \Pi (P_{N_1}w,P_{N_2}w)
    \prod_{j=3}^{k+1} P_{N_j} z dxdt',
  }
  with $|C(k)|\les k^2$, which stems from the frequency ordering assumptions.
  More precisely, we assume $N_3\ge N_4\ge N_5=\max_{j\ge 5} N_j$ when $k\ge 4$, whereas we assume $N_3\ge N_4$ when $k=3$.
  Also, $\Pi(\cdot,\cdot)$ is defined in \eqref{def_pi}.
  Our goal is to show that there exists $\te>0$ such that
  \EQQS{
    \sup_{t\in [0,T]}|I_{t,k}|
    \le (CK)^{k-1}T^\te \|w\|_{Z_{T}^{s-1}}\|w\|_{L_T^\I H_x^{s-1}}.
  }
  Now we consider the following contribution on $I_{t,k}$:
  \begin{itemize}
    \item $N_4\gtrsim N_1/k$ (when $k\ge 3$),
    \item $N_1\gg kN_4$ and $N_2\gtrsim N_3$ (or $N_2\gtrsim N_3$ when $k=2$),
    \item $N_1\gg kN_4$ and $N_2\ll N_3$ (or $N_2\ll N_3$ when $k=2$).
  \end{itemize}

  \noindent
  \textbf{Case 1:} $N_4\gtrsim N_1/k$.
  Notice that $kN_3\ge kN_4\gtrsim N_1\ge N_2$.
  We have
  \EQQS{
    |I_{t,k}|
    &\les K^{k-3}
      \sum_{\substack{N_2\le N_1\les kN_4\le kN_3 ,\\N\gg 1}}
      N^{2(s-1)}
      \|\Pi (P_{N_1}w,P_{N_2}w)\|_{L_{T,x}^2}
      \|P_{N_3}zP_{N_4}z\|_{L_{T,x}^2}.
  }
  It suffices to consider the contribution of $\p_x P_N^2 P_{N_1}w P_{N_2}w$ in $\Pi (P_{N_1}w,P_{N_2}w)$, as $N_1\ge N_2$.
  For simplicity, we only treat the case $s< 1$.
  The case $s\ge 1$ can be handled easily, since there is more room to place derivatives to the functions.
  When $N_2\ge 1$, Propositions \ref{prop_bistri} and \ref{prop_bistri_d1}, and the Young inequality show that for sufficiently small $T>0$ satisfying $G(K)T^{\frac{1}{2}}\ll 1$
  \EQQS{
    &\sum_{1\le N_2\le N_1\les kN_4\le kN_3}
     N_1^{2(s-1)}
     \|\p_x P_N^2 P_{N_1}w P_{N_2}w\|_{L_{T,x}^2}
     \|P_{N_3}z P_{N_4}z \|_{L_{T,x}^2}\\
    &\les T^{\frac{1}{2}}K^{k-2}G(K)^2\|w\|_{L_T^\I H_x^{s-1}}
     \sum_{\substack{1\le N_2\le N_1,\\N_1\les kN_4\le kN_3}}
     N_1^{2s-1+\frac{\ga+1-\al}{4}} N_2^{1-s}
     N_3^{\frac{\ga+1-\al}{4}} N_4^{-s}
     W_{N_1} V_{N_3}\\
    &\les k^2T^{\frac{1}{2}}K^{k-2}G(K)^2\|w\|_{L_T^\I H_x^{s-1}}
     \sum_{N_1\les kN_3}N_1^{s_0-1}N_3^{s_0}
     \bigg(\frac{N_1}{kN_3}\bigg)^{\frac{1}{2}}
     W_{N_1} V_{N_3}\\
    &\les k^2T^{\frac{1}{2}}K^{k-1}G(K)^2\|w\|_{L_T^\I H_x^{s-1}}^2,
  }
  where, with a slight abuse of notation, $W_{N_1}$ and $V_{N_3}$ are defined by
  \EQQS{
    W_{N_1}
    &:=T^{-\frac{1}{2}}\|P_{N_1}w\|_{L_{T,x}^2}
    +N_1^{1-s-\e}\|w\|_{L_T^\I H_x^{s-1}},\\
    V_{N_3}
    &:=T^{-\frac{1}{2}}\|P_{N_3}z\|_{L_{T,x}^2}
    +N_3^{-s-\e}\|z\|_{L_T^\I H_x^s}.
  }
  Here, we recall that $s_0$ satisfies $s\ge s_0\ge s(\al)\vee(1/2+\e)$, where $\e>0$ is sufficiently small (for example, $\e=s_0/4-1/8$).
  On the other hand, when $N_2=0$, we directly estimate $\|\p_x P_N^2 P_{N_1}w P_{0}w\|_{L_{T,x}^2}$ by applying the H\"older inequality and the Bernstein inequality.

  \noindent
  \textbf{Case 2:} $N_1\gg kN_4$ and $N_2\gtrsim N_3$ (or $N_2\gtrsim N_3$ when $k=2$).
  It suffices to consider the case $k\ge 3$, since for $k=2$, the nonlinear interactions are always nonresonant, which implies that we can apply the Bourgain-type estimates.
  See also \cite{MV15}.
  Notice that we have $N_1\sim N_2\gts N_3\ge N_4$.
  When $N_3\gg kN_4$ and $N_3\gg 1$, we apply Bourgain-type estimates as in \cite{MT22}.
  On the other hand, when $N_3\gg kN_4$ but $N_3\les 1$, we can estimate directly, following \eqref{eq4.2}.
  So, we may assume $N_3\les kN_4$.
  When $N_2\sim N_3$, then $N_1\sim N_2\sim N_3\ge N_4$ and $N_3\les kN_4$ corresponds to a configuration already treated in the Case 1.
  Thus, we may further assume that $N_2\gg N_3$.
  As in the Case 3 of the proof of Proposition \ref{prop_apri1}, by the Taylor theorem, we decompose
  \EQQS{
    &\int_\R\Pi(P_{N_1}w,P_{N_2}w)Fdx\\
    &=-\int_\R P_{N_1}w(\check{a}_1*P_{N_2}w)\p_x Fdx
    +\frac{1}{N}\int_\R \La_{a_2}(P_{N_1}w,P_{N_2}w)\p_x^2 Fdx,
  }
  where $a_1$ and $a_2$ are defined in \eqref{def_a1}, and $F=\prod_{j=3}^{k+1}P_{N_{j}}z$.
  We then apply the refined bilinear Strichartz estimates; that is, we use Proposition \ref{prop_bistri2} for the first term and Proposition \ref{prop_bistri_d1} for the second term, assuming sufficiently small $T>0$.
  To be precise, we apply bilinear estimates in the case $N_3\gg k$ and $N_4\ge 1$, whereas the case $N_3\les k$ is treated directly.
  Recall that the case $N_3\gg k$ and $N_4=0$ cannot happen since $N_3\les kN_4$.
  See also Case 2 in the proof of Proposition 6 in \cite{MT3}.

  \noindent
  \textbf{Case 3:} $N_1\gg kN_4$ and $N_2\ll N_3$ (or $N_2\ll N_3$ when $k=2$).
  As in Case 2, we consider only the case $k\ge 3$.
  To close the estimate, we consider two subcases.
  One case is when $kN_4\ll N_2$, and the other is when $kN_4\gts N_2$.
  In the first subcase, we can apply Bourgain's type estimates when $N_2\gg 1$, whereas \eqref{eq4.2} directly yields the desired bound when $N_2\les 1$.
  These estimates are valid for $s>1/2$, regardless of the value of $\al$.
  For details, see Subcase 3.1 in the proof of Proposition 5.1 in \cite{MT22}.
  In the second subcase, we use the refined bilinear estimate (Proposition \ref{prop_bistri2}), which is also valid for $s>1/2$, independently of the value of $\al$; see Case 3 in the proof of Proposition 6 in \cite{MT3}.
  This completes the proof.
\end{proof}

\subsection{Proof of Theorem \ref{theo1}}
\label{subsec_proof}

\begin{proof}
  The proof follows from  standard arguments of the energy method with Propositions \ref{prop_apri1} and \ref{prop_diff1} in hands.
  For details, see Section 6 of \cite{MT22}.
\end{proof}

\section*{Acknowlegdements}

The second author was supported by JSPS KAKENHI Grant Numbers JP23K19019, JP25K17287.
A part of this work was conducted during a visit of the second author at Institut Denis Poisson (IDP) of Universit\'e de Tours in France. The second author is deeply grateful to IDP for its kind hospitality.

\end{document}